\numberwithin{equation}{section}
\renewcommand{\geq}{\geqslant}
\tikzstyle{Vertex}=[circle,draw=LimeGreen!80,fill=LimeGreen!8,
\tikzstyle{Node}=[Vertex,draw=RoyalBlue!80,fill=RoyalBlue!8,inner sep=1.5pt]
\tikzstyle{Leaf}=[rectangle,draw=Black!70,fill=Black!16,
\tikzstyle{Edge}=[Maroon!80,cap=round,line width=1pt]
\tikzstyle{Mark1}=[draw=BrickRed!80,fill=BrickRed!8]
\tikzstyle{Mark2}=[draw=BurntOrange!80,fill=BurntOrange!8]
\tikzstyle{EdgeRew}=[->,RedOrange!80,cap=round,thick]
\newcommand{\wcz}{\xrightarrow[]{\mathcal{D}}}
\newcommand{\wc}{\xrightarrow[n\to\infty]{\mathcal{D}}}
\newcommand{\pc}{\xrightarrow[n\to\infty]{\mathrm{P}}}
\newcommand{\ip}[1]{\langle #1\rangle}
\newcommand{\iid}{\mbox{i.i.d.\frenchspacing}}
\newcommand \R{\mathbb R}
\newcommand \Z{\mathbb Z}
\newcommand \abs[1]{\left|#1\right|}
\newcommand{\EE}{\mathsf{E}\,}
\newcommand{\dd}{\mathrm{d}\,}
\newcommand{\e}{\varepsilon}
\newcommand \ens[1]{\Big( #1\Big)}
\newcommand \PP{\mathsf P}
\newcommand \TT{\mathsf T}
\newcommand{\E}[1]{\mathsf E\left(#1\right)}
\newcommand{\prt}[1]{\left(#1\right)}
\newcommand{\norm}[1]{\left\lVert #1 \right\rVert}
\newcommand{\cF}{\mathcal{F}}
\newcommand{\cD}{\mathcal{D}}
\newcommand{\cB}{\mathcal{B}}
\newcommand{\Hide}[1]{}
\numberwithin{equation}{subsection}
\renewcommand{\geq}{\geqslant}
\newtheorem{Theorem}{Theorem}[section]
\newtheorem{Proposition}[Theorem]{Proposition}
\newtheorem{Lemma}[Theorem]{Lemma}
\theoremstyle{remark}
\newtheorem{Remark}[Theorem]{Remark}
\title[Weak invariance principle in Besov spaces]{Weak invariance principle in Besov spaces for stationary martingale differences}
\author{Davide \textsc{Giraudo} and Alfredas \textsc{Ra\v{c}kauskas} }
\address{ 
Normandie Univ, UNIROUEN, CNRS,
LMRS, 76000 Rouen, France, 
Department of Mathematics and Informatics,
Vilnius  University, Naugarduko 24, LT-03225 Vilnius, Lithuania, 
}\email{davide.giraudo1@univ-rouen.fr, alfredas.rackauskas@mif.vu.lt.}
\begin{document}

\begin{abstract} 
The classical Donsker weak invariance principle is extended to a Besov spaces
framework. Polygonal line processes  build from partial sums of stationary martingale 
differences as well independent and identically distributed 
random variables are considered. The results obtained are shown to be optimal.   
\end{abstract}
\maketitle

\textbf{Keywords:} invariance principle, martingale differences, stationary 
sequences, Besov spaces. 

\smallskip 

\textbf{AMS MSC 2010:} 60F17;  	60G10; 60G42.

\section{Introduction  and main results}

By weak invariance principle in a topological function space, say, $E$  we understand  the weak convergence of a sequence of probability measures induced on $E $ by  normalized polygonal line processes build from partial sums of random
variables.  The choice of the space $E$ is important due to possible statistical applications
via continuous mappings. Since stronger topology generates more continuous functionals, it is beneficial to have the weak invariance principle proved in as strong as possible topological framework. 

Classical Donsker's weak invariance principle considers the space $E=C[0, 1]$ and polygonal 
line processes build from partial sums of i.i.d. centred  random variables with finite 
second moment. An intensive research has been done in order to extend Donsker's result
to a stronger topological framework as well to a larger class of random 
variables (see, e.g., \cite{MR2206313}, \cite{Rackauskas-Suquet:2004a}, \cite{Giraudo2015} and 
references therein).

In this paper we consider weak invariance principle in Besov spaces  for a class of strictly 
stationary sequence of martingale differences. To be more precise, let us first introduce 
some notation and definitions used throughout the paper.

Let $\prt{\Omega, \cF, \PP}$ be a probability space and $\TT\colon \Omega\to\Omega$ be a 
bijective bi-measurable transformation preserving the probability $\PP$. The quadruple  $(\Omega, \cF, \PP, \mathsf{T})$ is referred to as  dynamical system (see, e.g., \cite{MR833286} for some background material).  We assume that
there is a sub-$\sigma$-algebra $\cF_0\subset \cF$ such that $\mathsf{T}\cF_0\subset \cF_0$ and
by $\mathcal I$ 
we denote  the $\sigma$-algebra of the sets $A\in \mathcal F$ such that $\mathsf{T}^{-1}A=A$. 

Next we  consider 
a strictly stationary sequence $\prt{X_{j},\ j\geqslant 0}$ constructed  
as  $X_j:=f\circ \TT^j$, where $f:\Omega\to \R$  
is $\cF_0$-measurable. We define also a non-decreasing filtration $\cF_k=\TT^{-k}\cF_0$, 
$k\geqslant 1$. Note that  $\prt{X_j,  \cF_j, j\geqslant 0}$ is then a martingale differences 
sequence provided $\E{f\mid \TT\mathcal \cF_0}=0$. 

Set 
$$
S_{f,0}:=0, \ S_{f,n}:=\sum_{j=0}^{n-1}f\circ\mathsf{T}^j
, \quad n\geqslant 1.
$$ 
Our main object of 
investigation is the sequence of polygonal line processes
$\zeta_{f, n}:=(\zeta_{f,n}(t), t\in [0, 1]),$ $n\geqslant  1$,
defined by 

$$
\zeta_{f, n}(t):=S_{f,\lfloor nt\rfloor}+ (nt-\lfloor nt\rfloor)f\circ \TT^{\lfloor nt\rfloor},
$$ 

where  for a real number $a \geqslant 0$, $\lfloor a\rfloor := \max\{k\colon k \in  
\{0, 1, \ldots \}, k \le a\}$.
To define the paths space under consideration let $L_p([0, 1])$ be the space of 
Lebesgue integrable functions with exponent $p$  ($1\leqslant  p < \infty$)  and  the
norm
$$
\lVert x\rVert_p=\prt{\int_0^1\abs{x(t)}^p\dd t}^{1/p},\quad x\in L_p([0, 1]).
$$
If $x\in L_p([0, 1])$  its $L_p$-modulus of smoothness is defined as
$$
\omega_p(x, \delta) = \sup_{\abs h\leqslant \delta}\left(\int_{I_h}
\abs{x(t + h) - x(t)}^p \dd t\right)^{1/p},\quad \delta\in [0, 1],
$$
where $I_h=[0, 1]\cap[-h, 1-h]$.
Let $\alpha\in [0, 1)$. The Besov space $B^o_{p, \alpha}=B^o_{p, \alpha}([0, 1])$ is 
defined by
$$
B^o_{p, \alpha}:=\left\{x\in L_p([0, 1]): \lim_{\delta\to 0}\delta^{-\alpha}\omega_p(x, \delta)
=0\right\}.
$$
Endowed with the norm
$$
\norm{x}_{p, \alpha}=\norm{x}_p+\sup_{\delta\in (0, 1)}\delta^{-\alpha}\omega_{p}(x, \delta),
\quad x\in B^o_{p, \alpha}, 
$$
the space $B^o_{p, \alpha}$ is  a separable Banach space and the following 
embeddings are continuous:
\begin{align*}
&B^o_{p, \alpha}\hookrightarrow B^o_{p, \beta},\quad \textrm{for}\ 0\le \beta<\alpha;\\
&B^o_{p, \alpha}\hookrightarrow B^o_{q, \alpha},\quad \textrm{for}\ 1\le q<p<\infty.
\end{align*}
Each $B^o_{p, \alpha}(0, 1)$ where $p\geqslant 1, 0\leqslant \alpha <1/2,$ supports a
standard
Wiener process $W=\prt{W(t), 0\leqslant t\leqslant 1}$ (see, e.g., \cite{MR1277166}). We 
note also, that any polygonal line process  belongs to each of $B^o_{p, \alpha}$, $p\ge 1$, 
$\alpha\in [0, 1)$. 

As usually $\wcz$ denotes convergence in distribution.
\begin{Theorem}\label{thm:WIP_Besov_martingales_1}
Let $p>2$, $1/p<\alpha<1/2$ and 
\begin{equation}\label{q_alpha_p}
q(p, \alpha):=1/(1/2-\alpha+1/p).
\end{equation}
Let $(f\circ \TT^i,\  \TT^{-i}\mathcal \cF_0,\ i\geqslant 0)$ be a martingale 
differences sequence.
Assume that the following two conditions hold~:
\begin{itemize}
\item[$(i)$]

$ \lim_{t\to\infty}t^{q(p, \alpha)} \PP\left\{\abs f\geqslant t\right\}=0;
$ 
\item[$(ii)$]
$ \E{\big[\E{f^2\mid \TT\cF_0}\big]^{q(p,\alpha)/2}}<\infty.
$
\end{itemize}
Then the convergence 
$
n^{-1/2}\zeta_{f, n}\wc  \sqrt{\E{f^2\mid\mathcal I}}
\ W$ holds in the space $B_{p,\alpha}^o,$
where  $W$ is independent of $\E{f^2\mid\mathcal I}$.
\end{Theorem}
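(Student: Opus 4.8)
The plan is to establish weak convergence in the separable Banach space $B^o_{p,\alpha}$ by the classical two-step scheme: convergence of the finite-dimensional distributions, which pins down the limit, together with tightness of the family $\big(n^{-1/2}\zeta_{f,n}\big)_{n\geq1}$. Since $\alpha>1/p$ forces $B^o_{p,\alpha}$ to consist of continuous functions, the coordinate maps $x\mapsto x(t)$ are continuous linear functionals, so these two ingredients indeed yield $n^{-1/2}\zeta_{f,n}\wc \sqrt{\E{f^2\mid\mathcal I}}\,W$ once the limit is identified.

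For the finite-dimensional distributions I would fix $0\leq t_1<\dots<t_m\leq1$ and apply the central limit theorem for strictly stationary martingale differences. Birkhoff's ergodic theorem gives $n^{-1}\sum_{j=0}^{\lfloor nt\rfloor-1}(f\circ\TT^j)^2\to t\,\E{f^2\mid\mathcal I}$ almost surely, which supplies the required convergence of the quadratic variation, while the Lindeberg condition follows from $\E{f^2}<\infty$ (a consequence of $(ii)$, as $q(p,\alpha)>2$). The resulting limit is the mixed Gaussian vector $\sqrt{\E{f^2\mid\mathcal I}}\,(W(t_1),\dots,W(t_m))$ with $W$ independent of $\E{f^2\mid\mathcal I}$, matching the statement; the linear-interpolation remainder $n^{-1/2}(nt-\lfloor nt\rfloor)f\circ\TT^{\lfloor nt\rfloor}$ is negligible in probability by stationarity. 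This step uses only the second-moment structure and is not where $(i)$ and $(ii)$ are felt in their sharp form.

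The core of the proof is tightness, which I would reduce to a sequence-space description of the norm. Using the Ciesielski-type (Faber--Schauder) characterization, the $B^o_{p,\alpha}$ seminorm of $x$ is equivalent to $\sup_{j\geq0}2^{j\alpha}\big(2^{-j}\sum_{k=0}^{2^j-1}\abs{b_{j,k}(x)}^p\big)^{1/p}$, where $b_{j,k}(x):=x\big(\tfrac{2k+1}{2^{j+1}}\big)-\tfrac12 x\big(\tfrac{k}{2^j}\big)-\tfrac12 x\big(\tfrac{k+1}{2^j}\big)$ is the dyadic second difference, and the membership condition $\delta^{-\alpha}\omega_p(x,\delta)\to0$ becomes the vanishing of the $j$-th block as $j\to\infty$. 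Tightness in $B^o_{p,\alpha}$ thus amounts to the uniform high-frequency estimate: for every $\lambda>0$, $\lim_{J\to\infty}\limsup_{n\to\infty}\PP\big(\sup_{j>J}2^{j\alpha}(2^{-j}\sum_k\abs{b_{j,k}(n^{-1/2}\zeta_{f,n})}^p)^{1/p}>\lambda\big)=0$, the low-frequency blocks and the $L_p$-norm being controlled by the finite-dimensional convergence. By a Markov and maximal bound this is settled once $\sum_{j>J}2^{jp\alpha}\,\E{\abs{b_{j,0}(n^{-1/2}\zeta_{f,n})}^p}$ is shown to be small uniformly in $n$, stationarity reducing the sum over $k$ to the single summand $b_{j,0}$.

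It is here that the two regimes and the sharp exponent appear, and this is the main obstacle. For the microscopic scales $2^{-j}\lesssim 1/n$ the piecewise-linear structure makes $b_{j,k}$ vanish except on the $O(n)$ dyadic intervals meeting a node $k/n$, where it is of size $\sim 2^{-j}\sqrt n\,\abs{f\circ\TT^k}$; summing shows the microscopic contribution is governed by $n^{\alpha-1/2-1/p}\big(\sum_{k=0}^{n-1}\abs{f\circ\TT^k}^p\big)^{1/p}$, hence by the largest jump $\max_{k<n}\abs{f\circ\TT^k}$ (the $\ell_p$ sum of a tail-index-$q$ sequence being of the order of its maximum when $q<p$), which condition $(i)$ forces to be $o_{\PP}(n^{1/q(p,\alpha)})$; the exponent $q(p,\alpha)=1/(1/2-\alpha+1/p)$ is exactly the value making $n^{\alpha-1/2-1/p+1/q}\to0$, and the little-$o$ in $(i)$ gives genuine vanishing, which is why the condition is sharp. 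For the macroscopic scales $2^{-j}\gtrsim 1/n$, where $b_{j,0}$ is a genuine increment of the martingale $S_{f,\cdot}$ over $\sim n2^{-j}$ terms, I would aim at $\E{\abs{b_{j,0}(n^{-1/2}\zeta_{f,n})}^p}\lesssim 2^{-jp/2}$ via a Burkholder--Rosenthal inequality. Since only the weak bound $(i)$ and the conditional-variance bound $(ii)$ are available rather than $\E{\abs f^p}<\infty$, the hard point is to run a truncation at a level depending on $(j,n)$ while preserving the martingale-difference structure: writing $f=\big(f\mathbf 1_{\abs f\leq\tau}-\E{f\mathbf 1_{\abs f\leq\tau}\mid \TT\cF_0}\big)+\big(f\mathbf 1_{\abs f>\tau}-\E{f\mathbf 1_{\abs f>\tau}\mid \TT\cF_0}\big)$ keeps both pieces martingale differences, the bounded piece being controlled through its conditional variance (condition $(ii)$ bounding the Rosenthal quadratic-variation term via Birkhoff sums of $\E{f^2\mid \TT\cF_0}$) and the large piece through the rarity of $\{\abs f>\tau\}$ (condition $(i)$). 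Choosing $\tau=\tau(j,n)$ optimally so that both contributions are summable against the weights $2^{jp\alpha}$, and checking that the little-$o$ refinements upgrade boundedness to vanishing, is the delicate calculation on which the whole tightness argument rests.
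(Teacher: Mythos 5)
Your overall architecture coincides with the paper's: identification of the limit through finite-dimensional/Schauder-coefficient convergence plus tightness (the paper's Lemma~\ref{remark_fdd_1}), reduction of tightness to the dyadic sequential-norm criterion (Theorem~\ref{thm:NSC_tightness} and its stationary form, Theorem~\ref{cor:tightness_stationary_sequences}), a law-of-large-numbers treatment of the scales $2^{-j}\lesssim 1/n$ using only condition $(i)$ (the paper's Lemma~\ref{lem:LLN}), and a truncation-plus-martingale-inequality treatment of the coarse scales. The gap is that the coarse-scale estimate --- which you yourself call ``the delicate calculation on which the whole tightness argument rests'' --- is never carried out, and it is precisely the mathematical content of the paper's Lemma~\ref{lemma_maximal_inequality} (a maximal inequality quoted from a companion paper) together with the $A(N,J)$/$B(N,J)$ computation that verifies \eqref{eq:tightness_stationary_sequences}. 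A proof that stops there has asserted the theorem's crux rather than proved it.

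Moreover, the specific route you propose for that step fails as stated. Write $q=q(p,\alpha)$ and $\sigma_0^2:=\E{f^2\mid\TT\cF_0}$. You want to bound $\sum_j 2^{jp\alpha}\E{\abs{b_{j,0}(n^{-1/2}\zeta_{f,n})}^p}$ by applying Rosenthal's inequality to the truncated differences $f'=f\mathbf 1_{\abs f\leqslant\tau}-\E{f\mathbf 1_{\abs f\leqslant\tau}\mid\TT\cF_0}$, controlling the quadratic-variation term through condition $(ii)$. But $(ii)$ only provides a moment of order $q/2<p/2$ for $\sigma_0^2$, so the Rosenthal term $\E{\big(\sum_{i\leqslant m}(\sigma_i')^2\big)^{p/2}}$ (with $m=n2^{-j}$ summands) must be interpolated through the truncation level: $\E{(\sigma_0')^{p}}\leqslant C\tau^{p-q}\E{\sigma_0^{q}}$, giving the weighted contribution $2^{jp\alpha}\,n^{-p/2}m^{p/2}\tau^{p-q}\,\E{\sigma_0^q}= C\,2^{-jp(1/2-\alpha)}\tau^{p-q}\E{\sigma_0^q}$. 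The large-jump piece forces $\tau=\tau(j,n)$ to be essentially of order $n^{1/q}$ at least (under $(i)$ alone, $n\PP(\abs f>\tau)\to 0$ requires it), so after summing over $j\geqslant J$ this contribution is of order $2^{-Jp(1/2-\alpha)}n^{(p-q)/q}$, which blows up with $n$ for every fixed $J$; no tuning of $\tau(j,n)$ repairs this, because lowering $\tau$ below $n^{1/q}$ destroys the union bound for the unbounded piece instead. The conditional-variance contribution must be estimated at exponent exactly $q(p,\alpha)$, matched to the dyadic weights via the identity exploited in the paper, $\sum_{j\geqslant J}2^j\PP\big(Y\geqslant 2^{j/q(p,\alpha)}\big)\leqslant 2\E{Y^{q(p,\alpha)}\mathbf 1\big(Y\geqslant 2^{J/q(p,\alpha)}\big)}$, and this is precisely what Lemma~\ref{lemma_maximal_inequality} is engineered to permit: its quadratic-variation term is a tail bound in $\sigma_0$ carrying no factor of the block length, so that the factor $2^j$ in the tightness criterion is exactly absorbed. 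This is where your moment-based sketch and a correct proof genuinely part ways, not merely a detail left to the reader.
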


Let us note that condition (i)  is stronger for the function $f$ than its square 
integrability since (i) yields $\E{|f|^r}<\infty$ for any $r<q(p, \alpha)$ 
and $q(p, \alpha)>2$ when $\alpha >1/p$.  However as
shows our next result, condition (i) is necessary and sufficient for independent 
identically distributed (\iid) random variables. To formulate the 
result let  $Y, Y_1, Y_2, \dots$ be mean zero \iid\  random 
variables with finite variance $\sigma^2=\E{Y^2}>0$. Let $\xi_n=(\xi_n(t), 
t\in [0, 1])$, be defined by
$$
\xi_n(t)=\sum_{k=1}^{\lfloor nt\rfloor}X_k+(nt-\lfloor nt\rfloor)
X_{\lfloor nt\rfloor +1}.
$$
\begin{Theorem}\label{thm:WIP_Besov_iid_1} 
Let $p>2, 1/p<\alpha<1/2$ and let $q(p, \alpha)$ be defined by \eqref{q_alpha_p}. 
Then
\begin{equation}\label{eq:WIP_Besov_iid_1}
n^{-1/2}\sigma^{-1}\xi_n\wc W\quad \textrm{in the space}\quad B^o_{p, \alpha}
\end{equation}
if and only if 
\begin{equation}\label{eq:tail_condition_iid}
\lim_{t\to\infty}t^{q(p, \alpha)} \PP\left\{\abs Y\geqslant t\right\}=0.
\end{equation}
\end{Theorem}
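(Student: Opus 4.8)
I would prove the two implications separately. The forward (sufficiency) direction is essentially a corollary of Theorem~\ref{thm:WIP_Besov_martingales_1}. A centred i.i.d.\ sequence is a strictly stationary martingale difference sequence, realised through the shift on a product space with $\cF_0$ the $\sigma$-algebra generated by the coordinates of nonpositive index; it thus suffices to verify hypotheses $(i)$ and $(ii)$ for $f=Y$. Hypothesis $(i)$ is exactly the tail condition \eqref{eq:tail_condition_iid}. Hypothesis $(ii)$ trivialises because, by independence, $\E{f^2\mid\TT\cF_0}=\E{Y^2}=\sigma^2$ is constant, so $\E{[\sigma^2]^{q(p,\alpha)/2}}=\sigma^{q(p,\alpha)}<\infty$. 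Since the i.i.d.\ shift is ergodic its invariant $\sigma$-algebra $\mathcal I$ is trivial and $\E{f^2\mid\mathcal I}=\sigma^2$, so Theorem~\ref{thm:WIP_Besov_martingales_1} gives $n^{-1/2}\xi_n\wc\sigma W$ in $B^o_{p,\alpha}$; dividing by $\sigma$ yields \eqref{eq:WIP_Besov_iid_1}.

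The real content is the converse. Assume \eqref{eq:WIP_Besov_iid_1} and set $\eta_n:=n^{-1/2}\sigma^{-1}\xi_n$. A weakly convergent sequence of laws on the separable Banach space $B^o_{p,\alpha}$ is tight, and compact subsets of $B^o_{p,\alpha}$ are characterised by boundedness together with the \emph{uniform} vanishing $\lim_{\delta\to0}\sup_{x\in K}\delta^{-\alpha}\omega_p(x,\delta)=0$. Hence for every $\e>0$ and $\e'>0$ there is a $\delta_0>0$ such that
\[
\sup_n\PP\left\{\sup_{0<\delta\leqslant\delta_0}\delta^{-\alpha}\omega_p(\eta_n,\delta)>\e\right\}\leqslant\e'.
\]
It is precisely this uniform smallness, which the little-o space $B^o_{p,\alpha}$ supplies but the plain Besov space does not, that will produce the \emph{little-o} in the tail.

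The link between the seminorm and the tails comes from a lower bound at the scale $\delta=1/n$. On $[(k-1)/n,k/n]$ the polygonal line $\eta_n$ has increment $n^{-1/2}\sigma^{-1}Y_k$, and a direct computation gives, for $t=(k-1+s)/n$ with $s\in[0,1)$, the shifted difference $\eta_n(t+1/n)-\eta_n(t)=n^{-1/2}\sigma^{-1}[(1-s)Y_k+sY_{k+1}]$. Using the elementary bound $\int_0^1\abs{(1-s)a+sb}^p\,\mathrm ds\geqslant c_p\max(\abs a,\abs b)^p$ and summing over $k$ yields, for some $c=c(p)>0$,
\[
\omega_p(\eta_n,1/n)\geqslant c\,\sigma^{-1}n^{-1/2-1/p}\max_{1\leqslant k\leqslant n}\abs{Y_k}.
\]
Since $\alpha-1/2-1/p=-1/q(p,\alpha)$, multiplying by $\delta^{-\alpha}=n^{\alpha}$ gives $n^{\alpha}\omega_p(\eta_n,1/n)\geqslant c\,\sigma^{-1}n^{-1/q(p,\alpha)}\max_{k\leqslant n}\abs{Y_k}$. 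Inserting this into the tightness estimate (valid once $1/n\leqslant\delta_0$) forces $n^{-1/q(p,\alpha)}\max_{1\leqslant k\leqslant n}\abs{Y_k}\to0$ in probability.

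It then remains to invoke the classical i.i.d.\ equivalence: $n^{-1/q}\max_{k\leqslant n}\abs{Y_k}\to0$ in probability if and only if $t^{q}\PP\{\abs Y\geqslant t\}\to0$ as $t\to\infty$, which follows from $\PP\{\max_{k\leqslant n}\abs{Y_k}\leqslant\e n^{1/q}\}=(1-\PP\{\abs Y>\e n^{1/q}\})^n$ together with the substitution $t=\e n^{1/q}$. This gives \eqref{eq:tail_condition_iid} and closes the argument. I expect the main obstacle to be the converse, and specifically the clean passage from weak convergence to the uniform modulus estimate: one has to justify the compactness characterisation of $B^o_{p,\alpha}$ and control the boundary region $I_h$ and the finitely many edge terms discarded in the summation over $k$, which I would absorb into the constant $c$.
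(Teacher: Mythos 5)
Your proof is correct, and its sufficiency half coincides with the paper's argument: the shift representation of the i.i.d.\ sequence, the observation that $\E{f^2\mid\TT\cF_0}=\sigma^2$ makes condition $(ii)$ of Theorem~\ref{thm:WIP_Besov_martingales_1} automatic, and triviality of $\mathcal I$. The necessity half, however, takes a genuinely different route. The paper also begins with tightness (on the Polish space $B^o_{p,\alpha}$), but then applies Theorem~1 of \cite{MR2483397} to the Faber--Schauder dyadic quantities: along the subsequence $2^n$, at the finest level $j=n$ the differences $W_{2^n}(r^+)-W_{2^n}(r)$, $r\in D_n$, reduce to single increments $2^{-n/2}Y_{2l-1}$, so that $2^{n(\alpha-1/p)}2^{-n/2}\big(\sum_{l=1}^{2^{n-1}}|Y_{2l-1}|^p\big)^{1/p}\to 0$ in probability, and the tail condition is then extracted from Gut's converse weak law of large numbers \cite{MR2091561}. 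You instead stay with the modulus $\omega_p$ itself: tightness plus the compactness characterisation (Proposition~\ref{Compact_sets_1}) gives uniform-in-$n$ smallness of $\sup_{0<\delta\leq\delta_0}\delta^{-\alpha}\omega_p(\eta_n,\delta)$; your pointwise inequality $\int_0^1|(1-s)a+sb|^p\,\mathrm ds\geq c_p\max(|a|,|b|)^p$ (valid with $c_p=1/(2(p+1))$, since $|(1-s)a+sb|\geq(1-2s)\max(|a|,|b|)$ on $[0,1/2]$ when $|a|$ is the larger) bounds the scale-$1/n$ modulus below by $c\,\sigma^{-1}n^{-1/q(p,\alpha)}\max_{k\leq n}|Y_k|$; and the elementary $(1-p_n)^n$ computation for i.i.d.\ maxima concludes. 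Your route is more self-contained: it needs neither the Schauder-coefficient machinery nor the two external citations, because the maximum of i.i.d.\ variables admits a one-line degenerate-limit characterisation, whereas the paper's sums of $p$-th powers require a converse weak law. What the paper's route buys is economy within its own framework: the sequential norm and dyadic differences are already the working tools of Sections~2--3, and only the subsequence $n=2^j$ is needed. The two points you flag as delicate are indeed harmless: the compactness characterisation is exactly Proposition~\ref{Compact_sets_1} of the paper, and no edge terms are actually discarded, since $\sum_{k=1}^{n-1}\max(|Y_k|,|Y_{k+1}|)^p\geq\max_{1\leq k\leq n}|Y_k|^p$ already suffices.
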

Since $B_{\infty, \alpha}^o$ matches H\"older spaces we see, that Theorems  \ref{thm:WIP_Besov_iid_1} 
and  \ref{thm:WIP_Besov_martingales_1} complement the weak invariance principle obtained 
in a H\"olderian framework by \cite{Rackauskas-Suquet:2004a} and \cite{Giraudo2015}. 
Concerning condition (ii) of Theorem \ref{thm:WIP_Besov_martingales_1} we prove a need of
certain extra assumption  by a counterexample which for any dynamical system with positive entropy 
constructs a function $f$ that satisfies the condition (i) but the convergence of polygonal line processes fails. Precise result reads as follows. 

\begin{Theorem}\label{thm:counter_example_martingales}
 Let $p>2$, $1/p<\alpha<1/2$ and $q(p,\alpha)$ be given by 
 \eqref{q_alpha_p}. For each dynamical system $\prt{\Omega,\mathcal F,\PP, \TT}$ with positive 
 entropy,  there exists a function $m\colon \Omega\to \R$ and a $\sigma$-algebra 
 $\mathcal F_0$ for which $\TT\mathcal F_0\subset\mathcal F_0$ such that:
 \begin{itemize}
  \item[$(i)$] the sequence $\prt{m\circ \TT^i,\ \TT^{-i}\mathcal F_0, i\geqslant 0}$ 
  is a martingale difference sequence; 
  \item[$(ii)$] the convergence $\lim_{t\to +\infty}t^{q(p,\alpha)} \PP 
  \ens{\abs m\geqslant t}=0$ takes place;
  \item[$(iii)$] the sequence $\prt{n^{-1/2}\zeta_{m, n}}$
  is not  tight in $B^o_{p,\alpha}$.
 \end{itemize}
\end{Theorem}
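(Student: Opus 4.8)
The plan is to use the positive entropy to synthesise enough independence to build the martingale difference by hand, and then to defeat the equi-small-oscillation characterisation of compact subsets of $B^o_{p,\alpha}$. First I would reduce the statement to the choice of a single $\mathcal F_0$-measurable function. A system of positive entropy admits, by Sinai's factor theorem, every Bernoulli shift of smaller entropy as a factor, so one can find a finite-valued $g\colon\Omega\to\R$ for which $\prt{g\circ\TT^i}_{i\in\Z}$ is \iid. Splitting this source I would produce on $\Omega$ a sequence of independent centred innovations together with a past-measurable family of volatilities, and take for $\mathcal F_0$ the $\sigma$-algebra generated by those indexed by the non-positive times, so that $\TT\mathcal F_0\subset\mathcal F_0$. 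Setting $m=\sigma\,\varepsilon$ with $\varepsilon$ independent of $\TT\mathcal F_0$ and $\sigma$ being $\TT\mathcal F_0$-measurable gives $\E{m\mid\TT\mathcal F_0}=0$, so that $\prt{m\circ\TT^i,\TT^{-i}\mathcal F_0,i\geq0}$ is a martingale difference sequence, which is $(i)$, and $\E{m^2\mid\TT\mathcal F_0}=\sigma^2\,\E{\varepsilon^2}$.

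The heart of the construction is the law of $\sigma$, which I would fix at the critical exponent with a slowly varying correction, say $\PP\ens{\sigma>s}\asymp s^{-q(p,\alpha)}/\log s$. A direct tail computation then yields $(ii)$, namely $\lim_{t\to\infty}t^{q(p,\alpha)}\PP\ens{\abs m\geq t}\asymp 1/\log t=0$, while at the same time $\E{\ens{\E{m^2\mid\TT\mathcal F_0}}^{q(p,\alpha)/2}}\asymp\int^\infty\mathrm ds/(s\log s)=\infty$, so that hypothesis $(ii)$ of Theorem \ref{thm:WIP_Besov_martingales_1} is violated. The logarithmic factor is forced: it is exactly what keeps the tail of $\abs m$ strictly lighter than $t^{-q(p,\alpha)}$, and it is exactly what will have to be overcome in proving $(iii)$.

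To disprove tightness I would contradict the compactness criterion in $B^o_{p,\alpha}$, whereby a bounded family is relatively compact only if $\delta^{-\alpha}\omega_p(x,\delta)\to0$ uniformly over the family as $\delta\to0$. For the polygonal line the increment of $n^{-1/2}\zeta_{m,n}$ over an interval of length $\delta$ is $n^{-1/2}$ times a sum of about $n\delta$ consecutive martingale differences; using $1/q(p,\alpha)=1/2-\alpha+1/p$, so that $\alpha-1/p-1/2=-1/q(p,\alpha)$, a run of $L=n\delta$ coordinates carrying conditional variance of order $v$ contributes to $\sup_{\delta}\delta^{-\alpha}\omega_p\ens{n^{-1/2}\zeta_{m,n},\delta}$ a term of order $\prt{L/n}^{1/q(p,\alpha)}\sqrt v$. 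The aim is then to exhibit a scale $\delta_n\to0$ and an event of probability bounded away from $0$ on which this quantity stays above a fixed $\eta>0$, which forbids the uniform little-$o$ and gives non-tightness.

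The main obstacle is precisely this lower bound, because hypothesis $(ii)$ of the present statement is sharp. A union bound shows that an isolated increment of size $\eta\,n^{1/q(p,\alpha)}$ occurs with probability at most $n\,\PP\ens{\abs m\geq\eta\,n^{1/q(p,\alpha)}}\to0$, so no single coordinate can defeat tightness; the same logarithmic margin makes the largest conditional variance among $n$ coordinates fall just short of the threshold $n^{2/q(p,\alpha)}$, and it drags the oscillation produced at any fixed scale, or by an independent or short-run clustering of the volatilities, down to the harmless size $(\log n)^{-1/q(p,\alpha)}$. Non-tightness must therefore be extracted from a genuinely long-range-dependent arrangement of the $\sigma$'s—for instance a heavy-tailed excursion structure, manufactured from the independent innovations furnished by the entropy, that with non-vanishing probability creates an anomalously long run of elevated conditional variance at a scale $\delta_n\to0$, ultimately showing that the Besov norms of $n^{-1/2}\zeta_{m,n}$ are themselves not tight. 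Tuning the run-length law so that the surviving oscillation is of order one rather than $o(1)$, while respecting stationarity, the measurability $\TT\mathcal F_0\subset\mathcal F_0$, and the tail bound $(ii)$, and controlling the martingale block sums on the relevant event rather than merely in mean, is where the real work lies.
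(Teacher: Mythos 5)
Your high-level architecture does match the paper's: the paper also extracts from positive entropy (via Lemma~\ref{lem:positive_entropy}, due to Lesigne and Voln\'y) an i.i.d.\ sign sequence $g$ measurable with respect to a $\TT$-invariant $\sigma$-algebra $\mathcal B$, independent of an aperiodic factor $\mathcal C$, and sets $m=g\cdot f$ with a ``volatility'' $f$ that is $\mathcal C$-measurable and $\mathcal F_0=\sigma\prt{g\circ \TT^i, i\leqslant 0}\vee\mathcal C$; non-tightness is likewise reduced (Lemma~\ref{lem:non_tightness}) to showing that $\delta^{-\alpha}\omega_p$ fails to be uniformly small in probability, and your scaling heuristic $\prt{L/n}^{1/q(p,\alpha)}\sqrt v$ for the contribution of a run of elevated variance is correct. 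But the decisive content of the proof is exactly what your last sentence defers as ``where the real work lies,'' so the proposal has a genuine gap rather than a complete argument. First, your concrete suggestion---a stationary volatility with marginal tail $\PP\ens{\sigma>s}\asymp s^{-q(p,\alpha)}/\log s$---proves nothing by itself: it only violates hypothesis $(ii)$ of Theorem~\ref{thm:WIP_Besov_martingales_1}, which is a \emph{sufficient} condition for convergence, and as you yourself observe, no i.i.d.-like or fixed-scale arrangement of such $\sigma$'s can destroy tightness. The paper does not prescribe a marginal law at all; it builds $f=\sum_l f_l$ where each $f_l$ is a \emph{deterministic staircase} over a Rokhlin tower of height $N_l$ inside the aperiodic factor $\mathcal C$. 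This is the device that realizes your ``anomalously long run of elevated conditional variance'': because the staircase is swept by the dynamics, a macroscopic fraction ($\geqslant 1/2$) of starting points sees the full profile within the observation window, so the run occurs with probability bounded below, while stationarity and $\TT\mathcal F_0\subset\mathcal F_0$ are automatic.

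Second, even granted the run, the tail constraint $(ii)$ forces the staircase weights to carry factors $1/L_l$ with $L_l\to\infty$, so the event that the i.i.d.\ walk $S_g$ exhibits a large scan value on any \emph{single} block has probability $\approx\PP\ens{Y_{1/2,1}>2L_l}\to 0$. The paper overcomes this by stacking $J_l$ blocks of distinct dyadic lengths $2^{I_l+j}$, $1\leqslant j\leqslant J_l$, inside one tower, with $J_l$ calibrated by \eqref{eq:constrainJl} so that $\sum_j\PP\prt{A_{l,j}}\approx 7/8$; the events $A_{l,j}$ involve disjoint sets of the $g$-coordinates, hence are independent, and a Bonferroni lower bound (Lemma~\ref{lem:Bonferroni}) gives $\PP\big(\bigcup_j A_{l,j}\big)\geqslant 1/4$ uniformly in $l$. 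Computing $\PP\prt{A_{l,j}}$ at all requires a limit theorem for the scan functional of the i.i.d.\ walk (Proposition~\ref{prop:example_of_functional}, i.e.\ the Besov invariance principle applied to the bounded i.i.d.\ sequence $g$, used through \eqref{eq:choice_of_Il}), and combining the $\mathcal B$-measurable walk events with the $\mathcal C$-measurable tower event uses precisely the independence of $\mathcal B$ and $\mathcal C$; finally the interference between towers at different scales $l$ must be killed by lacunarity conditions \eqref{eq:first_lacunarity_condition_Nl} and \eqref{eq:second_lacunarity_condition_Nl}. None of these steps---the tower staircase, the multi-scale union with the Bonferroni bound, the calibration via the i.i.d.\ limit theorem, and the cross-scale control---appears in your proposal, and without them the probability of your ``bad event'' cannot be bounded away from zero under constraint $(ii)$.
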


As it is seen from our next results the case where $0\le \alpha\le 1/p$ is 
indeed quite different from the previously considered case  where $1/p<\alpha<1/2$.
 
\begin{Theorem}\label{thm:WIP_Besov_martingales_2}
 Let $p\geqslant 1$ and $\alpha\in [0, 1/2)\cap[0, 1/p]$.    Let $\prt{f\circ \TT^i,\ \TT^{-i}\mathcal F_0, i\geqslant 0}$  be  a martingale 
differences sequence. If  $\EE{f^2}<\infty$
then
$n^{-1/2}\zeta_{f,n}  \wc  \sqrt{\E{f^2|\mathcal{I}}} W$ in the space
$B_{p,\alpha}^o$, where  $W$ is independent of $\E{f^2\mid\mathcal I}$.
\end{Theorem}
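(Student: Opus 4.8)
The plan is to combine convergence of the finite-dimensional distributions with tightness in $B^o_{p,\alpha}$, after first reducing to the range $p\ge 2$. When $1\le p<2$ the constraint $\alpha\le 1/p$ is not binding (one only has $\alpha<1/2$), so I would prove the statement in $B^o_{2,\alpha}$ and transfer it to $B^o_{p,\alpha}$ through the continuous embedding $B^o_{2,\alpha}\hookrightarrow B^o_{p,\alpha}$ recorded in the introduction. The case $\alpha=0$ is immediate, since $B^o_{p,0}$ reduces to $L_p$ and the conclusion follows from Donsker's theorem together with $C[0,1]\hookrightarrow L_p$. Hence I focus on $p\ge 2$ and $0<\alpha\le 1/p$ (for $p=2$ the admissible range is $0<\alpha<1/2=1/p$).

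For the finite-dimensional distributions I would invoke the central limit theorem for strictly stationary martingale differences. Since $\mathsf E(f^2)<\infty$, Birkhoff's theorem gives $n^{-1}\sum_{j<n}(f\circ\TT^j)^2\asc \mathsf E(f^2\mid\mathcal I)=:\eta$, and the stationary (stable) martingale CLT then yields $\big(n^{-1/2}\zeta_{f,n}(t_1),\dots,n^{-1/2}\zeta_{f,n}(t_\ell)\big)\wcz\big(\sqrt{\eta}\,W(t_1),\dots,\sqrt{\eta}\,W(t_\ell)\big)$, with $W$ a standard Brownian motion independent of $\eta$. This step uses only the second moment.

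The heart of the proof is tightness, which I would treat through the sequential (Ciesielski / Faber--Schauder) description of $B^o_{p,\alpha}$: writing $\lambda_{j,k}(x)$ for the second-order increment of $x$ across the dyadic interval of length $2^{-j}$ at position $k$, one has $\norm{x}_{p,\alpha}\asymp|x(0)|+|x(1)|+\sup_{j\ge 0}\beta_j(x)$ with $\beta_j(x):=2^{j(\alpha-1/p)}\big(\sum_{k}|\lambda_{j,k}(x)|^p\big)^{1/p}$, membership in the little-o space being equivalent to $\beta_j(x)\to 0$. Tightness of $(n^{-1/2}\zeta_{f,n})$ then amounts to $\lim_{J\to\infty}\limsup_n\PP\big(\sup_{j\ge J}\beta_j(n^{-1/2}\zeta_{f,n})>\e\big)=0$ for every $\e>0$, together with boundedness in the full norm (finitely many small $n$ being handled individually). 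I split the scales at the mesh $1/n$. At \emph{fine} scales $2^{-j}\le 1/n$ the polygonal line is affine except across its at most $n$ nodes, so $\lambda_{j,k}\neq 0$ for at most $n$ indices, each of size $O(2^{-j}n^{1/2}|f\circ\TT^i|)$; bounding the $\ell^p$-sum by the $\ell^2$-sum (licit since $p\ge 2$) and invoking Birkhoff yields $\sup_{2^{-j}\le 1/n}\beta_j=O_{\PP}(n^{\alpha-1/p})$, which is $O_{\PP}(1)$ and is $o_{\PP}(1)$ when $\alpha<1/p$. Here the hypothesis $\alpha\le 1/p$ is precisely what keeps this exponent nonpositive.

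At \emph{coarse} scales $2^{-j}\ge 1/n$ the coefficient $\lambda_{j,k}$ equals, up to affine endpoint corrections, $n^{-1/2}$ times the difference of two adjacent block sums of $\approx 2^{-j}n$ martingale differences, with $\mathsf E|\lambda_{j,k}|^2\approx 2^{-j}\mathsf E(f^2)$. When $\alpha<1/p$ I would again pass to the $\ell^2$-sum, obtaining $\mathsf E\beta_j^2\lesssim 2^{2j(\alpha-1/p)}$, a summable sequence, so that $\mathsf E\big[\sup_{j\ge J}\beta_j^2\big]\to 0$ as $J\to\infty$. The delicate point, and the main obstacle, is the critical case $\alpha=1/p$ (which occurs only for $p>2$, where $q(p,\alpha)=2$): there the $\ell^p$-into-$\ell^2$ bound discards the decay carried by the genuine $\ell^p$-norm, while a direct Burkholder--Rosenthal bound on the $p$-th moments of the block sums would require $\mathsf E|f|^p$, which is unavailable. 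I would resolve this by a truncation $f=f\mathds{1}_{\{|f|\le\tau\}}+f\mathds{1}_{\{|f|>\tau\}}$, re-centred to preserve the martingale structure: the bounded part is handled by the $p$-th moment inequality, producing the favourable factor $2^{j(1-p/2)}\to 0$, while the tail part is controlled in $L^2$ only, its contribution being made uniformly small by first letting $\tau\to\infty$ via $\mathsf E\big(f^2\mathds{1}_{\{|f|>\tau\}}\big)\to 0$. This is exactly where $\mathsf E(f^2)<\infty$, rather than any higher moment, turns out to suffice. Assembling the finite-dimensional convergence with tightness gives $n^{-1/2}\zeta_{f,n}\wc\sqrt{\eta}\,W$ in $B^o_{p,\alpha}$.
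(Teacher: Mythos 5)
Your route is genuinely different from the paper's, and it breaks at two points, one foundational and one quantitative. The foundational problem is that the entire dyadic framework you use for tightness --- the equivalence $\norm{x}_{p,\alpha}\asymp|x(0)|+|x(1)|+\sup_{j}\beta_j(x)$, the basis property of the Faber--Schauder system, and the reduction of tightness to $\lim_{J\to\infty}\limsup_n\PP\bigl(\sup_{j\geq J}\beta_j>\e\bigr)=0$ --- is only valid for $\alpha>1/p$ (this is the hypothesis of the Ciesielski--Kerkyacharian--Roynette theorem and of Suquet's criterion, Theorem~\ref{thm:NSC_tightness}), whereas the present theorem lives precisely in the complementary range $\alpha\leq 1/p$. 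There the space contains discontinuous $L_p$-classes, point evaluations (hence the coefficients $\lambda_{j,k}$) are not bounded functionals, and the equivalence is genuinely false: a cluster of $2^{j_1-1}$ polygonal spikes of height $1$ and width $2^{-j_2}$ placed at the points of $\cD_{j_1}$ has $\sup_j\beta_j\geq c\,2^{j_1\alpha}$ but Besov norm of order $2^{j_1/p}2^{j_2(\alpha-1/p)}$, so the ratio blows up as $j_2\to\infty$ when $\alpha<1/p$. A one-sided substitute (dyadic tail smallness of continuous processes implies Besov compactness) could conceivably be proved for $0<\alpha\leq 1/p$, but you assert it rather than prove it; the paper avoids the issue entirely by working with the modulus-of-smoothness criterion (Theorem~\ref{thm:necessary_and_sufficient_condition_for_tightness}) and the decomposition $\omega_p(W_{f,n},\delta)\leq c_p[U_{f,n}(\delta)+V_{f,n}(\delta)]$. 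The same obstruction invalidates your identification step: finite-dimensional distributions are not continuous functionals on $B^o_{p,\alpha}$ when $\alpha\leq 1/p$, which is why the paper's Lemma~\ref{remark_fdd_2} identifies the limit through Franklin coefficients and $L_p$-convergence (inherited from the $C[0,1]$ invariance principle) instead of through fdd's.

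The quantitative problem is the critical case $\alpha=1/p$ (part of the statement for every $p>2$), which you correctly single out as the main obstacle but do not actually resolve. Your tail part $f\mathbf 1_{\{|f|>\tau\}}$ is controlled only in $L^2$, so its per-scale bound $\E\beta_j^2\lesssim\E\bigl(f^2\mathbf 1_{\{|f|>\tau\}}\bigr)$ carries no decay in $j$ at $\alpha=1/p$; since there are about $\log_2 n$ coarse scales, any union bound or summation over $J\leq j\leq\log_2 n$ produces a factor $\log n$, and the resulting bound $(\log n)\,\E\bigl(f^2\mathbf 1_{\{|f|>\tau\}}\bigr)$ diverges as $n\to\infty$ for every fixed $\tau$: letting $\tau\to\infty$ first cannot repair a bound that is not uniform in $n$. (Your fine-scale estimate is likewise only $O_{\PP}(1)$ at $\alpha=1/p$, while the little-$o$ space demands smallness of $\sup_{j\geq J}\beta_j$.) The paper's mechanism at the critical exponent is the missing idea and is structurally different: on the complement of the event $B_{n,\delta,\tau}$ it interpolates the $\ell^p$-sum of block increments by $(\max\text{ increment})^{p-2}$ times the $\ell^2$-sum, controls the $\ell^2$-part by stationarity and Doob's inequality using only $\E{f^2}$, and controls the maximal increment by tightness of the polygonal process in $C[0,1]$ (classical martingale Donsker), the arbitrary parameter $\tau$ supplying the smallness. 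Because the modulus formulation packages all scales $\leq\delta$ into the single quantity $\max_{1\leq\ell\leq\lfloor n\delta\rfloor}$, it never has to sum over the $\log_2 n$ dyadic levels --- exactly the step where your scheme loses.
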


\begin{Theorem}\label{thm:WIP_Besov_iid_2}  
Let  $\alpha$ and $p$ be as in Theorem \ref{thm:WIP_Besov_martingales_2}. 
Then
\begin{equation}\label{eq:WIP_Besov_iid_2}
n^{-1/2}\sigma^{-1}\xi_{n}\wc W\quad \textrm{in the space}\quad B^o_{p, \alpha}.
\end{equation}
\end{Theorem}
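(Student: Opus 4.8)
The plan is to obtain Theorem~\ref{thm:WIP_Besov_iid_2} as a corollary of Theorem~\ref{thm:WIP_Besov_martingales_2}, the point being that a centred \iid\ sequence with finite variance is a particular instance of a stationary martingale difference sequence, and that the invariant $\sigma$-algebra attached to it is degenerate. Concretely, I would realise the sequence inside the canonical Bernoulli system: take $\Omega=\R^{\Z}$ equipped with the product law $\mu^{\otimes\Z}$, where $\mu$ is the common distribution of $Y$, let $\TT$ be the measure preserving, bi-measurable left shift $(\TT\omega)_k=\omega_{k+1}$, and let $f(\omega)=\omega_0$. Then $X_j=f\circ\TT^j=\omega_j$ reproduces the given \iid\ sequence, and, up to the harmless index shift relating $(\omega_j)_{j\geqslant 0}$ to the sequence indexing $\xi_n$, the polygonal line process $\zeta_{f,n}$ has the same distribution as $\xi_n$. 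Since weak convergence is a statement about laws only, it suffices to prove the convergence for $\zeta_{f,n}$.

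Next I would take the filtration generated by the past, $\cF_0=\sigma(\omega_i\colon i\leqslant 0)$. A direct computation using $\TT\sigma(g)=\sigma(g\circ\TT^{-1})$ gives $\TT\cF_0=\sigma(\omega_i\colon i\leqslant -1)\subset\cF_0$, so the standing assumption $\TT\cF_0\subset\cF_0$ holds and the associated filtration $\cF_k=\TT^{-k}\cF_0$ is non-decreasing; moreover the martingale difference condition $\E{f\mid\TT\cF_0}=0$ follows at once because $\omega_0$ is centred and independent of $(\omega_i)_{i\leqslant -1}$. As $\E{f^2}=\sigma^2<\infty$, the hypotheses of Theorem~\ref{thm:WIP_Besov_martingales_2} are met for every $p\geqslant 1$ and $\alpha\in[0,1/2)\cap[0,1/p]$.

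The only substantive point is to identify the limiting variance as deterministic. Here I would invoke the ergodicity of the shift on a product space (indeed its mixing property, via tail triviality): the invariant $\sigma$-algebra $\mathcal I$ is $\PP$-trivial, whence $\E{f^2\mid\mathcal I}=\E{f^2}=\sigma^2$ almost surely, and independence of $W$ from a constant is automatic. Theorem~\ref{thm:WIP_Besov_martingales_2} then yields $n^{-1/2}\zeta_{f,n}\wc\sqrt{\sigma^2}\,W=\sigma W$ in $B^o_{p,\alpha}$, and dividing by $\sigma$ together with $\zeta_{f,n}\stackrel{d}{=}\xi_n$ gives exactly \eqref{eq:WIP_Besov_iid_2}. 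I do not expect any genuine obstacle: all the analytic content, namely tightness in $B^o_{p,\alpha}$ and identification of the Gaussian limit, is already carried by Theorem~\ref{thm:WIP_Besov_martingales_2}, so the work reduces to the bookkeeping of casting the \iid\ model into the dynamical-system formalism and checking that $\mathcal I$ is trivial.
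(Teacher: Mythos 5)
Your proposal is correct and is essentially identical to the paper's own argument: the paper proves Theorem~\ref{thm:WIP_Besov_iid_2} by invoking Theorem~\ref{thm:WIP_Besov_martingales_2} together with the same construction used for Theorem~\ref{thm:WIP_Besov_iid_1}, namely realising the \iid\ sequence in the canonical Bernoulli shift $\Omega=\R^{\Z}$ with $f(\omega)=\omega_0$, $\cF_0=\sigma\prt{f\circ\TT^j,\ j\leqslant 0}$, checking $\TT\cF_0\subset\cF_0$ and $\E{f\mid\TT\cF_0}=0$ by independence, and using triviality of $\mathcal I$ to get $\E{f^2\mid\mathcal I}=\sigma^2$, hence $n^{-1/2}\zeta_{f,n}\wc\sigma W$ in $B^o_{p,\alpha}$. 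Your additional remarks (the harmless index shift between $\zeta_{f,n}$ and $\xi_n$, and tail triviality as the reason $\mathcal I$ is degenerate) are exactly the bookkeeping the paper leaves implicit.
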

Let us note that the finiteness of the second moment $\EE Y^2$ is necessary for the convergence (\ref{eq:WIP_Besov_iid_2}).

%
%
%
%

The rest of the paper is organized as follows. In 
Section~\ref{sec:functional_analysis_prob_tools}
we shortly present needed 
information on structure of  Besov spaces and tightness of measures on these 
spaces. Section~\ref{sec:proof_alpha_>_1/p} contains proofs of 
Theorems~\ref{thm:WIP_Besov_martingales_1}, \ref{thm:WIP_Besov_iid_1} 
and \ref{thm:counter_example_martingales}  whereas Section~\ref{sec:proof_alpha_leq_1/p} 
is devoted to the 
proofs of Theorems \ref{thm:WIP_Besov_martingales_2} and \ref{thm:WIP_Besov_iid_2}. {{Finally, in Section 6 we discuss possible applications of invariance principle in the Besov framework.  }}

\section{Some functional analysis and probabilistic tools}
\label{sec:functional_analysis_prob_tools}

We denote by $\cD_j$ the set of dyadic numbers in
$[0,1]$ of level $j$, i.e.
$$
\cD_0=\{0,1\},\qquad \cD_j=
\bigl\{(2l-1)2^{-j};\;1\leqslant  l \leqslant 2^{j-1}\bigr\},\quad j\geq 1.
$$
Set
$$
\cD=\bigcup_{j\geqslant 0}\cD_j
$$
and  write for $r\in \cD_j$, 
$$
r^-:=r-2^{-j},\quad r^+:=r+2^{-j}.
$$
The triangular Faber-Schauder functions $\Lambda_r$ for $r\in \cD_j$, $j>0$, are 
$$
\Lambda_r(t)=\left\{
\begin{array}{ll}
2^j(t-r^-)    & \text{if $t\in (r^-,r]$;}\\
2^j(r^+ -t) & \text{if $t\in (r,r^+]$;}\\
0             & \text{else.}
\end{array}
\right.
$$
When $j=0$, we just take the restriction to $[0,1]$ in
the above formula, so
$$
\Lambda_0(t)=1-t,\quad \Lambda_1(t)=t,\quad t\in[0,1].
$$
\begin{Theorem}[\cite{MR1244574}] Let $p>1$ and $1/p<\alpha<1.$ The Faber-Schauder system $\{\Lambda_r, r\in D\}$ is the Schauder basis for $B^o_{p, \alpha}$:  
each $x\in B^0_{p, \alpha}$ has the unique representation,
$$
x=\sum_{r\in \cD}\lambda_r(x)\Lambda_r,
$$
where
$$
\lambda_r(x):=x(r)-\frac{x(r^+)+x(r^-)}{2},\quad r\in D_j,\;j\geq 1
$$
and in the special case $j=0$, 
$$
\lambda_0(x):=x(0),\quad \lambda_1(x):=x(1).
$$
Moreover the norm is equivalent to the sequential norm:
$$
\norm{x}_{p, \alpha}\sim \norm{x}_{p,\alpha}^{\mathrm{seq}}:=\sup_{j\geqslant 0}2^{j\alpha-i/p}
\Big(\sum_{r\in D_j}|\lambda_{r}(x)|^p\Big)^{1/p}.
$$
\end{Theorem}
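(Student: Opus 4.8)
The plan is to establish the two claims — the biorthogonal expansion (with uniqueness) and the equivalence $\norm{\cdot}_{p,\alpha}\sim\norm{\cdot}_{p,\alpha}^{\mathrm{seq}}$ — together, the Schauder basis property following once the equivalence and convergence of the partial sums are in hand. First I would fix the framework. Because $\alpha>1/p$, the space $B^o_{p,\alpha}$ embeds continuously into $C[0,1]$ (the Besov form of the Sobolev embedding, and the only place the hypothesis $\alpha>1/p$ is genuinely used), so every point evaluation is a bounded functional and the coefficients $\lambda_r(x)$ are well defined and continuous. I would then record the interpolation identity: denoting by $\Pi_J$ the piecewise-linear interpolation at the grid $\{k2^{-J}:0\leqslant k\leqslant 2^J\}$, one proves by induction on $J$ that $\sum_{j=0}^J\sum_{r\in D_j}\lambda_r(x)\Lambda_r=\Pi_J x$, the inductive step being the telescoping relation $\Pi_J x-\Pi_{J-1}x=\sum_{r\in D_J}\lambda_r(x)\Lambda_r$; indeed at each new node $r\in D_J$ the two interpolants differ by exactly $\Lambda_r$ scaled by $x(r)-\tfrac12(x(r^-)+x(r^+))=\lambda_r(x)$. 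Writing $g_j:=\sum_{r\in D_j}\lambda_r(x)\Lambda_r=\Pi_jx-\Pi_{j-1}x$ and using that the hats of a fixed level have disjoint interiors with $\norm{\Lambda_r}_p\asymp 2^{-j/p}$, one gets $\norm{g_j}_p\asymp 2^{-j/p}\bigl(\sum_{r\in D_j}\abs{\lambda_r(x)}^p\bigr)^{1/p}$, so the sequential norm is, up to constants, $\sup_j 2^{j\alpha}\norm{g_j}_p$.

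With this reduction the forward inequality $\norm{\cdot}_{p,\alpha}^{\mathrm{seq}}\leqslant C\norm{\cdot}_{p,\alpha}$ amounts to the single block estimate $\norm{g_j}_p\leqslant C\,\omega_p(x,2^{-j})$. Since $g_j=(\Pi_j x-x)+(x-\Pi_{j-1}x)$, this follows from the interpolation-error bound $\norm{x-\Pi_j x}_p\leqslant C\,\omega_p(x,2^{-j})$ together with the doubling property $\omega_p(x,2\delta)\leqslant 2\,\omega_p(x,\delta)$. Establishing that error bound is the step I expect to be the main obstacle: it is a Whitney-type local estimate that must convert the point-value data defining $\Pi_j$ into a genuinely $L_p$ quantity. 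I would prove it interval by interval — on each $[kh,(k+1)h]$, $h=2^{-j}$, the interpolation error vanishes at the endpoints and is controlled by the local $L_p$ second-order oscillation, itself bounded by $\omega_p(x,h)$ on a neighbourhood — and then sum the $p$-th powers over $k$; continuity ($\alpha>1/p$) is what makes the endpoint values, and hence $\Pi_j x$, meaningful. Multiplying through by $2^{j\alpha}$ and taking the supremum yields $\norm{x}_{p,\alpha}^{\mathrm{seq}}\leqslant C\norm{x}_{p,\alpha}$.

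For the reverse inequality I would start from $x=\sum_j g_j$ and estimate $\omega_p(x,\delta)$ by splitting at the level $j_0$ with $2^{-j_0}\leqslant\delta<2^{-j_0+1}$. For the coarse scales $j\leqslant j_0$ a Bernstein-type bound $\norm{\Delta_h g_j}_p\leqslant\abs h\,\norm{g_j'}_p\leqslant C\,\delta\,2^{j}\norm{g_j}_p$ (using $\abs{\Lambda_r'}=2^{j}$ a.e.) applies, while for the fine scales $j>j_0$ the crude bound $\omega_p(g_j,\delta)\leqslant 2\norm{g_j}_p$ suffices; inserting $\norm{g_j}_p\leqslant C\,2^{-j\alpha}\norm{x}_{p,\alpha}^{\mathrm{seq}}$ and summing the two geometric series — convergent precisely because $0<\alpha<1$ — gives $\omega_p(x,\delta)\leqslant C\,\delta^{\alpha}\norm{x}_{p,\alpha}^{\mathrm{seq}}$, and the $L_p$-term is immediate, so $\norm{x}_{p,\alpha}\leqslant C\norm{x}_{p,\alpha}^{\mathrm{seq}}$. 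The same splitting shows that the defining little-$o$ condition of $B^o_{p,\alpha}$ forces $2^{j\alpha}\norm{g_j}_p\to0$, whence the tails $\sum_{j>J}g_j$ vanish in $\norm{\cdot}_{p,\alpha}$ and the partial sums $\Pi_J x$ converge to $x$; thus $\{\Lambda_r\}$ has dense span. Finally, uniqueness of the coefficients, and with it the Schauder basis property, follows from biorthogonality: applying the continuous functional $\lambda_s$ to any convergent expansion $\sum_r c_r\Lambda_r=x$ and using $\lambda_s(\Lambda_r)=\delta_{s,r}$ recovers $c_s=\lambda_s(x)$.
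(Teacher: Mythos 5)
First, a remark on the comparison itself: the paper does not prove this statement at all --- it is quoted from Ciesielski--Kerkyacharian--Roynette \cite{MR1244574} --- so your proposal can only be judged on its own merits. Its skeleton is the standard one, and most of it is sound: the telescoping identity $\sum_{j\leqslant J}\sum_{r\in D_j}\lambda_r(x)\Lambda_r=\Pi_Jx$, the block computation $\norm{g_j}_p\asymp 2^{-j/p}\bigl(\sum_{r\in D_j}\abs{\lambda_r(x)}^p\bigr)^{1/p}$, the Bernstein-plus-splitting proof of $\norm{x}_{p,\alpha}\leqslant C\norm{x}_{p,\alpha}^{\mathrm{seq}}$, and the biorthogonality/density argument for the basis property are all fine. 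The genuine gap sits exactly at the step you single out as the main obstacle: the Whitney-type bound $\norm{x-\Pi_jx}_p\leqslant C\,\omega_p(x,2^{-j})$, equivalently the single-block estimate $\norm{g_j}_p\leqslant C\,\omega_p(x,2^{-j})$, is \emph{false}, even for $x\in B^o_{p,\alpha}$ and with any constant uniform over the space. Counterexample: fix $j$, take $0<\varepsilon\ll 2^{-j}$, and let $x_\varepsilon$ be the sum over $r\in D_j$ of tent functions of height $1$ and half-width $\varepsilon$ centred at $r$. Then $\lambda_r(x_\varepsilon)=1$ for every $r\in D_j$, so $\norm{g_j}_p=(p+1)^{-1/p}$ and $\norm{x_\varepsilon-\Pi_jx_\varepsilon}_p\to(p+1)^{-1/p}$, whereas $\omega_p(x_\varepsilon,2^{-j})\leqslant(2^{j+1}\varepsilon)^{1/p}\to 0$ as $\varepsilon\to 0$. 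The obstruction is structural, not technical: $\lambda_r$ is a point-evaluation functional, and point values cannot be controlled by $L_p$-information at the single scale $2^{-j}$, because mass can hide in arbitrarily thin spikes sitting on the dyadic nodes; no ``local second-order oscillation'' argument at that one scale can repair this. (There is no contradiction with the theorem: $\norm{x_\varepsilon}_{p,\alpha}\to\infty$, since the moduli at the finer scales $\delta\asymp\varepsilon$ see the spikes.)

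The repair is necessarily multi-scale. Write each point value as a limit of local averages and telescope over the scales finer than $2^{-j}$: if $A_mx(r)$ denotes the average of $x$ over an interval of length $2^{-m}$ abutting $r$, then Jensen's inequality and disjointness of these intervals (legitimate now, because these are integral functionals) give $\bigl(\sum_{r\in D_j}\abs{A_{m+1}x(r)-A_mx(r)}^p\bigr)^{1/p}\lesssim 2^{m/p}\,\omega_p(x,2^{-m})$ for $m\geqslant j$, and summing over $m$ and treating the fully averaged second difference at scale $2^{-j}$ separately yields
$$
2^{j\alpha-j/p}\Big(\sum_{r\in D_j}\abs{\lambda_r(x)}^p\Big)^{1/p}
\lesssim 2^{j\alpha}\omega_p(x,2^{-j})
+\sum_{m\geqslant j}2^{(m-j)(1/p-\alpha)}\,2^{m\alpha}\omega_p(x,2^{-m})
\lesssim\norm{x}_{p,\alpha},
$$
the geometric series converging precisely because $\alpha>1/p$. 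This also corrects the claim in your first paragraph that the embedding into $C[0,1]$ is ``the only place the hypothesis $\alpha>1/p$ is genuinely used'': it is used a second time, quantitatively, to make this cross-scale sum converge. Finally, the same estimate in its little-$o$ form gives $2^{j\alpha}\norm{g_j}_p\to 0$ for $x\in B^o_{p,\alpha}$, which is exactly what your tail/density argument for the Schauder-basis property needs; with that substitution the remainder of your proof goes through.
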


The Schmidt orthogonalization procedure (with respect to inner 
product in $L_2(0, 1)$) applied 
to Faber-Schauder system leads to 
the Franklin system $\{f_k, k\geqslant 0\}$:
$$
f_k(t)=\sum_{i=0}^k c_{ik}\Lambda_i(t), \quad t\in [0, 1],
$$
with $c_{kk}>0$ for $k\geqslant 0$, where the matrix $(c_{ik})$ is uniquely determined.

\begin{Theorem}[\cite{MR1244574}] The Franklin system $\{f_n, n\geqslant 0\}$ is the
basis for $B^o_{p, \alpha}$, $p\geqslant 1, 0\leqslant \alpha<1$: each
$x\in B^o_{p, \alpha}$ has the 
unique representation,
$$
x=\sum_{k=0}^\infty x_kf_k,
$$
where $x_k=\langle x, f_k\rangle :=\int_0^1 x(t)f_k(t)\dd t$, $k\geqslant 0$.
\end{Theorem}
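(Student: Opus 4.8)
The plan is to verify the three standard ingredients that characterise a Schauder basis: that the coefficient functionals are well defined, that the partial-sum projections are uniformly bounded on $B^o_{p,\alpha}$, and that they converge to the identity on a dense subset. First I would observe that, since the $f_k$ are orthonormal in $L_2(0,1)$ and each $f_k$ is bounded while every $x\in B^o_{p,\alpha}\subset L_p(0,1)\subset L_1(0,1)$, the numbers $x_k=\langle x,f_k\rangle$ are well defined, and the operators
$$
P_n x=\sum_{k=0}^n\langle x,f_k\rangle\, f_k
$$
are exactly the projections associated with the system; by orthonormality they are biorthogonal to $\{f_k\}$, so once $P_nx\to x$ is shown the representation is automatically unique. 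Because the Gram--Schmidt procedure is triangular with $c_{kk}>0$, one has $\mathrm{span}\{f_0,\dots,f_n\}=\mathrm{span}\{\Lambda_0,\dots,\Lambda_n\}=:V_n$, so $P_n$ and the Faber--Schauder projection $Q_n$ of the preceding theorem both map onto the same $V_n$; this identification lets me pass between the two coordinate systems.

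The technical heart is a set of localisation (exponential decay) estimates for the Franklin functions of Ciesielski type: for $k$ with $2^{j-1}<k\leqslant 2^j$ there is a dyadic point $t_k$ such that
$$
|f_k(t)|\leqslant C\,2^{j/2}\,\gamma^{\,2^j|t-t_k|},\qquad t\in[0,1],
$$
for fixed constants $C>0$ and $\gamma\in(0,1)$, with analogous bounds for the inverse change-of-basis coefficients expressing each $\Lambda_i$ in the Franklin system. I would use these to control the Faber--Schauder coefficients $\lambda_r(P_nx)$ in terms of the data of $x$: writing $P_nx$ in the $\Lambda$-basis and summing the exponentially decaying contributions scale by scale, the off-diagonal interactions between dyadic levels are geometrically summable, so that for each level $j$ one bounds $\bigl(\sum_{r\in\cD_j}|\lambda_r(P_nx)|^p\bigr)^{1/p}$ by a fixed multiple of a geometrically weighted average of the corresponding quantities for $x$ over all levels. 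Taking the supremum over $j$ in the sequential norm then yields $\sup_n\norm{P_n}<\infty$. This uniform boundedness in the $B^o_{p,\alpha}$-norm is the main obstacle, since it is precisely here that the full range $p\geqslant 1$, $0\leqslant\alpha<1$ must be accommodated and the decay rate $\gamma$ weighed against the scale factors $2^{j(\alpha-1/p)}$ appearing in the sequential norm.

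Finally I would establish convergence on a dense subset. The ``little-$o$'' condition $\lim_{\delta\to0}\delta^{-\alpha}\omega_p(x,\delta)=0$ defining $B^o_{p,\alpha}$ makes this space the closure of the smooth (equivalently, the finitely supported Faber--Schauder, equivalently the finitely supported Franklin) functions; on any finite combination $x\in V_N$ one has $P_nx=x$ for all $n\geqslant N$, because $P_n$ is the identity on $V_n\supseteq V_N$. Hence $P_nx\to x$ trivially on this dense set, and combining this with the uniform bound $\sup_n\norm{P_n}<\infty$ through a standard $\varepsilon/3$ argument gives $P_nx\to x$ for every $x\in B^o_{p,\alpha}$. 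This is precisely the assertion that $\{f_k\}$ is a Schauder basis, while the coefficient formula $x_k=\langle x,f_k\rangle$ and the uniqueness of the expansion follow from the orthonormality noted at the outset.
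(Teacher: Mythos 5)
First, a remark on the comparison itself: the paper offers no proof of this statement --- it is quoted verbatim from \cite{MR1244574} --- so your proposal can only be measured against the known argument of Ciesielski--Kerkyacharian--Roynette.

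There is a genuine gap, and it sits exactly at the heart of your argument: the uniform boundedness of the projections $P_n$. You propose to control $\norm{P_nx}_{p,\alpha}$ by estimating the Faber--Schauder coefficients $\lambda_r(P_nx)$ and passing through the sequential norm $\sup_j 2^{j\alpha-j/p}\bigl(\sum_{r\in\cD_j}\abs{\lambda_r(\cdot)}^p\bigr)^{1/p}$. But the equivalence $\norm{x}_{p,\alpha}\sim\norm{x}_{p,\alpha}^{\mathrm{seq}}$ holds only for $p>1$ and $\alpha>1/p$ (this is precisely the hypothesis of the preceding Faber--Schauder theorem in the paper), whereas the theorem you are proving covers $p\geqslant 1$, $0\leqslant\alpha<1$. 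In the range $\alpha\leqslant 1/p$ the functionals $\lambda_r$ are not even well defined on all of $B^o_{p,\alpha}$ --- the space then contains discontinuous functions such as $\mathbf 1_{[0,1/2]}$ --- and they are unbounded where they are defined: a tent function of height $1$ and half-width $\varepsilon$ centered at $r\in\cD_j$ has $\lambda_r(x)=1$ while $\norm{x}_{p,\alpha}\asymp\varepsilon^{1/p-\alpha}\to 0$ as $\varepsilon\to 0$ when $\alpha<1/p$. Consequently no inequality in the chain $\norm{P_nx}_{p,\alpha}\lesssim\norm{P_nx}_{p,\alpha}^{\mathrm{seq}}\lesssim\norm{x}_{p,\alpha}^{\mathrm{seq}}\lesssim\norm{x}_{p,\alpha}$ survives, and your argument collapses exactly in the regime where the Franklin system is actually needed: for $p>1$, $\alpha>1/p$ the Faber--Schauder system is already a basis, so the whole content of this theorem is the case $\alpha\leqslant 1/p$ or $p=1$. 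The proof in \cite{MR1244574} avoids dyadic point evaluations altogether: it estimates the orthogonal spline projections directly against the modulus of smoothness, combining the exponential decay of the Franklin kernel (which you correctly anticipate) with Jackson-type estimates $\norm{x-P_{2^j}x}_p\lesssim\omega_p(x,2^{-j})$ and Bernstein inequalities for splines, so that all bounds stay expressed through $\omega_p$ rather than through the coefficients $\lambda_r$. Your remaining steps --- triangularity of Gram--Schmidt giving $\mathrm{span}\{f_0,\dots,f_n\}=V_n$, $P_n=\mathrm{id}$ on $V_n$, density of $\bigcup_N V_N$ plus an $\varepsilon/3$ argument, uniqueness via biorthogonality --- are sound, though the density of piecewise linear functions in $B^o_{p,\alpha}$ also deserves a proof in the range where the space contains discontinuous functions.
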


The following proposition is  proved in \cite{BDL:2005} for $\alpha>1/p$ but similar arguments works as well for any $0\leqslant\alpha<1$.    
\begin{Proposition}\label{Compact_sets_1} Let $p\geqslant 1$ and $0\leqslant \alpha<1$. The
set $K\subset B_{p, \alpha}^o$ is relatively compact if and only if 
\begin{itemize}
\item[$(i)$] $\sup_{x\in K}||x||_{p}<\infty,$ 
\item[$(ii)$] $\lim_{\delta\to 0}\sup_{x\in K}\delta^{-\alpha}\omega_{p, \alpha}(x, 
\delta)=0.$
\end{itemize}
\end{Proposition}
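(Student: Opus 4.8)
The plan is to establish both implications through the equivalence, valid because $B^o_{p,\alpha}$ is a complete separable Banach space, between relative compactness and total boundedness. I shall repeatedly use three elementary properties of the modulus of smoothness: subadditivity in the function, $\omega_p(x+y,\delta)\le\omega_p(x,\delta)+\omega_p(y,\delta)$ (Minkowski); the domination $\omega_p(x,\delta)\le 2\norm{x}_p$; and $\delta^{-\alpha}\omega_p(x,\delta)\le\norm{x}_{p,\alpha}$ for all $\delta\in(0,1)$, which is immediate from the definition of the norm.

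For necessity, suppose $K$ is relatively compact, hence bounded and totally bounded. Boundedness yields $(i)$ because $\norm{x}_p\le\norm{x}_{p,\alpha}$. For $(ii)$, fix $\eps>0$ and take a finite $\eps$-net $x_1,\dots,x_N\in B^o_{p,\alpha}$ of $K$. Each $x_i$ belongs to $B^o_{p,\alpha}$, so $\delta^{-\alpha}\omega_p(x_i,\delta)\to0$ and there is $\delta_0$ with $\max_i\sup_{\delta\le\delta_0}\delta^{-\alpha}\omega_p(x_i,\delta)<\eps$. For $x\in K$ choose $x_i$ with $\norm{x-x_i}_{p,\alpha}<\eps$; then for $\delta\le\delta_0$, subadditivity gives
\[
\delta^{-\alpha}\omega_p(x,\delta)\le\delta^{-\alpha}\omega_p(x-x_i,\delta)+\delta^{-\alpha}\omega_p(x_i,\delta)\le\norm{x-x_i}_{p,\alpha}+\eps<2\eps,
\]
and taking the supremum over $x\in K$ proves $(ii)$.

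For sufficiency, assume $(i)$ and $(ii)$; these first force $\norm{\cdot}_{p,\alpha}$-boundedness of $K$, since $(ii)$ bounds $\delta^{-\alpha}\omega_p$ uniformly for small $\delta$ while for $\delta$ away from $0$ one combines $\delta^{-\alpha}\omega_p(x,\delta)\le 2\delta^{-\alpha}\norm{x}_p$ with $(i)$. I would then, for each dyadic level $J$, introduce a finite-rank operator $P_J$ onto a space $V_J$ of functions adapted to the mesh $2^{-J}$: piecewise-linear interpolation at the dyadic nodes when $\alpha>1/p$ (where the embedding $B^o_{p,\alpha}\hookrightarrow C[0,1]$ renders point values meaningful, as in \cite{BDL:2005}), and the Franklin partial sum $Q_N x=\sum_{k\le N}\langle x,f_k\rangle f_k$ with $N\asymp 2^J$ in the general range. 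The argument then rests on three steps: $(a)$ a uniform approximation bound $\sup_{x\in K}\norm{x-P_Jx}_{p,\alpha}\to0$ as $J\to\infty$; $(b)$ for each fixed $J$, $P_J(K)$ is bounded in the finite-dimensional space $V_J$ by $(i)$ and the continuity of $P_J$, hence totally bounded; and $(c)$ given $\eps$, choosing $J$ with $\sup_K\norm{x-P_Jx}_{p,\alpha}<\eps$ and covering $P_J(K)$ by finitely many $\eps$-balls yields a finite $2\eps$-net for $K$. Total boundedness, and thus relative compactness, follows.

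The crux is step $(a)$. I would derive it from a Jackson-type estimate $\norm{x-P_Jx}_p\le C\,\omega_p(x,2^{-J})$ and a stability estimate $\norm{(P_Jx)'}_p\le C\,2^{J}\omega_p(x,2^{-J})$, splitting the supremum defining the seminorm at $\delta=2^{-J}$. Writing $\eta_J:=\sup_{y\in K}(2^{-J})^{-\alpha}\omega_p(y,2^{-J})$, so that $\eta_J\to0$ by $(ii)$ and $\omega_p(x,2^{-J})\le 2^{-J\alpha}\eta_J$ for $x\in K$, one bounds on the range $\delta>2^{-J}$ the quantity $\delta^{-\alpha}\omega_p(x-P_Jx,\delta)\le 2^{J\alpha}\cdot2\norm{x-P_Jx}_p\le C\eta_J$; and on $\delta\le 2^{-J}$, after separating $x$ (whose contribution is $\le\eta_J$ by $(ii)$) from $P_Jx$, the bound $\omega_p(P_Jx,\delta)\le\delta\norm{(P_Jx)'}_p$ together with $1-\alpha>0$ again yields $\le C\eta_J$ once the powers of $2^J$ cancel. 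Hence $\sup_{x\in K}\norm{x-P_Jx}_{p,\alpha}\le C\eta_J\to0$. The main obstacle is proving the Jackson and stability estimates uniformly, and in particular treating the range $\alpha\le1/p$, where elements of $B^o_{p,\alpha}$ need not be continuous: point-value interpolation is then unavailable and must be replaced by the $L_p$-bounded Franklin projections, for which the required direct estimates in terms of $\omega_p$ are the classical Ciesielski approximation bounds.
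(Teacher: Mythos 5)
Your proposal is correct in both directions, but your sufficiency argument takes a genuinely different and much heavier route than the paper's. The paper proves only the ``if'' direction (so your necessity argument, via a finite $\eps$-net in $K$ and subadditivity of $\omega_p$, is correct and in fact fills in a direction the paper silently omits), and it does so with no approximation theory at all: conditions (i) and (ii) give relative compactness of $K$ in $L_p[0,1]$ by the classical Fr\'echet--Kolmogorov criterion; from a subsequence $(x_n)$ converging in $L_p$ to some $x$, Fatou's lemma yields $\omega_p(x,\delta)\leqslant\sup_n\omega_p(x_n,\delta)$, hence $x\in B^o_{p,\alpha}$; and $(x_n)$ is Cauchy in $\norm{\cdot}_{p,\alpha}$ because the supremum over $\delta$ is split at a threshold $\delta_\eps$ --- below it, (ii) applied to $x_n$ and $x_m$ separately gives a contribution smaller than $2\eps$, above it the contribution is at most $2\delta_\eps^{-\alpha}\norm{x_n-x_m}_p\to0$. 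Note that this is exactly the splitting trick you use, but applied to differences $x_n-x_m$ of elements of $K$ rather than to $x-P_Jx$; that substitution is what lets the paper bypass your step $(a)$ entirely. Your route --- finite-rank projections together with the Jackson bound $\norm{x-P_Jx}_p\leqslant C\,\omega_p(x,2^{-J})$ and the Bernstein-type stability bound $\norm{(P_Jx)'}_p\leqslant C\,2^J\omega_p(x,2^{-J})$ --- is sound, and your cancellation of the powers of $2^J$ checks out; but those two estimates, which you defer to Ciesielski's theory (unavoidably so for $\alpha\leqslant 1/p$, as you correctly note, since point-value interpolation is then meaningless), carry essentially all the weight and are considerably less elementary than the $L_p$ compactness theorem the paper invokes. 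What your approach buys is an explicit, constructive total-boundedness proof aligned with the Schauder-decomposition framework used elsewhere in the paper; what the paper's approach buys is a half-page, essentially self-contained argument.
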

\begin{proof}
One easily checks that (i) and (ii) yields  relative compactness of $K$  in $L_p([0, 1])$. 
Therefore for any sequence $(x_n)_{n\geqslant 1}$ of $K$ there exists a subsequence, which
we denote also $(x_{n})$, converging in $L_p([0, 1])$ to some  $x\in  L_p([0, 1])$. To finish
the proof it suffices to prove that
\begin{itemize}
\item[$(a)$] $x\in B^o_{p, \alpha}$;
\item[$(b)$] $(x_n)_{n\geqslant 1}$ is a Cauchy sequence in $B^o_{p, \alpha}$.
\end{itemize}
Taking a.s. convergence subsequence  $(x_{n'})$ and applying Fatou lemma we easily obtain for any $0<\delta\leqslant 1$,
\begin{equation}\label{comp_0}
\omega_p(x, \delta)\leqslant \liminf_{n'}\omega_p(x_{n'},\delta)\leqslant \sup_n\omega_p(x_n, \delta).
\end{equation}
This yields (a). 
To prove (b) observe that for each $\e>0$ there exists $\delta_\e>0$ such that 
$\delta^{-\alpha}\sup_n \omega_p(x_n, \delta)<\e\quad \textrm{when}\quad 
\delta<\delta_\e$, hence, 
for $n,m\geqslant 1$
\begin{align*}
||x_n-x_m||_{p, \alpha}&=||x_n-x_m||_p+
\max\left\{[\sup_{0<\delta\leqslant \delta_\e}; \sup_{\delta_\e<\delta\leqslant 1}]\delta^{-\alpha}
\omega_p(x_n-x_m, \delta)\right\}\\
&\leqslant  ||x_n-x_m||_p+\e+2\delta_{\e}^{-\alpha}||x_n-x_m||_p
\end{align*} 
and we complete the proof since  $\lim_{n, m\to \infty}||x_n-x_m||_p=0$.
\end{proof}

Consider stochastic processes $Z, (Z_n)_{n\geqslant 1}$ with paths space $B^o_{p,\alpha}$ which is endowed with Borel $\sigma$-algebra $\cB(B^o_{p, \alpha})$. Let $P_Z, P_{Z_n}, n\geqslant 1,$ be the corresponding distributions. 
As generally accepted the sequence $(Z_n)$ converges in distribution to $Z$ in $B^o_{p,\alpha}$  (denoted $Z_n\wc Z$ in $B^o_{p,\alpha}$) provided $(P_{Z_n})$ converges weakly to $P_Z$:  
$\lim_{n\to\infty}\int f \dd P_{Z_n}=\int f\dd P_{Z}$ for each 
bounded continuous $f:B^o_{p, \alpha}\to \R$. The sequence $(Z_n)$ is tight in $B^o_{p, \alpha}$ if for each $\e>0$ there is a relatively compact set $K_\e\subset B^o_{p, \alpha}$ such that $\inf_{n\geqslant 1}\PP(Z_n\in K_\e)>1- \e$. 
Due to the well known Prohorov's theorem  convergence in distribution in a separable metric space  is  coherent with tightness. Indeed, to prove convergence in distribution one has to establish tightness and to ensure uniqueness of the limiting distributions. 
 
The following tightness criterion is obtained from Proposition  \ref{Compact_sets_1}.
\begin{Theorem}\label{thm:necessary_and_sufficient_condition_for_tightness}
The sequence $\prt{Z_n}$ of random processes with paths 
in  $B^o_{p,\alpha}(0, 1)$ is tight if and only if the following two conditions are satisfied: 
\begin{itemize} 
\item[(i)] $\lim_{b\to\infty}\sup_{n\geqslant 1} \PP\ens{||Z_n||_{p}>b}=0$;  
\item[(ii)]  for each $\e>0$ 
$$
\lim_{\delta\to 0}\sup_{n\geqslant 1} \PP\ens{\omega_{p, \alpha}(Z_n, \delta)\geqslant \e}=0.
$$
\end{itemize}
\end{Theorem}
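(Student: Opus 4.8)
The plan is to derive this criterion directly from the compactness characterization in Proposition~\ref{Compact_sets_1} together with Prohorov's theorem, which applies because $B^o_{p,\alpha}$ is a separable Banach space. Thus tightness of $\prt{Z_n}$ is equivalent to the statement that for every $\e>0$ one can find a relatively compact set $K_\e$ with $\inf_{n}\PP\ens{Z_n\in K_\e}>1-\e$, and by Proposition~\ref{Compact_sets_1} such sets are exactly those bounded in $L_p$ on which $\omega_{p,\alpha}(\cdot,\delta)$ tends to $0$ uniformly as $\delta\to0$. I would first record that $\norm{Z_n}_p$ and $\omega_{p,\alpha}(Z_n,\delta)$ are genuine random variables (the modulus being a supremum that may be restricted to rational increments by continuity), so that all probabilities appearing below are well defined.

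For the sufficiency of $(i)$ and $(ii)$, fix $\e>0$. Using $(i)$ I would choose $b$ with $\sup_n\PP\ens{\norm{Z_n}_p>b}<\e/2$, and using $(ii)$ I would choose, for each integer $m\geq1$, a radius $\delta_m>0$ with $\sup_n\PP\ens{\omega_{p,\alpha}(Z_n,\delta_m)\geq 1/m}<\e\,2^{-(m+1)}$. Then set
$$
K_\e:=\bigl\{x\in B^o_{p,\alpha}:\ \norm{x}_p\leq b\ \text{and}\ \omega_{p,\alpha}(x,\delta_m)\leq 1/m\ \text{for all }m\geq1\bigr\}.
$$
I claim $K_\e$ is relatively compact: it is $L_p$-bounded by construction, and for the uniform modulus condition of Proposition~\ref{Compact_sets_1} one uses monotonicity of $\delta\mapsto\omega_{p,\alpha}(x,\delta)$, so that given $\eta>0$, choosing $m>1/\eta$ forces $\sup_{x\in K_\e}\omega_{p,\alpha}(x,\delta)\leq 1/m<\eta$ for all $\delta\leq\delta_m$. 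A union bound then gives $\PP\ens{Z_n\notin K_\e}\leq \e/2+\sum_{m\geq1}\e\,2^{-(m+1)}=\e$ uniformly in $n$, which is exactly tightness.

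For the necessity, assume $\prt{Z_n}$ is tight and fix $\e>0$; take a relatively compact $K_\e$ with $\inf_n\PP\ens{Z_n\in K_\e}>1-\e$. By Proposition~\ref{Compact_sets_1} the quantity $b_\e:=\sup_{x\in K_\e}\norm{x}_p$ is finite, whence $\PP\ens{\norm{Z_n}_p>b_\e}\leq\PP\ens{Z_n\notin K_\e}<\e$ uniformly in $n$; letting $\e\to0$ yields $(i)$. Likewise, for each $\e'>0$ the uniform modulus condition supplies $\delta_0>0$ with $\sup_{x\in K_\e}\omega_{p,\alpha}(x,\delta)<\e'$ for $\delta<\delta_0$, so that $\PP\ens{\omega_{p,\alpha}(Z_n,\delta)\geq\e'}\leq\PP\ens{Z_n\notin K_\e}<\e$ for all such $\delta$ and all $n$; since $\e$ is arbitrary this gives $(ii)$. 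The step I expect to require the most care is the sufficiency direction, specifically verifying that the set $K_\e$ assembled from countably many level sets of the modulus is genuinely relatively compact in the sense of Proposition~\ref{Compact_sets_1}; the monotonicity argument above is what upgrades the pointwise constraints $\omega_{p,\alpha}(x,\delta_m)\leq 1/m$ to the required uniform vanishing of the modulus as $\delta\to0$.
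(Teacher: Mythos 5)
Your architecture is the one the paper intends: its proof is just a pointer to Theorem~8.2 of \cite{Billingsley:1968}, whose argument is exactly what you wrote (relatively compact sets built, via Proposition~\ref{Compact_sets_1}, as countable intersections of modulus level sets plus a union bound; the converse read off a compact set), and your necessity half is correct. The gap is the single claim your sufficiency half leans on, namely that $\delta\mapsto\omega_{p,\alpha}(x,\delta)$ is monotone. The paper never defines $\omega_{p,\alpha}$, and if it means the weighted modulus $\delta^{-\alpha}\omega_p(x,\delta)$ --- which is how condition (ii) is actually verified later in the paper, see the quantity $I_n(\delta,\e)$ in Section~5 --- then monotonicity is false: $\omega_p(x,\cdot)$ is nondecreasing, but the weight $\delta^{-\alpha}$ grows as $\delta$ decreases. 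Your interpolation then collapses: for $\delta_{m+1}<\delta\leq\delta_m$ all you get is $\delta^{-\alpha}\omega_p(x,\delta)\leq\prt{\delta_m/\delta_{m+1}}^{\alpha}\delta_m^{-\alpha}\omega_p(x,\delta_m)$, and nothing controls the ratios $\delta_m/\delta_{m+1}$, since condition (ii) only gives you each $\delta_m$ below some level-dependent threshold that may shrink arbitrarily fast. This is not a removable technicality: under that reading, the sufficiency direction is actually false. Take $b_J$ the triangular bump of width $2^{1-J}$ and height $2^{-J(\alpha-1/p)}$, so that $\sup_{\delta}\delta^{-\alpha}\omega_p(b_J,\delta)\asymp 1$ while $\delta^{-\alpha}\omega_p(b_J,\delta)\leq C\,2^{-|\log_2(1/\delta)-J|\min(\alpha,1-\alpha)}$, and set $Z_n:=b_{J_n}$ with $J_n$ uniform on $\{n+1,\dots,2n\}$. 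Then (i) and (ii) hold with $\sup_{n\geq 1}$, because the event $\{\delta^{-\alpha}\omega_p(Z_n,\delta)\geq\e\}$ confines $J_n$ to a window of bounded length around $\log_2(1/\delta)$, an event of probability at most $C(\e)/n$ which is moreover vacuous unless $n$ is at least of order $\log_2(1/\delta)$; yet $\prt{Z_n}$ is not tight, since $\norm{Z_n}_p\to 0$ forces every subsequential weak limit to be $0$, while $\norm{Z_n}_{p,\alpha}\geq c>0$ almost surely, contradicting the portmanteau theorem.

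The repair, which is also what makes the theorem itself true, is to define the modulus monotonically: $\omega_{p,\alpha}(x,\delta):=\sup_{0<h\leq\delta}h^{-\alpha}\omega_p(x,h)$. With this your proof closes exactly as you wrote it: for $x\in K_\e$ and $\delta\leq\delta_m$ one has $\delta^{-\alpha}\omega_p(x,\delta)\leq\omega_{p,\alpha}(x,\delta)\leq\omega_{p,\alpha}(x,\delta_m)\leq 1/m$, which is condition (ii) of Proposition~\ref{Compact_sets_1}. In the necessity half you then need the elementary remark, obtained by exchanging two suprema, that for any set $K$, $\lim_{\delta\to 0}\sup_{x\in K}\delta^{-\alpha}\omega_p(x,\delta)=0$ if and only if $\lim_{\delta\to 0}\sup_{x\in K}\omega_{p,\alpha}(x,\delta)=0$, so that Proposition~\ref{Compact_sets_1} does control the monotone modulus uniformly on compact sets (note that the proposition as printed carries the same notational slip, its condition (ii) showing $\delta^{-\alpha}\omega_{p,\alpha}$). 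Finally, Prohorov's theorem is not needed anywhere in your argument: the paper defines tightness directly through relatively compact sets, which is all that both directions use.
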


\begin{proof} See, e.g., the proof of Theorem~8.2. in 
\cite{Billingsley:1968}. 
\end{proof}


\begin{Theorem}\label{thm:NSC_tightness} Let $1>\alpha>1/p$.
The sequence $(Z_n)$  of random elements in
the Besov space $B^o_{p, \alpha}(0, 1)$ is tight
if and only if the following two conditions are satisfied:
\begin{itemize}
\item [(i)]\  { $\displaystyle \lim_{b\to \infty}
\sup_n \PP\bigl\{||Z_n||_p>b\bigr\}= 0$;}

\item[(ii)]\ \ {  for each $\e>0$,
$\displaystyle
\lim_{J\to \infty}\limsup_{n\to\infty} 
\PP\Big(\sup_{j\geqslant J}2^{j\alpha-j/p }\Big(\sum_{r\in D_j}
\abs{\lambda_r(\zeta_n)}^p\Big)^{1/p}>\e\Big)= 0.
$}
\end{itemize}
\end{Theorem}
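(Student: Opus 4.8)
The plan is to deduce Theorem~\ref{thm:NSC_tightness} from the already established Theorem~\ref{thm:necessary_and_sufficient_condition_for_tightness}, whose condition (i) is word for word the one here. It therefore suffices to prove that, \emph{in the presence of} (i), the smoothness-modulus condition (ii) of Theorem~\ref{thm:necessary_and_sufficient_condition_for_tightness} is equivalent to the Faber--Schauder coefficient condition (ii) above. Writing $A_j(x):=2^{-j/p}\prt{\sum_{r\in D_j}\abs{\lambda_r(x)}^p}^{1/p}$, so that the coefficient condition reads $\lim_{J\to\infty}\limsup_{n\to\infty}\PP\prt{\sup_{j\geq J}2^{j\alpha}A_j(Z_n)>\e}=0$, the bridge between the two formulations is a pair of two-sided estimates, valid for every $x\in B^o_{p,\alpha}$, comparing the dyadic blocks $A_j(x)$ with the $L_p$-modulus $\omega_p(x,\delta)$. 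Under the standing assumption $1/p<\alpha<1$ the Faber--Schauder system is a basis and these estimates are exactly those underlying the sequential-norm equivalence of \cite{MR1244574}.

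The first, \emph{direct} estimate bounds coefficients by the modulus. Since for $r\in D_j$ one has $\lambda_r(x)=-\tfrac12\bigl(x(r+2^{-j})-2x(r)+x(r-2^{-j})\bigr)$, a second-order difference at scale $2^{-j}$, a discretization argument gives $A_j(x)\lesssim\omega_p(x,2^{-j})$. The second, \emph{inverse} estimate bounds the modulus by the coefficients: each level block $B_j:=\sum_{r\in D_j}\lambda_r(x)\Lambda_r$ is piecewise linear with $\norm{B_j}_p\asymp A_j(x)$ and $\omega_p(B_j,\delta)\lesssim\min(1,2^j\delta)A_j(x)$, so that summing over $j$ and splitting at the level $J$ with $2^{-J}\approx\delta$ yields $\omega_p(x,\delta)\lesssim\sum_{j\leq J}2^j\delta\,A_j(x)+\sum_{j>J}A_j(x)$ and, after multiplication by $\delta^{-\alpha}$, the clean bound $\delta^{-\alpha}\omega_p(x,\delta)\lesssim\sup_j 2^{j\alpha}A_j(x)$.

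With these in hand I would argue as follows. For the passage from the coefficient condition to the modulus condition, fix $\e>0$ and a cut-off $J_0$; keeping $J_0$ fixed and letting $\delta\to 0$ (so $J\geq J_0$), the inverse estimate gives, for $\delta\leq 2^{-J_0}$, a bound of the shape $\delta^{-\alpha}\omega_p(x,\delta)\lesssim \delta^{1-\alpha}2^{J_0}\sum_{j\leq J_0}A_j(x)+\sup_{j>J_0}2^{j\alpha}A_j(x)$. The second term is controlled by the coefficient condition once $J_0$ is large; for the first, the direct estimate together with $\omega_p(x,\delta)\leq 2\norm{x}_p$ gives $\sum_{j\leq J_0}A_j(x)\lesssim (J_0+1)\norm{x}_p$, which is tight by condition (i), and the prefactor $\delta^{1-\alpha}\to 0$ (here $\alpha<1$ is used) makes this head term negligible as $\delta\to 0$. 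Choosing $J_0$ first (to kill the tail via (ii)) and then $\delta$ small (to kill the head via (i)) yields the modulus condition. The reverse implication follows readily from the direct estimate, which gives $\sup_{j\geq J}2^{j\alpha}A_j(x)\lesssim\sup_{0<\delta\leq 2^{-J}}\delta^{-\alpha}\omega_p(x,\delta)$ and thus turns control of the modulus directly into control of the coefficient tail. Finally, the discrepancy between the $\sup_n$ of Theorem~\ref{thm:necessary_and_sufficient_condition_for_tightness} and the $\limsup_n$ here is harmless, since each single $Z_n$ is tight in the separable space $B^o_{p,\alpha}$, so only the tail in $n$ matters. I expect the main obstacle to be the discretization step in the direct estimate—passing from the sampled pointwise second differences $\lambda_r(x)$ to the integral modulus $\omega_p(x,2^{-j})$—together with the bookkeeping of the head/tail split, which must be arranged so that the finitely many low-level blocks are precisely what condition (i) absorbs while the powers of $\delta$ match on both sides.
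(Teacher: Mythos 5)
Your overall strategy---deducing the theorem from Theorem~\ref{thm:necessary_and_sufficient_condition_for_tightness} by comparing the dyadic coefficient blocks $A_j(x)=2^{-j/p}\prt{\sum_{r\in D_j}\abs{\lambda_r(x)}^p}^{1/p}$ with the modulus $\omega_p(x,\delta)$---is genuinely different from the paper's, which disposes of the statement in one line by invoking Suquet's tightness criterion for Schauder decomposable Banach spaces \cite{Suquet:1999}. Your inverse estimate (modulus bounded by coefficients, via the blocks $B_j$) is correct. But there is a real gap: the ``direct estimate'' $A_j(x)\lesssim\omega_p(x,2^{-j})$ is false, and no discretization argument can prove it, because a pointwise sampled second difference can never be majorized by an integral modulus at the same scale. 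Concretely, let $x$ be the tent function of height $1$ and width $2\eps$ centered at some $r\in D_j$, with $\eps\ll 2^{-j}$; it belongs to $B^o_{p,\alpha}$, has $\lambda_r(x)=1$, hence $A_j(x)\geq 2^{-j/p}$, while $\omega_p(x,2^{-j})\leq 2\norm{x}_p\leq C\eps^{1/p}$ is arbitrarily small. The correct substitute is a multi-scale bound of the form $A_j(x)\lesssim\sum_{k\geq j}2^{(k-j)/p}\omega_p(x,2^{-k})$, obtained by replacing sampled differences by local averages and iterating over finer dyadic scales; only after summing this series---convergent precisely because $\alpha>1/p$---does one get $\sup_{j\geq J}2^{j\alpha}A_j(x)\lesssim\sup_{0<\delta\leq 2^{-J}}\delta^{-\alpha}\omega_p(x,\delta)$. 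This is the substantial half of the Ciesielski--Kerkyacharian--Roynette norm equivalence, not a routine bookkeeping step, and your necessity direction rests entirely on it.

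The same false estimate breaks the sufficiency direction at its crucial point: the head-term bound $\sum_{j\leq J_0}A_j(x)\lesssim (J_0+1)\norm{x}_p$, which is what is supposed to let condition (i) absorb the finitely many low levels, fails for the same tent example ($\norm{x}_p$ is tiny while $A_j(x)$ is not). Condition (i) controls only an integral norm and can never, by itself, control the point-evaluation functionals $\lambda_r$; tightness of the low-level coefficients has to come from (i) and (ii) \emph{jointly}, for instance via a bootstrap of the type $\norm{x}_\infty\leq C(J)\prt{\norm{x}_p+\sup_{j\geq J}2^{j\alpha}A_j(x)}$, proved by writing a point value as a local average plus an oscillation and using the coefficient tails (not the $L_p$ norm) to control the oscillation. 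Your remark reconciling the $\sup_n$ of Theorem~\ref{thm:necessary_and_sufficient_condition_for_tightness} with the $\limsup_n$ here is fine, as is the inverse estimate, but without the two repairs above---the multi-scale direct estimate for necessity, and the head-control bootstrap for sufficiency---the argument does not close; supplying them amounts to reproving the structural facts about the Faber--Schauder decomposition that Suquet's criterion packages for the paper.
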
 
\begin{proof} It is just a corollary of tightness criterion established in \cite{Suquet:1999} for Schauder decomposable Banach spaces as Besov spaces $B^o_{p, \alpha}$ with $\alpha>1/p,$ are such. \end{proof} 

\section{Proofs: the case $\alpha>1/p$}
\label{sec:proof_alpha_>_1/p}

We start this section with some auxiliary results which could be helpful when dealing 
with weak invariance principle for stationary sequences.
Throughout we denote
$$
W_{f,n}=n^{-1/2}\zeta_n.
$$
 
\subsection{Auxiliary results}

\begin{Lemma}\label{remark_fdd_1}  Let $p\geqslant 1$ and $\alpha>1/p$. Assume that $Z$ is a random element in
boths spaces $C[0, 1]$ and $B^o_{p, \alpha}$. Then for any 
stationary sequence $\prt{f\circ \TT^j}$ if 
\begin{itemize}
\item[$(i)$] $W_{f,n}\wc Z$ in $C[0, 1]$, and 
\item[$(ii)$] $(W_{f,n})$ is tight in $B^o_{p, \alpha}$, 
\end{itemize}
then $W_{f,n}\wc Z$ in $B^o_{p, \alpha}$. 
\end{Lemma}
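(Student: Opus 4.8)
The plan is to combine the two hypotheses via a standard argument identifying the weak limit from finite-dimensional convergence together with tightness. Since $\prt{W_{f,n}}$ is assumed tight in $B^o_{p,\alpha}$, Prohorov's theorem (as recalled before Theorem~\ref{thm:necessary_and_sufficient_condition_for_tightness}) guarantees that every subsequence of $\prt{W_{f,n}}$ admits a further subsequence converging in distribution in $B^o_{p,\alpha}$ to some random element $\widetilde Z$. The entire task then reduces to showing that any such subsequential limit $\widetilde Z$ must coincide in distribution with $Z$; once this is established, the whole sequence $\prt{W_{f,n}}$ converges in distribution to $Z$ in $B^o_{p,\alpha}$ by the usual subsequence principle.

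To identify the limit I would use the coordinate functionals attached to the Faber--Schauder basis. For each fixed $r\in\cD$, the map $x\mapsto \lambda_r(x)=x(r)-\tfrac{1}{2}\prt{x(r^+)+x(r^-)}$ is a continuous linear functional on $B^o_{p,\alpha}$, since by the first Theorem of Section~\ref{sec:functional_analysis_prob_tools} the sequential norm dominates each coefficient: $\abs{\lambda_r(x)}\leqslant 2^{-j\alpha+j/p}\norm{x}_{p,\alpha}^{\mathrm{seq}}$ for $r\in D_j$. More generally, for any finite set $\{r_1,\dots,r_k\}\subset\cD$ the vector $\prt{\lambda_{r_1}(x),\dots,\lambda_{r_k}(x)}$ is a continuous map from $B^o_{p,\alpha}$ into $\R^k$. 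Hence if a subsequence $W_{f,n'}\wc\widetilde Z$ in $B^o_{p,\alpha}$, the continuous mapping theorem yields convergence of these finite coordinate vectors to the corresponding coordinates of $\widetilde Z$. On the other hand, each $\lambda_r$ is also continuous on $C[0,1]$ (it is built from point evaluations, which are continuous for the sup-norm), so hypothesis~$(i)$ together with the continuous mapping theorem gives convergence of the same coordinate vectors of $W_{f,n}$ to those of $Z$ along the full sequence. Comparing the two limits forces
$$
\prt{\lambda_{r_1}(\widetilde Z),\dots,\lambda_{r_k}(\widetilde Z)}\overset{\cD}{=}\prt{\lambda_{r_1}(Z),\dots,\lambda_{r_k}(Z)}
$$
for every finite collection of dyadics.

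It remains to pass from equality of all finite-dimensional coordinate distributions to equality of laws on $B^o_{p,\alpha}$. Because the Faber--Schauder system is a Schauder basis of $B^o_{p,\alpha}$ (first Theorem of Section~\ref{sec:functional_analysis_prob_tools}), each element is determined by its coordinate sequence $\prt{\lambda_r}_{r\in\cD}$, so the family $\{\lambda_r\}$ separates points; moreover it generates the Borel $\sigma$-algebra, and the cylinder sets built from finitely many $\lambda_r$ form a convergence-determining class for Borel measures on this separable Banach space. Consequently the agreement of all finite coordinate distributions forces $P_{\widetilde Z}=P_Z$ as laws on $B^o_{p,\alpha}$, which completes the identification and the proof. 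The main obstacle I anticipate is precisely this last measure-theoretic step: one must argue carefully that coordinate cylinders are convergence-determining (or at least that the coordinate functionals generate the Borel $\sigma$-algebra and separate points), rather than treating it as automatic; the continuity of the $\lambda_r$ on both spaces and the subsequence extraction are routine by comparison.
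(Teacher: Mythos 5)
Your proposal is correct and follows essentially the same route as the paper: extract subsequential limits via tightness and Prohorov's theorem, identify them with $Z$ through the Faber--Schauder coefficients $\lambda_r$ (continuous on both $C[0,1]$ and $B^o_{p,\alpha}$), invoke the fact that the Schauder coefficients determine the law, and conclude by the subsequence principle. If anything, your treatment is slightly more careful than the paper's, since you work with joint finite-dimensional coordinate vectors rather than single coefficients and you flag the measure-theoretic step (that the $\lambda_r$ generate the Borel $\sigma$-algebra and separate laws) which the paper treats as immediate.
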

\begin{proof} From (ii) we have that each subsequence of $(W_{f,n})$ has further 
subsequence that converges in distribution. If $W_{f,n'}\wc Y'$ and $W_{f, n''}\wc Y''$ 
then  we have that $\lambda_r(W_{f,n'})\wc \lambda_r(Y')$ and
$\lambda_r(W_{f,n''})\wc \lambda_r(Y'')$ for any dyadic number $r$. But (i) gives 
that both $\lambda_r(Y')$ and $\lambda_r(Y'')$ have the same distribution as $\lambda_r(Z)$.
Since Schauder coefficients $(\lambda_r(Z))$ determines the distribution of $Z$  we can 
conclude that $Y'$ and $Y''$ are 
equally distributed with $Z$. This ends the proof.
\end{proof} 

For polygonal line processes build from any stationary sequence  the tightness
conditions given in Theorem \ref{thm:NSC_tightness} can be simplified. 

\begin{Theorem}\label{cor:tightness_stationary_sequences}
  Let $p\geqslant 1$ and $\alpha>1/p$. 
  The sequence $(W_{f,n})$ is tight in $B_{p, \alpha}^o$
  provided that 
 \begin{equation}\label{eq:tightness_stationary_sequences}
  \lim_{J\to +\infty}
  \limsup_{N\to +\infty}
 \sum_{j=J}^N2^{j}\int_0^1x^{p-1}\PP\Big(2^{-(N-j)/2}
 \max_{1\leqslant k\leqslant 2^{N-j}}\left|S_{f,k}\right|>x
2^{{j}/{q(p,\alpha)}}\Big)\mathrm dx=0.
 \end{equation}
\end{Theorem}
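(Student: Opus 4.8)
The plan is to reduce the abstract tightness criterion of Theorem~\ref{thm:NSC_tightness} to the concrete moment-type condition~\eqref{eq:tightness_stationary_sequences} by exploiting stationarity to control the Schauder coefficients $\lambda_r(W_{f,n})$ uniformly over each dyadic level. The first condition (i) of Theorem~\ref{thm:NSC_tightness}, namely $\lim_{b\to\infty}\sup_n\PP(\norm{W_{f,n}}_p>b)=0$, should follow from~\eqref{eq:tightness_stationary_sequences} together with an elementary bound on the $L_p$-norm of a polygonal line, so the heart of the matter is condition~(ii): for each $\e>0$,
$$
\lim_{J\to\infty}\limsup_{n\to\infty}\PP\Big(\sup_{j\geqslant J}2^{j\alpha-j/p}\Big(\sum_{r\in D_j}\abs{\lambda_r(W_{f,n})}^p\Big)^{1/p}>\e\Big)=0.
$$

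First I would compute $\lambda_r(W_{f,n})$ explicitly. For $r\in\cD_j$ the coefficient $\lambda_r(x)=x(r)-\tfrac12\big(x(r^+)+x(r^-)\big)$ measures the second difference of the polygonal path across the dyadic triangle at $r$, so $\lambda_r(W_{f,n})$ is (up to the normalization $n^{-1/2}$) an increment of $\zeta_{f,n}$ of the form $\tfrac12\big(\zeta_{f,n}(r)-\zeta_{f,n}(r^-)\big)-\tfrac12\big(\zeta_{f,n}(r^+)-\zeta_{f,n}(r)\big)$, which over an interval of length $2^{-j}$ corresponds to a partial sum of roughly $n2^{-j}$ consecutive terms $f\circ\TT^i$. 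By strict stationarity the law of each such block depends only on its length, not on its starting index, so all $2^{j-1}$ coefficients $\{\lambda_r(W_{f,n}):r\in\cD_j\}$ share a common marginal distribution governed by $\max_{k}\abs{S_{f,k}}$ over blocks of the appropriate length. The plan is then to bound the $p$-th moment sum $\sum_{r\in D_j}\abs{\lambda_r(W_{f,n})}^p$ by a union-type estimate and pass to the maximal partial sum $\max_{1\leqslant k\leqslant 2^{N-j}}\abs{S_{f,k}}$, with $n$ replaced by a dyadic scale $2^N$; the layer-cake representation $\E{|Z|^p}=\int_0^\infty px^{p-1}\PP(|Z|>x)\,dx$ converts the moments into the tail integral appearing in~\eqref{eq:tightness_stationary_sequences}, and the factor $2^j$ counts the number of blocks at level $j$ while $2^{j/q(p,\alpha)}$ absorbs the weight $2^{j\alpha-j/p}$ after substituting $q(p,\alpha)=1/(1/2-\alpha+1/p)$.

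The main obstacle I expect is the passage from a fixed index $n\to\infty$ in condition~(ii) to the purely dyadic double limit $\lim_{J}\limsup_{N}$ over $2^N$ that appears in~\eqref{eq:tightness_stationary_sequences}. This requires a comparison between $W_{f,n}$ at a general $n$ and at the nearest dyadic scale $2^N$, using monotonicity of the maximal function and a sandwiching argument so that the supremum over $j\geqslant J$ of a random quantity is dominated (up to constants in the tail level $x$) by the dyadic sum; one must check that the truncation at level $N$ and the tail beyond $N$ are both negligible, the former by the $\limsup_N$ and the latter by a direct estimate using stationarity and the normalization. A further technical point is bounding a supremum over $j$ of block maxima by a sum over $j$, which I would handle by the standard device of controlling $\PP(\sup_{j\geqslant J}(\cdots)>\e)\leqslant\sum_{j\geqslant J}\PP((\cdots)_j>\e_j)$ with a summable choice of thresholds $\e_j$, reducing the supremum to the additive structure of~\eqref{eq:tightness_stationary_sequences}. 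Once these reductions are in place, verifying condition~(i) and assembling the two conditions into tightness via Theorem~\ref{thm:NSC_tightness} is routine.
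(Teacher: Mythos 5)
Your overall plan---verify conditions (i) and (ii) of Theorem~\ref{thm:NSC_tightness}, identify the Schauder coefficients $\lambda_r(W_{f,n})$ with block increments of partial sums, and convert to the tail integral in \eqref{eq:tightness_stationary_sequences}---is the paper's plan, but the two steps you treat as routine are exactly where the difficulty sits, and as sketched they fail. First, the identification of $\lambda_r(W_{f,n})$ with a partial sum of roughly $n2^{-j}$ consecutive terms is only valid for $j\leqslant\lfloor\log n\rfloor$. Condition (ii) involves $\sup_{j\geqslant J}$, hence also all levels $j>\lfloor\log n\rfloor$, where a dyadic cell is \emph{finer} than the mesh $1/n$ of the polygonal line: there $\lambda_r(W_{f,n})$ is a multiple $n2^{-j}<1$ of one or two single increments $f\circ\TT^k$, and the hypothesis \eqref{eq:tightness_stationary_sequences}, which speaks only of maxima of partial sums, gives no direct control of this range. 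Your ``direct estimate using stationarity and the normalization'' does not dispose of it. In the paper this regime costs three genuine steps: (a) extracting from \eqref{eq:tightness_stationary_sequences} the pointwise tail estimate $\lim_{t\to\infty}t^{q(p,\alpha)}\PP\left(|f|>t\right)=0$, by keeping only the term $j=N$ of the sum (whose blocks have length $1$); (b) a weak law of large numbers (Lemma~\ref{lem:LLN}), proved by truncation at level $\delta n^{1/q(p,\alpha)}$, stating that $n^{-1/q(p,\alpha)}\bigl(\sum_{k\leqslant n}|f\circ\TT^k|^p\bigr)^{1/p}\to 0$ in probability; and (c) a counting argument showing that at most $2^j/n$ points of $D_j$ fall in each interval $[k/n,(k+1)/n)$, so that the whole high-frequency supremum is dominated by the quantity in (b).

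Second, your layer-cake conversion of ``moments into the tail integral'' cannot work as stated. The identity $\EE|Z|^p=\int_0^\infty px^{p-1}\PP\left(|Z|>x\right)\mathrm dx$ requires the integral over all of $(0,\infty)$, whereas \eqref{eq:tightness_stationary_sequences} integrates only over $x\in[0,1]$, and this restriction is essential: in the regime where the theorem is applied ($1/p<\alpha<1/2$) one has $q(p,\alpha)<p$, so the available tail information $\PP\left(|f|>t\right)=o\bigl(t^{-q(p,\alpha)}\bigr)$ is compatible with $\EE|f|^p=\infty$. Full $p$-th moments of the blocks need not exist, so a Markov bound on them is vacuous. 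The paper circumvents this by splitting each event $A_{n,j}$ over the auxiliary event $B_{n,j}$ that some block increment exceeds $n^{1/2}2^{j(1/p-\alpha)}$: on $B_{n,j}$ one uses a plain union bound on probabilities (no moments at all), while on $B_{n,j}^c$ every increment is truncated at that threshold, so the truncated-moment bound $\E{Y^p\mathbf 1\{Y\leqslant R\}}\leqslant pR^p\int_0^1t^{p-1}\PP\left(Y>Rt\right)\mathrm dt$ produces an integral over $[0,1]$ only; both halves then recombine, after replacing $n$ by the dyadic scale $2^N\leqslant n<2^{N+1}$, into the sum appearing in \eqref{eq:tightness_stationary_sequences}. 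A minor further remark: the union bound over $j$ at a single fixed threshold $\e$ suffices, since the hypothesis already carries the sum over $j$; your device of summable thresholds $\e_j$ is unnecessary and would only weaken the per-level estimates you need.
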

\begin{proof}
 Assume that $f$ satisfies \eqref{eq:tightness_stationary_sequences}.
 We have to show that $(W_{f,n})$ satisfies the conditions of 
 Theorem~\ref{thm:NSC_tightness}. 
First we check its condition (i). {Since 
$$
||W_{f,n}||_p\le \sup_{0\le t\le 1}|W_{f, n}(t)|=n^{-1/2}\max_{0\leqslant t\leqslant 1}|S_{f,k}|,
$$
the proof of (i) reduces to 
\begin{equation}\label{red_1}
  \lim_{b\to +\infty}\sup_{N\geqslant 1}\PP\Big(2^{-N/2}
  \max_{1\leqslant k\leqslant 2^N}\abs{S_{f,k}} >b \Big)=0.
 \end{equation}
 Notice that \eqref{eq:tightness_stationary_sequences} implies 
 (by considering the term of index $J$ in the sum) that 
 $$
  \lim_{J\to +\infty}\limsup_{N\to +\infty}2^J
  \PP\left(2^{-(N-J)/2}
 \max_{1\leqslant k\leqslant 2^{N-J}}\left|S_{f,k}\right|>\frac{1}{2}
2^{{J}/{q(p,\alpha)}}\right)=0,
$$ 
and consequently 
$$
\lim_{J\to +\infty} \limsup_{N\to +\infty}2^J
  \PP\left(2^{-N/2}
 \max_{1\leqslant k\leqslant 2^{N}}\left|S_{f,k}\right|>
2^{{J}/{q(p,\alpha)}}\right)=0.
$$ 
For a fixed $\varepsilon$, we choose $J_0$ such that 
$$ 
 \limsup_{N\to +\infty}2^{J_0}
  \PP\left(2^{-N/2}
 \max_{1\leqslant k\leqslant 2^{N}}\left|S_{f,k}\right|>
2^{{J_0}/{q(p,\alpha)}}\right)< 2\varepsilon. 
$$ 
There exists an integer $N_0$ such that for $N\geqslant N_0$, 
$$
\PP\left(2^{-N/2}
 \max_{1\leqslant k\leqslant 2^{N}}\left|S_{f,k}\right|>
2^{{J_0}/{q(p,\alpha)}}\right)< \varepsilon. 
$$
Since $\sup_{N\leqslant N_0}\PP\left(2^{-N/2}
 \max_{1\leqslant k\leqslant 2^{N}}\left|S_{f,k}\right|>
b\right)\to 0$ as $b$ goes to infinity, we can choose $b'_0$ such that 
$\max_{N\leqslant N_0}\PP\left(2^{-N/2}
 \max_{1\leqslant k\leqslant 2^{N}}\left|S_{f,k}\right|>
b'_0\right)<\varepsilon$. Taking $b_0:=\max\prt{2^{{J_0}/{q(p,\alpha)}}/2, 
b'_0}$, we have for $b\geqslant b_0$, 
$$
 \sup_{N\geqslant 1}\PP\left(2^{-N/2}
 \max_{1\leqslant k\leqslant 2^{N}}\left|S_{f,k}\right|>
b\right)<\varepsilon,
$$ 
which proves (\ref{red_1}) and the same time  (i) of Theorem~\ref{thm:NSC_tightness}.
}

Now, let us  prove condition (ii) of Theorem~\ref{thm:NSC_tightness}.  
 Since 
 \begin{align*}
  \sum_{j=J}^N2^{j}&\int_0^1x^{p-1}\PP\left(2^{-(N-j)/2}
 \max_{1\leqslant k\leqslant 2^{N-j}}\left|S_{f,k}\right|>x
2^{{j}/{q(p,\alpha)}}\right)\mathrm dx
\\
 \geqslant &2^N\int_{1/2}^{3/4}x^{p-1}\PP\left(
\left|f\right|>x
2^{{N}/{q(p,\alpha)}}\right)\mathrm dx \\
\geqslant&{ 2^N\prt{1/2}^{p-1}\PP\left(
\left|f\right|>\frac 34
2^{{N}/{q(p,\alpha)}}\right),}
 \end{align*}
 we infer that condition \eqref{eq:tightness_stationary_sequences} implies 
 \begin{equation}\label{eq:tail_condition}
  \lim_{t \to + \infty}t^{q(p,\alpha)}
\PP\ens{\abs f>t}=0.
  \end{equation}
We first prove that for each positive $\varepsilon$, 
$$
 \limsup_{n\to\infty} 
\PP\Big(\sup_{j\geqslant \lfloor \log n\rfloor+1}2^{j\alpha-j/p }\Big(\sum_{r\in D_j}
\abs{\lambda_r(W_{f,n})}^p\Big)^{1/p}>\e\Big)= 0.
$$ 
We shall actually prove that 
\begin{equation}\label{eq:remove_j_geq_log_n}
\limsup_{n\to\infty} 
\PP\Big(\sup_{j\geqslant \lfloor \log n\rfloor+1}2^{j\alpha-j/p }\Big(\sum_{r\in D_j}
\abs{W_{f,n}(r^+)-W_{f,n}(r) }^p\Big)^{1/p}>\e\Big)= 0,
\end{equation}
since the differences $W_{f,n}(r)-W_{f,n}(r^-)$ can be treated similarly. 
To this aim, define for a fixed $j\geqslant \lfloor \log n\rfloor+1$ the sets 
$$ 
 I_k:=\ens{r\in D_j, \ \frac kn\leqslant r<r^+<\frac{k+1}n},\quad 0\leqslant k\leqslant n-1;
$$ 
and
$$ J_k:=\ens{r\in D_j, \frac kn\leqslant r <\frac{k+1}n\leqslant r^+
 < \frac{k+2}n},\quad 0\leqslant k\leqslant n-2.
$$ 
Assume that $r$ belongs to $I_k$. Then $\lfloor nr\rfloor=
\lfloor nr^+\rfloor=k$. We thus have 
\begin{equation}\label{intermediate_step_proof_tightness_criterion1}
 \abs{W_{f,n}(r^+)-W_{f,n}(r) }=\abs{\prt{nr^+-nr}f\circ \TT^k/\sqrt n   } 
 =n^{1/2}2^{-j}\abs{f\circ \TT^k}.
\end{equation}
Now, assume that $r$ belongs to $J_k$. Then 
\begin{align*}
 \abs{W_{f,n}(r^+)-W_{f,n}(r) }&\leqslant 
 \abs{W_{f,n}(r^+)-W_{f,n}((k+1)/n)   }+ \abs{W_{f,n}((k+1)/n)-W_{f,n}(r)}\\
 &=n^{-1/2}\prt{\abs{\prt{nr^+-(k+1)} f\circ \TT^{k+1}}
 +\abs{ f\circ \TT^k-\prt{nr-k}f\circ \TT^k }},
\end{align*}
and using the fact that $0\leqslant nr^+-(k+1)\leqslant nr^+-nr=n2^j$ and 
$0\leqslant 1-\prt{nr-k} \leqslant\prt{nr^+-k}-\prt{nr-k}=n2^{-j} $, we get 
\begin{equation}\label{intermediate_step_proof_tightness_criterion2}
 \abs{W_{f,n}(r^+)-W_{f,n}(r) } \leqslant \sqrt n2^{-j}
 \prt{\abs{f\circ \TT^k}+\abs{f\circ \TT^{k+1} }}.
\end{equation}
Since $D_j=\{1-2^{-j}\}\cup\bigcup_{k=0}^{n-1}I_k\cup
\bigcup_{k=0}^{n-2}J_k$ and for $r=1-2^{-j}$, 
$$ \abs{W_{f,n}(r^+)-W_{f,n}(r) } 
\leqslant n^{-1/2}2^{-j}\abs{f\circ \TT^n},
$$
we have in view of 
\eqref{intermediate_step_proof_tightness_criterion1} and 
\eqref{intermediate_step_proof_tightness_criterion2}, 
\begin{align*}
 \Big(\sum_{r\in D_j}
\abs{W_{f,n}(r^+)-W_{f,n}(r) }^p\Big)^{1/p}
&\leqslant n^{-1/2}2^{-j}\abs{f\circ \TT^n} 
+n^{1/2}2^{-j}\Big(\sum_{k=0}^{n-1} \operatorname{Card}\prt{I_k} \abs{f\circ \TT^k}^p
\Big)^{1/p}\\
&+n^{1/2}2^{-j}\Big(\sum_{k=0}^{n-2} \operatorname{Card}\prt{J_k}
\prt{ \abs{f\circ \TT^k}+\abs{f\circ \TT^{k+1} } }^p
\Big)^{1/p}.
\end{align*}
We now have to bound $\operatorname{Card}\prt{I_k}$ and $\operatorname{Card}\prt{J_k}$. 
Let $1\leqslant l\leqslant 2^{j}$. If $(2l-1)2^{-j}$ belongs to $I_k$, then 
we should have $2^jk/n\leqslant 2l-1<2l<2^j(k+1)/n$ hence 
$2^{j-1}k/n\leqslant l<2^{j-1}(k+1)/n$ and it follows that $I_k$ cannot have 
more than $ 2^j/n$ elements. If $(2l-1)2^{-j}$ belongs to $J_k$, then 
we should have $2^j(k+1)/n\leqslant 2l<2^j(k+2)/n$ and we deduce that the 
cardinal of $J_k$ does not exceed $2^j/n$. Therefore, we have 
\begin{equation*}
 \Big(\sum_{r\in D_j}
\abs{W_{f,n}(r^+)-W_{f,n}(r) }^p\Big)^{1/p}
\leqslant 3n^{1/2}2^{-j}\prt{2^j/n}^{1/p}\Big(\sum_{k=0}^{n}  \abs{f\circ \TT^k}^p
\Big)^{1/p}.
\end{equation*}
and 
\begin{multline*}
 \sup_{j\geqslant \lfloor \log n\rfloor+1}2^{j\alpha-j/p }\Big(\sum_{r\in D_j}
\abs{\lambda_r(W_{f,n})}^p\Big)^{1/p} \leqslant  
 3\sup_{j\geqslant \lfloor \log n\rfloor+1}n^{1/2}2^{-j}2^{j\alpha-j/p}
 \Big(\sum_{k=0}^{n}  \abs{f\circ \TT^k}^p
\Big)^{1/p}\\
\leqslant n^{-1/2+\alpha-1/p}
 \Big(\sum_{k=0}^{n}  \abs{f\circ \TT^k}^p
\Big)^{1/p}=n^{-1/q(p,\alpha)}
 \Big(\sum_{k=0}^{n}  \abs{f\circ \TT^k}^p\Big)^{1/p}.
\end{multline*}
We thus have to prove that the latter term goes to zero in probability 
as $n$ goes to infinity. 


\begin{Lemma}\label{lem:LLN}
 Let $f$ be a function such that \eqref{eq:tail_condition} holds. Then 
$$n^{-1/q(p,\alpha)}
 \Big(\sum_{k=0}^{n}  \abs{f\circ \TT^k}^p
\Big)^{1/p}\pc 0.
$$
\end{Lemma}
\begin{proof}
For fixed $\delta$ and $n$, define $f':=f\mathbf 1\big(\abs f\leqslant \delta 
n^{1/q(p,\alpha)}\big)$ and $f''=f-f'$. By Markov's inequality, we have with $q=q(p, \alpha)$
\begin{align}\label{intermediate_step_proof_tightness_criterion3a}
 \PP\Big(n^{-1/q}
 \Big(\sum_{k=0}^{n}  \abs{f'\circ \TT^k}^p
\Big)^{1/p}>\e \Big)&\leqslant 
\e^{-p} n^{-p/q} \sum_{k=0}^{n}  \EE{\abs{f'\circ \TT^k}^p}\nonumber\\
& \leqslant 2\e^{-p} n^{1-p/q} \EE{\abs{f'}^p}.
\end{align}
Now, note that 
\begin{align*}
 \EE{\abs{f'}^p}&=p\int_0^{ \delta n^{1/q} }
 t^{p-1}\PP(\abs{f'}>t)\mathrm dt
\leqslant p\int_0^{ \delta n^{1/q} }
 t^{p-q-1} \mathrm dt \cdot 
 \sup_{s>0}s^{q} \PP\ens{\abs f>s}\\
 &=\frac{p}{p-q}\delta^{(p-q)/{q}}    
 n^{{(p-q)}/{q}}\cdot 
 \sup_{s>0}s^{q} \PP\ens{\abs f>s},
\end{align*}
hence by \eqref{intermediate_step_proof_tightness_criterion3a}, we have 
\begin{equation}\label{intermediate_step_proof_tightness_criterion4}
 \PP\Big(n^{-1/q}
 \Big(\sum_{k=0}^{n}  \abs{f'\circ \TT^k}^p
\Big)^{1/p}>\e \Big)\leqslant \e^{-p}\frac{2p}{p-q}\delta^{{(p-q)}/{q}}.
\end{equation}
Notice also that 
\begin{equation}\label{intermediate_step_proof_tightness_criterion5}
 \PP\Big(n^{-1/q}
 \Big(\sum_{k=0}^{n}  \abs{f''\circ \TT^k}^p
\Big)^{1/p}>\e \Big)\leqslant (n+1)\PP(\abs f> \delta n^{1/q}). 
\end{equation}
The combination of \eqref{intermediate_step_proof_tightness_criterion4} and 
\eqref{intermediate_step_proof_tightness_criterion5} gives 
$$ \limsup_{n\to + \infty}\PP\Big(n^{-1/q}
 \Big(\sum_{k=0}^{n}  \abs{f\circ \TT^k}^p
\Big)^{1/p}>\e \Big)\leqslant\e^{-p}\frac{2p}{p-q}
\delta^{{(p-q)}/{q}} 
$$ 
and since $\delta$ is arbitrary and $p>q$, this concludes the proof of Lemma~\ref{lem:LLN}.
\end{proof}

An application of the Lemma~\ref{lem:LLN} gives \eqref{eq:remove_j_geq_log_n}. 
Now, we have to prove that 
$$ 
\lim_{J\to \infty}\limsup_{n\to\infty} 
\PP\Big(\sup_{J \leqslant j\leqslant \lfloor \log n\rfloor}
2^{j\alpha-j/p }\Big(\sum_{r\in D_j}
\abs{W_{f,n}(r^+)-W_{f,n}(r)}^p\Big)^{1/p}>\e\Big)= 0.
$$ 
It suffices to prove that 
\begin{equation}\label{reduction_1}
 \lim_{J\to \infty}\limsup_{n\to\infty} 
\PP\Big(n^{-1/2}\sup_{J \leqslant j\leqslant \lfloor \log n\rfloor}
2^{j\alpha-j/p }\Big(\sum_{r\in D_j}
\abs{S_{\lfloor nr^+\rfloor}-S_{\lfloor nr\rfloor}}^p\Big)^{1/p}>\e \Big)= 0.
\end{equation}
Indeed, we have 
$$ 
\abs{W_{f,n}(r^+)-W_{f,n}(r)} \leqslant n^{-1/2}
\prt{\abs{S_{f,\lfloor nr^+\rfloor}-S_{f,\lfloor nr\rfloor}}
 +\abs{f\circ \TT^{\lfloor nr^+\rfloor }}+\abs{f\circ \TT^{\lfloor nr\rfloor }}},
$$ 
and for $j\leqslant \lfloor \log n\rfloor$, $2^j\leqslant n$, so that 
the set $\{\lfloor nr^+\rfloor , \lfloor nr\rfloor , r\in D_j\}$ consists of distinct elements. Therefore, 
{{$$
\sup_{1 \leqslant j\leqslant \lfloor \log n\rfloor}2^{j\alpha-j/p }\Big(\sum_{r\in D_j}
\Big[n^{-1/2}
\abs{f\circ \TT^{\lfloor nr^+\rfloor }}+\abs{f\circ \TT^{\lfloor nr\rfloor }}\Big]^p\Big)^{1/p}
>\e\Big)\leqslant 
2n^{-1/q} \Big(\sum_{k=0}^{n}\abs{f\circ \TT^k}  \Big)^{1/p},
$$
}}
and this quantity goes to zero in probability by Lemma~\ref{lem:LLN}.
The proof of (\ref{reduction_1}) reduces to establish that for each positive $\varepsilon$, 
 \begin{equation}\label{eq:sufficient_condiiton_proof_corollary}
  \lim_{J\to +\infty}\lim_{n\to +\infty}
  \sum_{j=J}^{\lfloor\log n\rfloor }\PP\left(A_{n,j}\right)=0,
 \end{equation}
where 
$$
 A_{n,j}:=\left\{\sum_{l=1}^{2^{j-1}}\left|S_{\lfloor n2l2^{-j}\rfloor }(f)
 -S_{\lfloor n(2l-1)2^{-j}\rfloor }(f)\right|^p>\varepsilon n^{p/2}2^{j(1-p\alpha)}\right\}.
$$ 
We now bound $\PP\left(A_{n,j}\right)$ by splitting the probability over the set 
$$
 B_{n,j}:=\bigcup_{l=1}^{2^{j-1}}\left\{\left|S_{f, \lfloor n2l2^{-j}\rfloor }
 -S_{f, \lfloor n(2l-1)2^{-j}\rfloor }\right|>n^{1/2}2^{j(1/p-\alpha)}\right\}.
$$ 
One bounds $\PP\left(A_{n,j}\cap B_{n,j}\right)$ by $\PP(B_{n,j})$, which can in turn be bounded by 
$$
 \sum_{l=1}^{2^{j-1}}\PP\left(\left|S_{f,\lfloor n2l2^{-j}\rfloor }
 -S_{f,\lfloor n(2l-1)2^{-j} \rfloor}\right|>n^{1/2}2^{j(1/p-\alpha)}\right)
$$ 
and 
thanks to stationarity and the fact that
\begin{equation}\label{eq:bound_of_dyadics}
 \lfloor n2l2^{-j}\rfloor -
\lfloor n(2l-1)2^{-j}\rfloor \leqslant n2^{-j}+1\leqslant 2n2^{-j},
\end{equation}
 we obtain 
\begin{align}
\PP\left(A_{n,j}\cap B_{n,j}\right)&\leqslant 
2^{j-1}\PP\left(\max_{1\leqslant k\leqslant \lfloor 2n2^{-j}\rfloor }
\left|S_{f,k}\right|>n^{1/2}2^{j(1/p-\alpha)}\right)\nonumber\\
&\leqslant 2^{j-1}2^{p-1}\int_{1/2}^1t^{p-1}\PP\left(\max_{1\leqslant k\leqslant \lfloor 2n2^{-j}\rfloor }
\left|S_{f,k}\right|>tn^{1/2}2^{j(1/p-\alpha)}\right)\mathrm{d}t.\label{eq:bound_Anj_Bnj} 
\end{align}
Now, in order to bound $\PP\left(A_{n,j}\cap B_{n,j}^c\right)$, we start by the pointwise inequalities 
\begin{align*}
\varepsilon n^{p/2}&2^{j(1-p\alpha)}\mathbf 1(A_{n,j}\cap B_{n,j}^c)
\leqslant \sum_{l=1}^{2^{j-1}}\left|S_{f,\lfloor n2l2^{-j}\rfloor }
 -S_{f,\lfloor n(2l-1)2^{-j}\rfloor }\right|^p\mathbf 1(A_{n,j}\cap B_{n,j}^c)\\
 &\leqslant \sum_{l=1}^{2^{j-1}}\left|S_{f,\lfloor n2l2^{-j}\rfloor }
 -S_{f,\lfloor n(2l-1)2^{-j}\rfloor }\right|^p 
\mathbf 1\left\{
 \left|S_{f,\lfloor n2l2^{-j}\rfloor }
 -S_{f,\lfloor n(2l-1)2^{-j}\rfloor }\right|\leqslant n^{1/2}2^{j(1/p-\alpha)}\right\}.
\end{align*}
Integrating and using the fact that for a non-negative random variable $Y$ and a positive 
$R$, 
$$ \E{Y^p\mathbf 1\left\{Y\leqslant R\right\} }
 =pR^p\int_0^1t^{p-1}\PP\left(Y>Rt\right)\mathrm dt,
$$ 
we derive by stationarity and \eqref{eq:bound_of_dyadics} that 
\begin{equation}\label{eq:bound_Anj_Bnj_complement}
 \PP\left(A_{n,j}\cap B_{n,j}^c\right)\leqslant \frac{p}{\varepsilon}2^{j-1}
 \int_0^1t^{p-1}
 \PP\left(\max_{1\leqslant k\leqslant \lfloor 2n2^{-j}\rfloor }
\left|S_{f,k}\right|>tn^{1/2}2^{j(1/p-\alpha)}\right)\mathrm dt.
\end{equation}
Let us denote by $K$ a constant depending only on $p$ and $\varepsilon$
which may change from line to line. 
By \eqref{eq:bound_Anj_Bnj} and \eqref{eq:bound_Anj_Bnj_complement}, we derive that 
\begin{equation*}
 \sum_{j=J}^{\lfloor \log n\rfloor }\PP\left(A_{n,j}\right)
 \leqslant K\sum_{j=J}^{\lfloor \log n\rfloor }2^{j}
 \int_{0}^1t^{p-1}\PP\left(\max_{1\leqslant k\leqslant \lfloor n2^{1-j}\rfloor }
\left|S_{f,k}\right|>tn^{1/2}2^{j(1/p-\alpha)}\right)\mathrm dt.
\end{equation*}
If $2^{N}\leqslant n<2^{N+1}$, then we have 
\begin{equation*}
 \sum_{j=J}^{\lfloor \log n\rfloor }\PP\left(A_{n,j}\right)
 \leqslant K\sum_{j=J}^{N}2^{j}
 \int_{0}^1t^{p-1}\PP\left(\max_{1\leqslant k\leqslant 2^{N+2-j}}
\left|S_{f,k}\right|>t2^{N/2}2^{j(1/p-\alpha)}\right)\mathrm dt,
\end{equation*}
hence 
\begin{equation*}
 \sum_{j=J}^{\lfloor \log n\rfloor }\PP\left(A_{n,j}\right)
 \leqslant K\sum_{j=J}^{N+2}2^{j}
 \int_{0}^1s^{p-1}\PP\left(\max_{1\leqslant k\leqslant 2^{N+2-j}}
\left|S_{f,k}\right|>s2^{\frac{N+2}2}2^{j(1/p-\alpha)}\right)\mathrm ds.
\end{equation*}
Splitting the integral into two parts, we infer that 
\begin{multline}\label{eq:intermediate_step_proof_tightness_criterion}
 \limsup_{n\to +\infty}\sum_{j=J}^{\lfloor \log n\rfloor }\PP\left(A_{n,j}\right)\\
 \leqslant K\limsup_{N\to +\infty}
 \sum_{j=J}^{N+2}2^{j}
 \int_{0}^1s^{p-1}\PP\prt{\max_{1\leqslant k\leqslant 2^{N+2-j}}
\left|S_{f,k}\right|>s2^{{(N+2)}/2}2^{j(1/p-\alpha)}}\mathrm ds
\end{multline}
and the limit of the latter quantity as $J$ goes to infinity is zero by 
\eqref{eq:tightness_stationary_sequences}. This concludes the proof of 
Theorem~\ref{cor:tightness_stationary_sequences}.
\end{proof}

\begin{Remark}
Using deviation inequalities, similar results as those found for the 
H\"olderian weak invariance principle for stationary mixing and 
$\tau$-dependent sequences in \cite{Giraudo2015} can be found 
for Besov spaces. 
\end{Remark}

%

{\begin{Lemma}[Proposition~3.5 in \cite{giraudo:hal-01280885} ]\label{lemma_maximal_inequality} For any $q>2$, there
  exists a constant $c(q)$ such that if $
  \prt{f\circ \TT^i}_{i\geqslant 0}$ is a martingale differences sequence 
  with respect to the filtration $\prt{\TT^{-i}\mathcal F_0}_{i\geqslant 0}$ then for each 
  integer $n\geqslant 1$, 
\begin{multline}\label{eq:tail_stationary_martingale_quadratic_variance}
  \PP\ens{\frac 1{\sqrt n}\max_{1\leqslant i\leqslant n}\abs{S_{f,i}} \geqslant t}
  \leqslant c(q)n
 \int_0^{1}\PP\ens{\abs f\geqslant \sqrt n ut}u^{q-1}\mathrm du+\\
 +c(q)\int_0^\infty
 \PP\ens{
 \left(\E{f^2\mid \TT\mathcal F_0}\right)^{1/2}>vt}
 \min\ens{v,v^{q-1}}\mathrm dv.
 \end{multline}
    \end{Lemma}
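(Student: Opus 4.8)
The inequality is a Fuk--Nagaev type deviation bound for the stationary martingale $\prt{S_{f,k}}$, and the natural strategy is a truncation argument that separates the contribution of the large increments (which produces the first integral, depending only on the tail of $\abs f$) from the contribution of the bulk, governed by the predictable quadratic characteristic $\sum_{i=0}^{n-1}\E{(f\circ\TT^i)^2\mid\TT^{-(i-1)}\cF_0}$ (which produces the second integral, depending on $h:=\E{f^2\mid\TT\cF_0}$). The plan is first to record, using the measure-preserving property and the martingale difference assumption, that the $i$-th conditional variance equals $h\circ\TT^i$, so that the quadratic characteristic is $V_n=\sum_{i=0}^{n-1}h\circ\TT^i$ with every summand distributed as $h$; this is what lets the second term be expressed through $h$ alone and removes the dependence on $n$.

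First I would fix the deviation level $t$ and truncate each martingale difference at a level $R$ proportional to $\sqrt n\, t$. The event that some increment exceeds $R$ is handled by a crude union bound, giving a term bounded by $n\,\PP\ens{\abs f\ge\sqrt n\, t}$; since $u\mapsto\PP\ens{\abs f\ge\sqrt n\, u\, t}$ is non-increasing on $(0,1]$, one has $n\,\PP\ens{\abs f\ge\sqrt n\, t}\leqslant q\, n\int_0^1\PP\ens{\abs f\ge\sqrt n\, u\, t}u^{q-1}\mathrm du$, so this contribution is absorbed into the first term on the right-hand side. On the complementary event one replaces $f\circ\TT^i$ by its truncation and recenters it to restore the martingale difference property; the resulting predictable compensator is controlled, via the elementary bound $\abs x\leqslant x^2/R$ on $\ens{\abs x>R}$, by $R^{-1}V_n$, and thus feeds into the conditional-variance term.

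It then remains to control the centered truncated martingale together with the compensator in terms of $V_n$, and here lies the main difficulty: obtaining the sharp two-regime weight $\min\ens{v,v^{q-1}}$ rather than a single power. This is precisely the Fuk--Nagaev interpolation between a Gaussian regime (small conditional variance, governed by a Bernstein--Freedman exponential inequality applied conditionally on $\ens{V_n\leqslant w}$ and producing, after integration over $w$, the $L^2$-type contribution weighted by $v$ for large $v$) and a heavy-tail regime (large conditional variance, handled by a Rosenthal/Markov estimate at the exponent $q/2$ and producing the $L^q$-type contribution weighted by $v^{q-1}$ for small $v$). The reduction of $V_n=\sum_{i=0}^{n-1}h\circ\TT^i$ to the single variable $h$ is then carried out by the triangle inequality in $L^{q/2}$ (legitimate since $q/2\geqslant 1$) together with stationarity, which yields $\norm{n^{-1}V_n}_{q/2}\leqslant\norm h_{q/2}$. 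The delicate point I expect to be the technical heart of the argument is that a single fixed truncation level is too coarse for the Bernstein step, so one must integrate a parametrized exponential inequality simultaneously over the truncation scale and the variance level $w$ and verify that the two regimes glue into the single weight $\min\ens{v,v^{q-1}}$; this is the computation carried out in \cite{giraudo:hal-01280885}.
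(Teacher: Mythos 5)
First, a point of comparison: the paper does not prove this lemma at all --- it is imported verbatim as Proposition~3.5 of \cite{giraudo:hal-01280885}, so there is no internal proof to measure you against. Your overall architecture (truncation at level proportional to $\sqrt n\,t$, a union bound absorbed into the first integral via monotonicity of the tail, recentering of the truncated differences, compensator bounded by $R^{-1}V_n$, and a Freedman--Fuk--Nagaev two-regime analysis of the quadratic characteristic) is indeed the standard route to inequalities of this type, and your identification of the conditional variances as $h\circ\TT^i$ with $h=\E{f^2\mid\TT\cF_0}$ is correct.

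There is, however, a genuine gap at the final reduction step, and it is not cosmetic. The second term of the claimed bound equals $c(q)\,\E{\Phi(h)}$, where $\Phi(x)=\int_0^\infty\mathbf 1\ens{\sqrt x>vt}\min\ens{v,v^{q-1}}\mathrm dv$ grows like $x^{q/2}/t^{q}$ for $x\leqslant t^2$ but only \emph{linearly}, like $x/t^2$, for $x\geqslant t^2$. Your proposed reduction of $V_n=\sum_{i=0}^{n-1}h\circ\TT^i$ to $h$ through the triangle inequality in $L^{q/2}$, i.e.\ $\norm{n^{-1}V_n}_{q/2}\leqslant\norm{h}_{q/2}$, can only deliver a pure $L^{q/2}$-type term, essentially $c(q)t^{-q}\E{h^{q/2}}$, which amounts to using the weight $v^{q-1}$ on all of $(0,\infty)$. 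That is strictly weaker than the statement: it is infinite unless $h\in L^{q/2}$, whereas the stated term is finite as soon as $f\in L^2$; worse, it would break the paper's application, since the proof of Theorem~\ref{thm:WIP_Besov_martingales_1} integrates the weight against $v^{-q(p,\alpha)}$ and relies precisely on $\int_0^{+\infty}\min\ens{v,v^{q-1}}v^{-q(p,\alpha)}\mathrm dv<\infty$ for $2<q(p,\alpha)<q$, an integral that diverges at infinity if the weight there is $v^{q-1}$ instead of $v$. The correct tool is not a moment inequality but convexity: $\Phi$ is non-decreasing and convex (its derivative is $\frac 1{2t^2}\min\ens{1,(\sqrt x/t)^{q-2}}$, non-decreasing because $q>2$), so Jensen's inequality applied to the average $n^{-1}V_n=\frac 1n\sum_{i=0}^{n-1}h\circ\TT^i$, together with stationarity, gives $\E{\Phi(n^{-1}V_n)}\leqslant\E{\Phi(h)}$ --- this is the reduction that preserves the $\min\ens{v,v^{q-1}}$ weight. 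A smaller imprecision: Freedman's inequality cannot be ``applied conditionally on $\ens{V_n\leqslant nw}$'', since conditioning on that event destroys the martingale property; the standard device is to stop the martingale at the first time the quadratic characteristic exceeds the prescribed level.
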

}


\subsection{Proof of Theorem  \ref{thm:WIP_Besov_martingales_1}}
 
Acording to Lemma \ref{remark_fdd_1} we need only to  prove 
that the sequence $\left(n^{-1/2}\zeta_{f,n}\right)_{n\geqslant 1}$ is 
tight in $B_{p,\alpha}^o$. 
To this aim, we have to check the condition \eqref{eq:tightness_stationary_sequences}
of Theorem~\ref{cor:tightness_stationary_sequences}. 
For  fixed $N$ and $J$ such that $N\geqslant J$, $j\in \ens{J,\dots,N}$ and 
 $x\in [0,1]$, we have by \eqref{eq:tail_stationary_martingale_quadratic_variance}
 of Lemma \ref{lemma_maximal_inequality}, 
\begin{multline}
 \PP\Big(2^{-\frac{N-j}2}
 \max_{1\leqslant k\leqslant 2^{N-j}}\left|S_{f,k}\right|>x
 2^{\frac{j}{q(p,\alpha)}}\Big)\\
 \leqslant c(q)2^{N-j}\int_0^1\PP\ens{\abs f\geqslant 2^{\frac{N-j}2}
 x 2^{\frac{j}{q(p,\alpha)}}u}u^{q-1}\mathrm du\\
 +c(q)\int_0^\infty
 \PP\ens{
 \left(\E{f^2\mid \TT\mathcal F_0}\right)^{1/2}>vx
 2^{\frac{j}{q(p,\alpha)}}}
 \min\ens{v,v^{q-1}}\mathrm dv,
\end{multline}
from which we infer that 
\begin{multline}\label{eq:checking_tightness_criterion_martingales}
 \sum_{j=J}^N2^{j}\int_0^1x^{p-1}\PP\Big(2^{-\frac{N-j}2}
 \max_{1\leqslant k\leqslant 2^{N-j}}\left|S_{f,k}\right|>x
2^{\frac{j}{q(p,\alpha)}}\Big)\mathrm dx\\
\leqslant  c(q)2^N\sum_{j=J}^N\int_0^1x^{p-1}\int_0^1\PP\ens{\abs f\geqslant 2^{\frac{N}2}
 x 2^{j\left(1/p-\alpha\right)}u}\mathrm dxu^{q-1}\mathrm du\\
 +c(q)\int_0^\infty\int_0^1x^{p-1}\sum_{j=J}^N2^j
 \PP\ens{
 \left(\E{f^2\mid \TT\mathcal F_0}\right)^{1/2}>vx
 2^{\frac{j}{q(p,\alpha)}}}
 \min\ens{v,v^{q-1}}\mathrm dv\mathrm dx\\
 =:A(N,J)+B(N,J).
\end{multline}
Using the fact that 
\begin{equation*}
 \PP\ens{\abs f\geqslant t}\leqslant t^{-q(p,\alpha)}
 \sup_{s\geqslant t}s^{q(p,\alpha)}\PP\ens{\abs f\geqslant s}, 
\end{equation*}
we derive the bound 
\begin{multline*}
 A(N,J)\leqslant 
 c(q)2^{N\left(1-q(p,\alpha)/2\right)}\sum_{j=0}^N2^{j\left(1/p-\alpha\right)}
 \int_0^1\int_0^1x^{p-q(p,\alpha)-1}u^{q-q(p,\alpha)-1}\\
 \sup\ens{s^{q(p,\alpha)}\PP\ens{\abs f\geqslant s},
 s\geqslant 2^{\frac{N}2}
 xu 2^{j\left(1/p-\alpha\right)}}\mathrm dx\mathrm du.
\end{multline*}
Since $j\leqslant N$, we have $2^{\frac{N}2}
 xu 2^{j\left(1/p-\alpha\right)}\geqslant xu2^{N/q(p,\alpha)}$ and 
 accounting the inequality
 $\sum_{j=0}^N2^{j\left(1/p-\alpha\right)}\leqslant 2^{N\left(1/p-\alpha\right)}/
 (1-2^{1/p-\alpha})$, we obtain 
\begin{equation*}
 A(N,J)\leqslant 
 c(q)
 \int_0^1\int_0^1x^{p-q(p,\alpha)-1}u^{q-q(p,\alpha)-1}
 \sup\ens{s^{q(p,\alpha)}\PP\ens{\abs f\geqslant s},
 s\geqslant 2^{\frac{N}{q(p,\alpha)}}xu }\mathrm dx\mathrm du.
\end{equation*} 
Since $p>q(p,\alpha)$ and $q>q(p,\alpha)$, the integral 
$\int_0^1\int_0^1u^{q-q(p,\alpha)-1}x^{p-q(p,\alpha)-1}\mathrm dx\mathrm du$ is convergent and 
we infer by the monotone convergence theorem that 
\begin{equation}\label{eq:limit_of_ANJ}
 \forall J\geqslant 1,\quad \lim_{N\to +\infty} A(N,J)=0.
\end{equation}
Now, in order to control $B(N,J)$, we use the following elementary inequality: if $Y$ is 
a non-negative random variable, then for each $J\geqslant 1$, 
\begin{equation*}
\sum_{j\geqslant J}2^j\PP\ens{Y\geqslant 
 2^{j/q(p,\alpha)}}\leqslant 2\E{Y^{q(p,\alpha)}
 \mathbf 1\ens{Y\geqslant 2^{J/q(p,\alpha)}} }.
\end{equation*}
Applying this to $Y:=\left(\E{f^2\mid \TT\mathcal F_0}\right)^{1/2}/(vx)$, 
we obtain that 
\begin{multline*}
 B(N,J)\leqslant c(q)\int_0^\infty\int_0^1x^{p-q(\alpha)-1}
 \E{ 
 \left[\E{f^2\mid \TT\mathcal F_0}\right]^{q/2} 
 \mathbf 1\ens{\left(\E{f^2\mid \TT\mathcal F_0}\right)^{1/2}
 \geqslant vx2^{J/q(p,\alpha)}}}\\
 \cdot\min\ens{v,v^{q-1}}v^{-q(p,\alpha)}\mathrm dv\mathrm dx.
\end{multline*}
Here again, we conclude by monotone convergence that 
\begin{equation}\label{eq:limit_of_BNJ}
\lim_{J\to +\infty} \sup_{N\geqslant 1}B(N,J)=0,
\end{equation}
since the integrals $\int_0^1x^{p-q(\alpha)-1}\mathrm dx$ and 
$\int_0^{+\infty}\min\ens{v,v^{q-1}}v^{-q(p,\alpha)}\mathrm dx$ are finite (as 
$q>q(\alpha)$).

Tightness of the sequence $\left(W_{f,n}\right)_{n\geqslant 1}$ now follows 
from Theorem~\ref{cor:tightness_stationary_sequences} and the combination 
of \eqref{eq:checking_tightness_criterion_martingales}, 
\eqref{eq:limit_of_ANJ} and \eqref{eq:limit_of_BNJ}.
Acounting Lemma \ref{remark_fdd_1} this concludes the 
proof of Theorem~\ref{thm:WIP_Besov_martingales_1}.


\subsection{Proof of Theorem~\ref{thm:WIP_Besov_iid_1}}
  Sufficiency of the condition is contained in 
  Theorem~\ref{thm:WIP_Besov_martingales_1}. 
  {Indeed, we represent the sequence $\prt{Y_j}_{j\geqslant 0}$ 
  by $\prt{f\circ \TT^j}_{j\in \Z}$, that is, $\prt{f\circ \TT^j}_{j\in \Z}$ is 
  an \iid\  sequence and  $Y_j$ has the 
  same distribution as $f\circ \TT^j$. To this aim we define $\Omega=\R^\Z, \cF=\cB^\Z$ 
  and $\PP=\PP_Y^\Z$, where $\PP_Y$ is the distribution of $Y_0$. 
  Let $f((\omega_j))=\omega_0$ for $(\omega_j)\in \R^Z$ and let $\TT:\Omega\to \Omega$ be 
  the shift operator: $\TT((\omega_j))=(\omega_{j+1})$. Next let 
  $\mathcal F_0:=\sigma\prt{f\circ \TT^j, j\leqslant 0 }$. Then $\TT\mathcal F_0
  \subset\mathcal F_0$ and $\E{f\mid \TT\mathcal F_0}
  =0$, since $f$ is independent of $\TT\mathcal F_0$ and centered. Moreover, 
  $\E{f^2\mid \TT\mathcal F_0}=\E{f^2}$, again by independence. Therefore, 
  condition (ii) of \eqref{thm:WIP_Besov_martingales_1} is satisfied. 
  Since $\mathcal I$ is trivial, $\E{f^2\mid \mathcal I}=\E{f^2}$, which gives 
  the convergence \eqref{eq:WIP_Besov_iid_1}.
  }
    
  Let us prove the necessity of \eqref{eq:tail_condition_iid} for the 
  invariance principle in $B_{p,\alpha}^o$. Since the space $B_{p,\alpha}^o$ 
  is a separable Banach space, the sequence $\prt{W_{2^n}}_{n\geqslant 1}$ is 
  tight in $B_{p,\alpha}^o$. Using  Theorem~1 in \cite{MR2483397}, we can find 
  for any positive $\eta$ a number $J_0$ such that 
  \begin{equation*}
   \limsup_{n\to\infty} 
\PP\Big(\sup_{j\geqslant J_0}2^{j\alpha-j/p }\Big(\sum_{r\in D_j}
\abs{W_{2^n}(r^+) -W_{2^n}(r)}^p\Big)^{1/p}>\e\Big)\leqslant \eta.
  \end{equation*}
Therefore, if $n$ is large enough, we have 
\begin{equation*}
\PP\Big(2^{n\alpha-n/p }\Big(\sum_{r\in D_n}
\abs{W_{2^n}\prt{r^+} -W_{2^n}(r)}^p\Big)^{1/p}>\e\Big)  \leqslant 2\eta.
\end{equation*}
Since 
\begin{equation*}
 \sum_{r\in D_n}
\abs{W_{2^n}(r^+) -W_{2^n}(r)}^p=
2^{-np/2}\sum_{l=1 }^{2^{n-1}} \abs{ S_{2l}-S_{2l-1}}^p
=2^{-np/2}\sum_{l=1 }^{2^{n-1}} \abs{X_{2l-1}}^p,
\end{equation*}
we have the convergence in probability of the sequence 
$\prt{2^{n\alpha-j/p } 2^{-np/2}\sum_{l=1 }^{2^{n-1}} \abs{X_{2l-1}}^p  }$ 
to $0$.
By \cite{MR2091561}, this implies that $ 2^{n\alpha-j/p } 2^{-np/2}\PP\ens{\abs{X_1}>2^n}\to 0$, hence 
\eqref{eq:tail_condition_iid} holds.
This ends the proof of Theorem~\ref{thm:WIP_Besov_iid_1}.

\subsection{Proof of Theorem  \ref{thm:counter_example_martingales}}
 
 We first start by a lemma which guarantees the lake of tightness of the 
 partial sum process.
 \begin{Lemma}\label{lem:non_tightness}
  Let $1/p<\alpha<1/2$ and let $f$ be a function such that there exist
  increasing sequences of real numbers $(n_l)_{l\geq 1}$ and 
  $(k_l)_{l\geq 1}$ satisfying the following properties: $k_l/n_l\to 0$ 
  as $l$ goes to infinity and 
  \begin{equation*}
   \inf_{l\geqslant 1}\PP \Big(\frac 1{n_l^{q(p,\alpha)}}\max_{
   1\leqslant k\leqslant k_l  } \frac 1{k^\alpha} \Big(\sum_{i=0}^{n_l-k}\left|
   S_{f,i+k}-S_{f,i}\right|^p\Big)^{1/p}  >1\Big)>0,
  \end{equation*}
  where $q(p,\alpha)$ is given by \eqref{q_alpha_p}.
  Then the sequence $(W_n(f))_{n\geqslant 1}$ is not tight in $B_{p,\alpha}^o$.
 \end{Lemma}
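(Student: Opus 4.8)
The plan is to establish non-tightness by contradiction with the tightness criterion of Theorem~\ref{thm:necessary_and_sufficient_condition_for_tightness}. Since restricting a tight family to a subsequence preserves tightness, it suffices to show that the subsequence $\prt{W_{f,n_l}}_{l\geqslant 1}$ fails that criterion, and I would do this by violating its condition $(ii)$: I will exhibit a fixed $\varepsilon_0>0$ for which $\sup_{n\geqslant 1}\PP\ens{\sup_{0<\delta\leqslant\delta_0}\delta^{-\alpha}\omega_p(W_{f,n},\delta)>\varepsilon_0}$ stays bounded away from $0$ as $\delta_0\to 0$. The whole point is to bound the $L_p$-modulus of smoothness of the polygonal process from below by the block-increment quantity appearing in the hypothesis.

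The key computation is a pointwise lower bound for the modulus. Fix $n$ and $h=k/n$ with $1\leqslant k$. For $t\in[i/n,(i+1)/n)$ one has $\lfloor nt\rfloor=i$ and $\lfloor n(t+h)\rfloor=i+k$, so writing $u=nt-i\in[0,1)$ and $D_i:=S_{f,i+k}-S_{f,i}$ one gets $\zeta_{f,n}(t+h)-\zeta_{f,n}(t)=(1-u)D_i+uD_{i+1}$, the affine interpolation between consecutive block increments. Using the elementary inequality $\int_0^1\abs{(1-u)a+ub}^p\,\mathrm du\geqslant c_p\max\ens{\abs a,\abs b}^p$ (obtained by integrating over $u\in[3/4,1]$, where the affine function stays above half its larger endpoint in absolute value) and summing over the intervals $[i/n,(i+1)/n)$, I obtain
\[
\omega_p(\zeta_{f,n},k/n)^p\geqslant \frac1n\sum_{i}\int_0^1\abs{(1-u)D_i+uD_{i+1}}^p\,\mathrm du\geqslant \frac{c_p}{n}\sum_{i=0}^{n-k}\abs{S_{f,i+k}-S_{f,i}}^p,
\]
the boundary term $D_{n-k}$ being recovered from the right endpoint of the last interval.

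Rescaling by $W_{f,n}=n^{-1/2}\zeta_{f,n}$ and inserting the factor $(k/n)^{-\alpha}=k^{-\alpha}n^{\alpha}$, the powers of $n$ combine as $n^{\alpha-1/2-1/p}=n^{-1/q(p,\alpha)}$, i.e. exactly the normalisation occurring in the hypothesis; thus for each $1\leqslant k\leqslant k_l$,
\[
(k/n_l)^{-\alpha}\omega_p(W_{f,n_l},k/n_l)\geqslant c_p^{1/p}\,n_l^{-1/q(p,\alpha)}k^{-\alpha}\Big(\sum_{i=0}^{n_l-k}\abs{S_{f,i+k}-S_{f,i}}^p\Big)^{1/p}.
\]
Taking the maximum over $1\leqslant k\leqslant k_l$ shows that, on the event appearing in the hypothesis, $\sup_{0<\delta\leqslant k_l/n_l}\delta^{-\alpha}\omega_p(W_{f,n_l},\delta)>c_p^{1/p}$. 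Because $k_l/n_l\to 0$, for any prescribed $\delta_0>0$ all the values $\delta=k/n_l$ with $k\leqslant k_l$ lie in $(0,\delta_0]$ once $l$ is large; combined with the positive infimum $\rho$ provided by the hypothesis, this yields $\sup_{n}\PP\ens{\sup_{0<\delta\leqslant\delta_0}\delta^{-\alpha}\omega_p(W_{f,n},\delta)>c_p^{1/p}}\geqslant \rho>0$ for every $\delta_0>0$. Hence condition $(ii)$ of Theorem~\ref{thm:necessary_and_sufficient_condition_for_tightness} fails with $\varepsilon_0=c_p^{1/p}$, and the sequence is not tight.

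The main obstacle I anticipate is the first step: proving the clean per-interval lower bound $\int_0^1\abs{(1-u)a+ub}^p\,\mathrm du\geqslant c_p\max\ens{\abs a,\abs b}^p$ with a constant independent of the possibly large and sign-changing increments $D_i$, and then checking that summing these local bounds over the sub-intervals $[i/n,(i+1)/n)$ genuinely reproduces the full block-increment sum $\sum_{i=0}^{n-k}\abs{D_i}^p$ without losing boundary terms. Everything else is bookkeeping of exponents and the elementary observation that non-tightness of a subsequence forces non-tightness of the whole family.
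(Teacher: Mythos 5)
Your proof is correct and follows essentially the same route as the paper's: both lower-bound the weighted modulus $\delta^{-\alpha}\omega_p(W_{f,n_l},\delta)$ at the scales $\delta=k/n_l\leqslant k_l/n_l$ by the block-increment statistic of the hypothesis, and then use $k_l/n_l\to 0$ together with the positive infimum to contradict the small-scale equicontinuity that tightness forces, via condition (ii) of Theorem~\ref{thm:necessary_and_sufficient_condition_for_tightness} (the paper phrases this same contradiction through extraction of a weakly convergent subsequence). Your affine-interpolation computation and the elementary bound $\int_0^1\abs{(1-u)a+ub}^p\,\mathrm{d}u\geqslant c_p\max\{\abs a,\abs b\}^p$ (stated up to the harmless symmetrization $u\mapsto 1-u$ when $\abs a>\abs b$, and a factor $\tfrac12$ absorbed into $c_p$ when recovering the full sum $\sum_{i=0}^{n-k}\abs{D_i}^p$) simply supply the details of the inequality the paper asserts with the parenthetical remark about restricting the supremum to $t$ of the form $k/n_l$.
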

  
  \begin{proof}
  If the sequence $\prt{W_n(f)}_{n\geqslant 1}$ was tight in $B_{p,\alpha}^o$, then 
  we would be able to extract a weakly convergence subsequence of 
  $\prt{W_{n_l}(f)}_{l\geqslant 1}$. Therefore, we can assume without loss of 
  generality  that $\prt{W_{n_l}(f)}_{l\geqslant 1}$ converges in distribution 
  in $B_{p,\alpha}$. Consequently, the sequence 
  $\prt{\sup_{\abs t\leqslant k_l/n_l}   t^{-\alpha}\omega_p\prt{W_{n_l},t}}_{l\geqslant 1}$ 
  should convergence to $0$ in probability as $l$ goes to infinity. But 
  \begin{equation*}
   \sup_{\abs t\leqslant k_l/n_l}   t^{-\alpha}\omega_p\prt{W_{n_l},t} 
   \geqslant 
   \frac{c(p)}{n_l^{q(p,\alpha)}}\max_{
   1\leqslant k\leqslant k_l  } \frac 1{k^\alpha} \Big(\sum_{i=0}^{n_l-k}\left|
   S_{f,i+k}-S_{f,i}\right|^p\Big)^{1/p}  
  \end{equation*}
  for some constant depending only on $p$ (this can be seen by 
  restricting the supremum over the $t$ of the form $k/n_l$ where 
  $1\leqslant k\leqslant k_l$).
  
  \end{proof}

 Let us recall the statement of Lemma~3.8 in \cite{MR1856684}.
 
 \begin{Lemma}\label{lem:positive_entropy}
  Let $(\Omega,\mathcal F,\PP,\TT)$ be an ergodic probability measure preserving system of 
  positive entropy. There exists two $\TT$-invariant sub-$\sigma$-algebras $\mathcal B$ and 
  $\mathcal C$ of $\mathcal F$ and a function $g\colon\Omega\to \R$ such that:
  \begin{itemize}
   \item the $\sigma$-algebras $\mathcal B$ and $\mathcal C$ are independent;
   \item the function $g$ is $\mathcal B$-measurable, takes the values $-1$, $0$ 
   and $1$, has zero mean and the process $(g\circ \TT^n)_{n\in\Z}$ is independent;
   \item the dynamical system $(\Omega,\mathcal C,\PP,\TT)$ is aperiodic.
  \end{itemize}
 \end{Lemma}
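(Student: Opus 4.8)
The plan is to deduce the statement from Sinai's factor theorem, which embeds Bernoulli shifts as factors of any ergodic system of positive entropy. The key idea is to realize both required independent pieces at once, by taking as target a single Bernoulli shift whose alphabet is a \emph{product} alphabet: the first coordinate will produce $g$ and the algebra $\mathcal B$, and the second coordinate will produce the aperiodic complement $\mathcal C$.

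First I would fix the two coordinate laws. Let $\nu$ be the symmetric law on $\{-1,0,1\}$ with $\nu(\{1\})=\nu(\{-1\})=a/2$ and $\nu(\{0\})=1-a$ for some small $a\in(0,1)$, and let $\rho$ be a nondegenerate law on $\{0,1\}$ with $\rho(\{1\})=b\in(0,1/2)$. Their Shannon entropies $H(\nu)$ and $H(\rho)$ both tend to $0$ as $a,b\to 0$, so I may pick $a,b$ small enough that $H(\nu)+H(\rho)<h(\TT)$, where $h(\TT)>0$ denotes the Kolmogorov--Sinai entropy of the system. The product Bernoulli shift on $\prt{\{-1,0,1\}\times\{0,1\}}^{\Z}$ with per-coordinate law $\nu\otimes\rho$ is again a Bernoulli shift, of entropy $H(\nu)+H(\rho)<h(\TT)$. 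Since the system is ergodic, Sinai's factor theorem yields an equivariant factor map $\pi\colon\Omega\to\prt{\{-1,0,1\}\times\{0,1\}}^{\Z}$, intertwining $\TT$ with the shift $S$ and satisfying $\pi_*\PP=\nu^{\otimes\Z}\otimes\rho^{\otimes\Z}$.

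Writing $\Pi_1,\Pi_2$ for the projections of the target onto its first and second coordinate sequences, I would set $\mathcal B:=\pi^{-1}\sigma(\Pi_1)$ and $\mathcal C:=\pi^{-1}\sigma(\Pi_2)$, and $g:=x_0\circ\Pi_1\circ\pi$ where $x_0$ is the time-$0$ symbol map. From $\pi\circ\TT=S\circ\pi$ and $S^{-1}\sigma(\Pi_i)=\sigma(\Pi_i)$ one gets $\TT^{-1}\mathcal B=\mathcal B$ and $\TT^{-1}\mathcal C=\mathcal C$, so both are $\TT$-invariant. The three assertions are then read off from the product structure of $\pi_*\PP$. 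Independence of $\mathcal B$ and $\mathcal C$ follows from the independence of the two coordinate algebras under $\nu^{\otimes\Z}\otimes\rho^{\otimes\Z}=\pi_*\PP$. The function $g$ is $\mathcal B$-measurable with values in $\{-1,0,1\}$; as $\pi_*\PP$ projects onto $\nu^{\otimes\Z}$ in the first coordinate, $(g\circ\TT^n)_{n\in\Z}$ is \iid\ with law $\nu$, hence independent, and $\E{g}=a/2-a/2=0$. Finally $(\Omega,\mathcal C,\PP,\TT)$ is isomorphic to the Bernoulli shift $\prt{\{0,1\}^{\Z},\rho^{\otimes\Z},S}$, which for $0<b<1$ is mixing and in particular aperiodic.

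The only substantial ingredient is Sinai's factor theorem; everything else is verification against the product measure. The point I would emphasize is that the Bernoulli entropy can be taken arbitrarily small, which is exactly what permits a single small product Bernoulli factor to supply simultaneously the symmetric $\{-1,0,1\}$-valued noise $g$ and a nondegenerate, hence aperiodic, independent complement $\mathcal C$, no matter how small the positive entropy $h(\TT)$ happens to be.
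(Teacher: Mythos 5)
Your proof is correct. Note, however, that the paper does not actually prove this lemma: it is quoted verbatim from Lemma~3.8 of Lesigne and Voln\'y \cite{MR1856684}, and the text following the statement is only that citation, so there is no internal proof to compare against. Your argument --- Sinai's factor theorem applied to a Bernoulli shift with product alphabet $\{-1,0,1\}\times\{0,1\}$ and small per-coordinate entropy, with $\mathcal B$ and $\mathcal C$ defined as the pullbacks of the two coordinate algebras and $g$ as the pulled-back time-zero symbol --- is in substance the same construction used in that reference, and all your verifications go through: $\TT$-invariance follows from equivariance of the factor map together with shift-invariance of each coordinate algebra, independence of $\mathcal B$ and $\mathcal C$ follows from the product structure of the image measure, $(g\circ \TT^n)_{n\in\Z}$ is i.i.d.\ with the symmetric law $\nu$ (hence centered), and the factor system on $\mathcal C$ is a nondegenerate Bernoulli shift, which is nonatomic and ergodic, hence aperiodic. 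The one point worth flagging is that Sinai's factor theorem requires the underlying space to be a standard (Lebesgue) probability space; this hypothesis is not written in the lemma, but it is implicit in the paper's framework exactly as it is in Lesigne--Voln\'y, so it is a shared caveat rather than a gap in your argument. Your choice of small parameters $a,b$ to force $H(\nu)+H(\rho)<h(\TT)$ is precisely the reason the lemma allows $g$ to take the value $0$ rather than only $\pm 1$ (the paper's later simplification $\PP\ens{g=1}=\PP\ens{g=-1}=1/2$ is a separate convenience that would require $h(\TT)>\log 2$ to be literally realized this way).
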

 
 In the sequel, we shall assume for simplicity that $\PP\ens{g=1}
 =\PP\ens{g=-1}=1/2$.

 The construction follows the lines of that of Theorem~2.1 in \cite{MR3426520}.
 We define three increasing sequences of positive integers $\left(I_l\right)_{l\geqslant 1}$, 
 $\left(J_l\right)_{l\geqslant 1}$, $\left(N_l\right)_{l\geqslant 1}$ and 
 a sequence of real numbers $\left(L_l\right)_{l\geqslant 1}$ such that 
 $$ \sum_{l=1}^{\infty}\frac 1{L_l}<\infty \label{lac0} \mbox{ and  }
$$ 
 $L_l$ is a continuity point of the cumulative distribution function of 
 the random variable $2^{-1}Y_{1/2,1}$, which is defined in
 \eqref{eq:example_of_convergence_of_functionals}. Now, we define a sequence of real numbers 
  $\left(J_l\right)_{l\geqslant 1}$ in such a way that for each $l\geqslant 1$, 
  \begin{equation}\label{eq:constrainJl}
  \abs{J_l\PP\prt{ \ens{ Y_{1/2,1}>2L_l } } -7/8}\leqslant 1/16.
  \end{equation}
 Now, by Proposition~\ref{prop:example_of_functional}, we can choose for each $l\geqslant 1$ 
 an integer 
 $I_l$ such that 
 \begin{equation}\label{eq:choice_of_Il}
\forall n\geqslant I_l, \quad \abs{\PP\prt{\ens{Y_{n,1/2,1}(g) >2L_l}}
-\PP\prt{\ens{Y_{1/2,1} >2L_l}} } \leqslant \frac 1{lJ_l}.
 \end{equation}
 Let 
 $K_l:=2^{I_l+J_l}$.
 We define the sequence $\prt{N_l}_{l\geqslant 1}$ in such a way that for 
 each $l\geqslant 1$,
 \begin{equation}\label{eq:first_lacunarity_condition_Nl}
 \frac 1{N_l^{1/q(p,\alpha)}}K_l^{1-\alpha}
 \sum_{u=1}^{l-1}\left(\frac{N_u}{2^{I_u}}\right)^{1/q(p,\alpha)}\leqslant 1\mbox{ and }
 \end{equation}
 \begin{equation}\label{eq:second_lacunarity_condition_Nl}
  N_l\sum_{u=l+1}^{+\infty}K_u/N_u<\frac 1{16}.
 \end{equation}

 Using Rokhlin's lemma, we can find for any integer $l\geqslant 1$ a measurable set $
 C_l\in\mathcal C$ such that the sets $\TT^{i}C_l$, $i=0,\dots,N_l-1$ are 
 pairwise disjoint and $\PP\left(\bigcup_{i=0}^{N_l-1}\TT^{i}C_l\right)>1/2$.
 We define for $l\geq 1$ 
 \begin{equation}\label{eq:defintion_of_f_l_counter_example}
  f_l:=\frac 1{L_l}\sum_{j=1}^{J_l}\left(\frac{N_l}{2^{I_l+j}}\right)^{1/q(p,\alpha)}
  \mathbf 1\Big(\bigcup_{i=2^{J_l+j}+1}^{2^{J_l+j+1}}\TT^{N_l-i}C_l\Big)\mbox{ and }
 \end{equation}
 \begin{equation}\label{eq:definition_of_m_counter_example}
  f:=\sum_{l=1}^{+\infty}f_l, \quad m:=g\cdot f.
 \end{equation}
 Note that $\PP\prt{f_l\neq 0}\leqslant K_l/N_l$, hence by 
 \eqref{eq:second_lacunarity_condition_Nl} and the Borel-Cantelli lemma, 
 the function $f$ is well defined almost everywhere.
 Define 
 \begin{equation*}
  \mathcal F_0:=\sigma\left(g\circ \TT^i,i\leqslant 0\right)\vee 
  \mathcal C.
 \end{equation*}

 \begin{Proposition} 
  The $\sigma$-algebra $\mathcal F_0$ satisfies $\TT\mathcal F_0\subset\mathcal F_0$. 
  The function $m$ defined by \eqref{eq:defintion_of_f_l_counter_example} and 
  \eqref{eq:definition_of_m_counter_example} is 
  $\mathcal F_0$-measurable and 
  satisfies $\EE\left[m\mid \TT\mathcal F_0\right]=0$ and 
  $\lim_{t \to  + \infty}t^{q(p,\alpha)}\PP\left\{  
  \left|m\right|>t\right\}=0$.
 \end{Proposition}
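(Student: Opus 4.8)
The plan is to verify the four assertions in turn; throughout write $q:=q(p,\alpha)$ as in \eqref{q_alpha_p} and note that $q>2$ since $\alpha>1/p$. I would first dispose of the two measurability statements. Because $\TT$ is a bimeasurable bijection, the map $A\mapsto\TT A$ is a $\sigma$-algebra isomorphism and commutes with countable joins; since $\TT\,\sigma(g\circ\TT^i,\ i\leqslant 0)=\sigma(g\circ\TT^{i-1},\ i\leqslant 0)=\sigma(g\circ\TT^i,\ i\leqslant -1)$ and $\TT\mathcal C=\mathcal C$ by $\TT$-invariance of $\mathcal C$, I get $\TT\mathcal F_0=\sigma(g\circ\TT^i,\ i\leqslant -1)\vee\mathcal C\subset\mathcal F_0$. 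For the $\mathcal F_0$-measurability of $m=g\cdot f$ from \eqref{eq:definition_of_m_counter_example}, note that $g=g\circ\TT^0$ is measurable with respect to $\sigma(g\circ\TT^i,\ i\leqslant 0)\subset\mathcal F_0$, while by \eqref{eq:defintion_of_f_l_counter_example} each $f_l$ is a finite linear combination of indicators of sets $\TT^{N_l-i}C_l$ with $C_l\in\mathcal C$; as $\TT\mathcal C=\mathcal C$ these sets lie in $\mathcal C$, so $f$ is $\mathcal C$-measurable and $m$ is $\mathcal F_0$-measurable.

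The substantial point is the tail estimate, which I reduce at once to the tail of $f$: as $\abs g\leqslant 1$ we have $\PP\{\abs m>t\}\leqslant\PP\{f>t\}$, so it suffices to prove $t^q\PP\{f>t\}\to 0$. For a single index $l$, the blocks indexed by $j$ in \eqref{eq:defintion_of_f_l_counter_example} have pairwise disjoint supports, since the tower sets $\TT^{N_l-i}C_l$, $0\leqslant i\leqslant N_l-1$, are disjoint by Rokhlin's lemma; hence $f_l$ takes the value $a_{l,j}:=L_l^{-1}(N_l2^{-(I_l+j)})^{1/q}$ on a set of measure $\mu_{l,j}=2^{J_l+j}\PP(C_l)$, and $\{f_l>t\}$ is the union of those blocks with $a_{l,j}>t$. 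The clean feature is that $a_{l,j}^q\mu_{l,j}=L_l^{-q}N_l2^{J_l-I_l}\PP(C_l)$ is independent of $j$; summing the geometric measures over the finitely many levels exceeding $t$ (so the sum is controlled by its largest term) and using $N_l\PP(C_l)\leqslant 1$ together with $2^{J_l-I_l}\leqslant 1/2$ (equivalently $I_l\geqslant J_l+1$, the inequality underlying $\PP\{f_l\neq 0\}\leqslant K_l/N_l$ with $K_l=2^{I_l+J_l}$), I obtain the bound $t^q\PP\{f_l>t\}\leqslant L_l^{-q}$, uniform in $t$.

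It remains to pass from the individual $f_l$ to $f=\sum_l f_l$. For each fixed $L$, once $t>\max_{l\leqslant L}a_{l,1}$ no index $l\leqslant L$ satisfies $a_{l,1}>t$, so only indices $l>L$ can render $\PP\{f_l>t\}$ positive; combined with $\sum_l L_l^{-q}<\infty$ (which holds because $\sum_l 1/L_l<\infty$ forces $L_l\to\infty$, so $L_l^{-q}\leqslant L_l^{-1}$ eventually), letting $t\to\infty$ and then $L\to\infty$ gives $\sum_l t^q\PP\{f_l>t\}\to 0$. The point that requires genuine care, and which I expect to be the main obstacle, is that the supports of distinct $f_l$ overlap, so $\{f>t\}$ is strictly larger than $\bigcup_l\{f_l>t\}$ and a crude union bound $\PP\{f>t\}\leqslant\sum_l\PP\{f_l>\theta_l t\}$ (with $\sum_l\theta_l\leqslant 1$) is too lossy even to yield boundedness. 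Here I would exploit the second lacunarity condition \eqref{eq:second_lacunarity_condition_Nl}, which bounds the mass of $\bigcup_{u>l}\{f_u\neq 0\}$ by $1/(16N_l)$, far below the granularity $\PP(C_l)\asymp 1/N_l$ of the $l$-th tower, so that on all but a negligible set $f$ agrees with a single $f_l$; quantifying this near-disjointness shows the overlap contributes only $o(t^{-q})$ to $\PP\{f>t\}$, whence $t^q\PP\{\abs m>t\}\to 0$.

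Finally, the martingale property. The tail estimate gives $\E{\abs m^r}<\infty$ for every $r<q$, in particular $m\in L^1$. Since $f$ is $\mathcal C$-measurable it is $\TT\mathcal F_0$-measurable, so I may factor $\E{m\mid\TT\mathcal F_0}=f\cdot\E{g\mid\TT\mathcal F_0}$, and it only remains to check $\E{g\mid\TT\mathcal F_0}=\E{g}=0$. This is where the independence supplied by Lemma~\ref{lem:positive_entropy} enters: $g$ is $\mathcal B$-measurable with $\mathcal B$ independent of $\mathcal C$, while $(g\circ\TT^n)_{n\in\Z}$ is an independent process, so $g$ is independent both of $\mathcal C$ and of $\sigma(g\circ\TT^i,\ i\leqslant -1)$; testing on the $\pi$-system of products $A\cap C$ with $A\in\sigma(g\circ\TT^i,\ i\leqslant -1)$ and $C\in\mathcal C$ shows $g$ is independent of the whole of $\TT\mathcal F_0$. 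As $g$ has zero mean, $\E{g\mid\TT\mathcal F_0}=0$, and therefore $\E{m\mid\TT\mathcal F_0}=0$, which completes the proof.
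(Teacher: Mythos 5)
Your handling of the structural assertions is sound and essentially complete: the identity $\TT\mathcal F_0=\sigma(g\circ \TT^i,\ i\leqslant -1)\vee\mathcal C\subset\mathcal F_0$, the $\mathcal C$-measurability of $f$ (hence $\mathcal F_0$-measurability of $m$), and the $\pi$-system argument showing that $g$ is independent of $\TT\mathcal F_0$, whence $\E{m\mid \TT\mathcal F_0}=f\,\E{g\mid \TT\mathcal F_0}=0$, are all correct. (Note the paper itself offers no argument at all for this Proposition: it defers entirely to \cite{MR3426520}.) Your per-block tail bound is also right: inside a fixed $l$ the $j$-blocks of \eqref{eq:defintion_of_f_l_counter_example} are disjoint unions of tower levels, the product $a_{l,j}^{q}\mu_{l,j}=L_l^{-q}N_l2^{J_l-I_l}\PP(C_l)$ is $j$-independent, and geometric summation together with $N_l\PP(C_l)\leqslant 1$ gives $\sup_{t>0}t^{q}\PP\prt{f_l>t}\leqslant C L_l^{-q}$; your deduction that $\sum_l t^q\PP\prt{f_l>t}\to 0$ is likewise fine.

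The gap is exactly at the point you flag as "the main obstacle": the passage from the tails of the individual $f_l$ to the tail of $f=\sum_l f_l$ is announced, not proved. The sentence "quantifying this near-disjointness shows the overlap contributes only $o(t^{-q})$ to $\PP\{f>t\}$" is the assertion to be established, and it does not follow from the comparison you invoke. The overlap set $D_l:=\{f_l\neq 0\}\cap\bigcup_{u>l}\{f_u\neq 0\}$ has measure at most $1/(16N_l)$ \emph{uniformly in} $t$; a set of fixed positive measure contributes $o(t^{-q})$ only once $t$ passes a scale, so the argument must couple the threshold $t$ to the index of the towers whose values are of order $t$. Concretely, for $t$ of order $c_l:=\max f_l=L_l^{-1}\prt{N_l2^{-I_l-1}}^{1/q}$ one needs $1/(16N_l)\leqslant \varepsilon t^{-q}\asymp \varepsilon L_l^{q}2^{I_l}/N_l$, i.e. the divergence $L_l^q2^{I_l}\to\infty$; moreover, on the overlap $f$ can be as large as $\sum_{u\leqslant U}c_u$ for the largest active index $U$, so thresholds lying between consecutive scales force you to control $\sum_{u<l}c_u$, which requires \eqref{eq:first_lacunarity_condition_Nl} (giving $\sum_{u<l}c_u\leqslant N_l^{1/q}K_l^{\alpha-1}$ when $L_u\geqslant 1$) together with the inequality $q(p,\alpha)(1-\alpha)>1$, so that $t^{q}\PP\prt{f_U\neq 0}\lesssim K_U^{1-q(1-\alpha)}\to 0$. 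None of this machinery appears in your text, and it cannot be skipped: as you yourself observe, the naive weighted union bound genuinely fails under the sole hypothesis $\sum_l 1/L_l<\infty$. A workable completion is to decompose $\{f>t\}$ according to the largest index $U$ with $f_U\neq 0$ (a.s.\ finite by Borel--Cantelli and \eqref{eq:second_lacunarity_condition_Nl}), use $\{f>t\}\cap\{U=u\}\subset\{f_u>t-\sum_{v<u}c_v\}\cap\{f_u\neq 0\}$, and estimate the resulting series by the per-block bound, \eqref{eq:first_lacunarity_condition_Nl} and \eqref{eq:second_lacunarity_condition_Nl}. As written, the central analytic claim of the Proposition, $\lim_{t\to+\infty}t^{q(p,\alpha)}\PP\prt{\abs m>t}=0$, remains unproven.
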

 A proof can be found in \cite{MR3426520}
 
 It remains to prove that the sequence $(W_n(m))_{n\geq 1}$ is not tight in 
 $B_{p,\alpha}^o$. 
 
 To this aim, we shall check the conditions of Lemma~\ref{lem:non_tightness}. 
 We first show the following intermediate step.
 
 \begin{Lemma}\label{lem:bound_for_gf_l}
  For each integer $l\geqslant 1$, 
  $$ \PP\Big(\frac 1{N_l^{1/q(p,\alpha)}}\max_{1\leqslant k\leqslant K_l}k^{-\alpha}
 \Big(\sum_{i=0}^{N_l-k}
 \abs{S_{gf_l,i+k} -S_{gf_l,i}}^p\Big)^{1/p} >2\Big)>
 \frac 18.
  $$
 \end{Lemma}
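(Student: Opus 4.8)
The plan is to exploit the Rokhlin tower together with the independence of the sign process $g$ from the $\sigma$-algebra $\mathcal C$. Recall from Lemma~\ref{lem:positive_entropy} that $(g\circ\TT^n)_{n\in\Z}$ is an i.i.d. sequence of symmetric signs independent of $\mathcal C$, and that the base $C_l$ and the whole tower $\TT^iC_l$, $0\leqslant i\leqslant N_l-1$, are $\mathcal C$-measurable, as is $f_l$. Conditioning on $\mathcal C$ therefore freezes the deterministic profile of $f_l$ along each orbit, while the increments $S_{gf_l,i+k}-S_{gf_l,i}=\sum_{u=i}^{i+k-1}(g f_l)\circ\TT^u$ become sums of independent symmetric terms with $\mathcal C$-measurable weights. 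First I would read off this profile from \eqref{eq:defintion_of_f_l_counter_example}: along an orbit issued from $C_l$, the function $f_l$ is supported on $J_l$ consecutive blocks sitting near the top of the tower, the $j$-th block having length $2^{J_l+j}$ and constant modulus $L_l^{-1}\bigl(N_l2^{-I_l-j}\bigr)^{1/q(p,\alpha)}$.

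Next I would localise the Besov functional block by block. Since the blocks occupy pairwise disjoint ranges of tower levels, the signs entering distinct blocks are independent; hence, conditionally on $\mathcal C$, the $J_l$ events
$$
E_j:=\Big\{\tfrac1{N_l^{1/q(p,\alpha)}}\max_{1\leqslant k\leqslant 2^{J_l+j}}k^{-\alpha}\Big(\sum_{i}\big|S_{gf_l,i+k}-S_{gf_l,i}\big|^p\Big)^{1/p}>2\Big\},
$$
where the inner sum is restricted to the windows meeting the $j$-th block, are mutually independent. Choosing the window length $k$ equal to the block length and collecting the aligned windows, the normalised block functional reduces to $L_l^{-1}$ times the finite-horizon functional $Y_{n,1/2,1}(g)$ for a suitable horizon $n\geqslant I_l$ dictated by the block; consequently $E_j$ coincides, up to the boundary windows straddling two blocks or exiting the tower (which I would discard at a negligible cost), with $\{Y_{n,1/2,1}(g)>2L_l\}$. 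By the choice \eqref{eq:choice_of_Il} of $I_l$ this has probability within $1/(lJ_l)$ of $\PP\{Y_{1/2,1}>2L_l\}$, and \eqref{eq:constrainJl} forces $J_l\,\PP\{Y_{1/2,1}>2L_l\}$ to lie in $[13/16,15/16]$.

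It then remains a counting argument. On the part of the tower from which all $J_l$ blocks fall inside the window $\{0,\dots,N_l\}$—a set of measure at least $1/2$ by the Rokhlin construction—the conditionally independent events $E_1,\dots,E_{J_l}$ satisfy, by the previous step, $\sum_{j=1}^{J_l}\PP(E_j\mid\mathcal C)=J_l\,\PP\{Y_{1/2,1}>2L_l\}$ up to an error at most $1/l$, whence by independence
$$
\PP\Big(\bigcup_{j=1}^{J_l}E_j\ \Big|\ \mathcal C\Big)\ \geqslant\ 1-\exp\Big(-\sum_{j=1}^{J_l}\PP(E_j\mid\mathcal C)\Big),
$$
which the dominant estimate $1-e^{-7/8}$ keeps bounded below by a universal constant exceeding $1/4$ (the finitely many small indices $l$ being accommodated by the margin in \eqref{eq:constrainJl}). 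Since a single $E_j$ already forces the maximum over $1\leqslant k\leqslant K_l=2^{I_l+J_l}$ to exceed $2$, integrating over the good part of the tower yields the desired lower bound $>1/8$ for every $l$. The main obstacle is the second step: one must show that the genuine Besov functional, with its maximum over $k$ and its full $\ell^p$ sum over all starting indices $i$, is faithfully captured—up to the $L_l^{-1}$ scaling and an error absorbed by the continuity-point choice of $L_l$ and the approximation \eqref{eq:choice_of_Il}—by the single-block functionals $Y_{n,1/2,1}(g)$, controlling in particular the windows overlapping several blocks or leaving the tower; this is exactly the delicate alignment carried out along the lines of Theorem~2.1 in \cite{MR3426520}.
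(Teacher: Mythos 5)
Your proposal is correct and follows essentially the same route as the paper: localize along the Rokhlin tower so that on each block $f_l$ is constant and the increments of $S_{gf_l}$ become fixed multiples of increments of $S_g$, identify each block event with $\{Y_{2^{I_l+j},1/2,1}(g)>2L_l\}$ via \eqref{eq:choice_of_Il} and \eqref{eq:constrainJl}, use independence of the blocks of the i.i.d.\ signs to lower-bound the union (total mass $\approx 7/8$), and pick up the factor $1/2$ from the tower measure through the independence of the $g$-events and the $\mathcal C$-measurable tower sets. The only cosmetic deviations are that you condition on $\mathcal C$ instead of summing over tower levels $i_0$ and invoking $\mathcal B$--$\mathcal C$ independence directly, and you bound the union by $1-\exp\bigl(-\sum_j\PP(E_j)\bigr)$ where the paper uses its Bonferroni lemma (Lemma~\ref{lem:Bonferroni}); both give the required constant.
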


 Let $l\geqslant 1$ be fixed. Assume that $\omega$ belongs to $\TT^{N_l-i_0}$ for some 
 $i_0 \in \left\{K_l,\dots,N_l-1\right\}$. Let $i$ be such that   
 $i_0-2^{I_l+j+1} \leqslant i\leqslant i_0-2^{I_l+j}+1$ for some $j\in 
 \left\{1, \dots,J_l\right\}$. We have 
 \begin{equation*}
  f_l \circ \TT^i(\omega)=\frac 1{L_l}\left(\frac{N_l}{2^{I_l+j}}\right)^{1/q(p,\alpha)}.
 \end{equation*}
 Consequently, for any $k$ such that $2^{I_l+j-1}<k\leqslant 2^{I_l+j}$ 
 and each $i$ such that $i_0-2^{I_l+j+1} \leqslant i\leqslant i_0-k-2^{I_l+j}+1$, we have 
 \begin{equation*}
  \left|S_{gf_l,i+k}-S_{gf_l,i}\right|=
  \frac 1{L_l}\left(\frac{N_l}{2^{I_l+j}}\right)^{1/q(p,\alpha)}
  \left|S_{g,i+k}-S_{g,i}\right|.
 \end{equation*}
It thus follows that 
\begin{multline*}
 \frac 1{N_l^{1/q(p,\alpha)}}\max_{1\leqslant k\leqslant K_l}k^{-\alpha}
 \Big(\sum_{i=0}^{N-k}
 \abs{S_{gf_l,i+k} -S_{gf_l,i}}^p\Big)^{1/p}
 \mathbf 1\left(\TT^{N_l-i_0}C_l\right)\\
 \geqslant  \max_{1\leqslant j\leqslant J_l}
 \max_{2^{I_l+j-1}<k\leqslant 2^{I_l+j}}k^{-\alpha}
  \frac 1{L_l}\left(\frac{1}{2^{I_l+j}}\right)^{1/q(p,\alpha)}
 \Big(\sum_{i=i_0-2^{I_l+j+1}}^{i_0-k-2^{I_l+j}+1}
 \abs{S_{g,i+k} -S_{g,i}}^p\Big)^{1/p}
 \mathbf 1\left(\TT^{N_l-i_0}C_l\right),
\end{multline*}
and using disjointness of the sets $\TT^{N_l-i_0}C_l$, $K_l\leqslant i_0\leqslant N_l-1$, 
we infer that 

\begin{multline}\label{eq:lower_bound_Pl}
 \PP{\Big(\frac 1{N_l^{1/q(p,\alpha)}}\max_{1\leqslant k\leqslant K_l}k^{-\alpha}
 \Big(\sum_{i=0}^{N-k}
 \abs{S_{gf_l,i+k} -S_{gf_l,i}}^p\Big)^{1/p}>2}\Big) 
 \\ 
 \geqslant \PP{\Big(\frac 1{N_l^{1/q(p,\alpha)}}\max_{1\leqslant k\leqslant K_l}k^{-\alpha}
 \Big(\sum_{i=0}^{N-k}
 \abs{S_{gf_l,i+k} -S_{gf_l,i}}^p\Big)^{1/p}>2}\cap 
 \bigcup_{i_0=K_l}^{N_l-1}\TT^{N_l-i_0}C_l\Big) \\
 =\sum_{i_0=K_l}^{N_l-1}\PP\Big(\Big\{\frac 1{N_l^{1/q(p,\alpha)}}\max_{1\leqslant k\leqslant K_l}k^{-\alpha}
 \Big(\sum_{i=0}^{N-k}
 \abs{S_{gf_l,i+k} -S_{gf_l,i}}^p\Big)^{1/p}>2\Big\}\cap 
 \TT^{N_l-i_0}C_l\Big) \\
 \geqslant \sum_{i_0=K_l}^{N_l-1}\PP(A_{i_0} 
\cap \TT^{N_l-i_0}C_l)
\end{multline}
where
$$
A_{i_0}= 
 \Big\{
 \max_{1\leqslant j\leqslant J_l}
 \max_{2^{I_l+j-1}<k\leqslant 2^{I_l+j}}k^{-\alpha}
  \frac 1{L_l}\left(\frac{1}{2^{I_l+j}}\right)^{1/q(p,\alpha)}
 \Big(\sum_{i=i_0-2^{I_l+j+1}}^{i_0-k-2^{I_l+j}+1}
 \abs{S_{g,i+k} -S_{g,i}}^p\Big)^{1/p}>2 \Big\}.
 $$
Since $\TT$ is measure-preserving, the events 
$ A_{i_0}\cap 
 \TT^{N_l-i_0}C_l,\quad K_l\leqslant  i_0 \leqslant N_l-1$
have the same probability, which is equal to 
$\PP(A_{K_l}\cap  \TT^{N_l-K_l}C_l)$.
The events $A_{K_l}$ and 
%
 $\TT^{N_l-K_l}C_l$ belong respectively to $\mathcal B$ and $\mathcal C$, hence 
 they are independent. In view of \eqref{eq:lower_bound_Pl}, we obtain 
 \begin{multline}\label{eq:lower_bound_Pl2}
 \PP{\Big(\frac 1{N_l^{1/q(p,\alpha)}}\max_{1\leqslant k\leqslant K_l}k^{-\alpha}
 \Big(\sum_{i=0}^{N_l-k}
 \abs{S_{gf_l,i+k} -S_{gf_l,i}}^p\Big)^{1/p}>2\Big)} 
 \geqslant \prt{N_l-K_l}\PP(A_{K_l})\PP(C_l)\\
 \geqslant  \PP(A_{K_l})/2.
%
 \end{multline}
 Now, in order to control the latter term, we shall use the following lemma:
 \begin{Lemma}\label{lem:Bonferroni}
  Let $\prt{H_l}_{l\geqslant 1}$ be an increasing sequence of integers. 
  Assume that for each $l\geqslant 1$, the family of events 
  $\prt{A_{l,j}}_{1\leqslant j\leqslant H_l}$ is independent and that 
  $\sum_{j=1}^{H_l}\PP\prt{A_{l,j}}\in [3/4,1]$. Then for each $l\geqslant 1$, 
  \begin{equation*}
   \PP\Big(\bigcup_{j=1}^{H_l} A_{l,j} \Big) \geqslant 1/4.
  \end{equation*}

 \end{Lemma}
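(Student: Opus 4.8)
The plan is to use a Bonferroni-type lower bound for the probability of a union of independent events, exploiting the hypothesis that the probabilities $\PP(A_{l,j})$ sum to something in the range $[3/4,1]$. First I would fix $l\geqslant 1$ and abbreviate $p_j:=\PP(A_{l,j})$ and $\Sigma:=\sum_{j=1}^{H_l}p_j\in[3/4,1]$. Passing to complements, by independence I have
\begin{equation*}
 \PP\Big(\bigcup_{j=1}^{H_l} A_{l,j}\Big)=1-\prod_{j=1}^{H_l}(1-p_j).
\end{equation*}
The natural tool is the elementary inequality $1-x\leqslant e^{-x}$ valid for all real $x$, which gives $\prod_{j=1}^{H_l}(1-p_j)\leqslant \exp\big(-\sum_{j}p_j\big)=e^{-\Sigma}$. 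Since $\Sigma\geqslant 3/4$, this yields the lower bound $1-e^{-3/4}$ for the probability of the union.

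The remaining step is purely numerical: I would check that $1-e^{-3/4}\geqslant 1/4$, i.e.\ that $e^{-3/4}\leqslant 3/4$. Since $e^{3/4}\approx 2.117>4/3$, the inequality $e^{-3/4}\leqslant 3/4$ holds comfortably, so $1-e^{-3/4}\approx 0.528\geqslant 1/4$ and the claim follows. Note that the upper constraint $\Sigma\leqslant 1$ is not even needed for this direction; it presumably serves only to guarantee the $p_j$ are consistent probabilities and to control the events in the application.

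I expect no serious obstacle here: the only subtlety is choosing a bound on the product that is both clean and strong enough, and the exponential bound $1-x\leqslant e^{-x}$ is more than sufficient given the generous gap between $1-e^{-3/4}$ and $1/4$. An alternative, avoiding the exponential, would be a direct inclusion--exclusion truncation: $\PP(\bigcup_j A_{l,j})\geqslant \sum_j p_j-\sum_{i<j}p_ip_j\geqslant \Sigma-\tfrac12\Sigma^2$, and since $x\mapsto x-\tfrac12 x^2$ is increasing on $[0,1]$ its value at $x=3/4$ is $\tfrac{3}{4}-\tfrac{9}{32}=\tfrac{15}{32}\geqslant 1/4$; this gives the bound using only independence through the second-order term. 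Either route closes the proof in a couple of lines.
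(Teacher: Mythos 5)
Your proposal is correct, and your main argument takes a genuinely different route from the paper. The paper proves the lemma via Bonferroni's inequality: it lower-bounds the union by $\sum_j \PP(A_{l,j})-\sum_{i<j}\PP(A_{l,i}\cap A_{l,j})$, uses independence to write the pairwise intersections as products, bounds $\sum_{i<j}p_ip_j\leqslant \tfrac12\big(\sum_j p_j\big)^2$, and concludes with $\Sigma-\tfrac12\Sigma^2\geqslant 3/4-1/2=1/4$ using $3/4\leqslant\Sigma\leqslant 1$. Your primary route instead passes to complements, writes $\PP\big(\bigcup_j A_{l,j}\big)=1-\prod_j(1-p_j)$ by independence, and applies $1-x\leqslant e^{-x}$ to get the stronger bound $1-e^{-3/4}\approx 0.53$; as you observe, this makes the hypothesis $\Sigma\leqslant 1$ superfluous. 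The trade-off is that the complement-product identity uses full mutual independence of the family, whereas the paper's Bonferroni argument only needs pairwise independence of the events — a weaker hypothesis, though in this paper's application (blocks of an i.i.d.\ process) mutual independence holds anyway. Your sketched alternative is essentially the paper's proof, and in fact your evaluation $\Sigma-\tfrac12\Sigma^2\geqslant 15/32$ via monotonicity of $x\mapsto x-\tfrac12x^2$ on $[0,1]$ is slightly sharper than the paper's $1/4$. Either of your two routes closes the proof completely.
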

  \begin{proof}[Proof of Lemma~\ref{lem:Bonferroni}]
   By Bonferroni's inequality, we have for any $l\geqslant 1$, 
   \begin{equation*}
    \PP\Big(\bigcup_{j=1}^{H_l} A_{l,j} \Big) \geqslant
    \sum_{j=1}^{H_l}\PP\prt{ A_{l,j}}-
    \sum_{1\leqslant i<j\leqslant H_l}\PP\prt{A_{l,i}\cap 
    A_{l,j}}.
   \end{equation*}
  Using independence of $\prt{A_{l,j}}_{1\leqslant j\leqslant H_l}$, we derive that 
  \begin{align*}
    \PP\prt{\bigcup_{j=1}^{H_l} A_{l,j} } &\geqslant
    \sum_{j=1}^{H_l}\PP\prt{ A_{l,j}}-\frac 12\left(2
    \sum_{1\leqslant i<j\leqslant H_l}\PP\prt{A_{l,i}}
    \PP\prt{A_{l,j}}\right)\\
    &= \sum_{j=1}^{H_l}\PP\prt{ A_{l,j}}-\frac 12\left( \prt{\sum_{j=1}^{H_l}
    \PP\prt{A_{l,j} } }^2- \sum_{j=1}^{H_l}\prt{\PP\prt{A_{l,j}} }^2\right)\\
    &\geqslant \sum_{j=1}^{H_l}\PP\prt{ A_{l,j}}-\frac 12
    \prt{ \sum_{j=1}^{H_l}\PP\prt{ A_{l,j}}}^2 \\
    &\geqslant 3/4-1/2=1/4.
  \end{align*}

   \end{proof}
 We now use Lemma~\ref{lem:Bonferroni} with the choices $H_l=J_l$ and 
 \begin{equation*}
  A_{l,j}:=\left\{
 \max_{2^{I_l+j-1}<k\leqslant 2^{I_l+j}}k^{-\alpha}
  \frac 1{L_l}\left(\frac{1}{2^{I_l+j}}\right)^{1/q(p,\alpha)}
 \left(\sum_{i=K_l-2^{I_l+j+1}}^{K_l-k-2^{I_l+j}+1}
 \abs{S_{g,i+k} -S_{g,i}}^p\right)^{1/p}>2 \right\}.
 \end{equation*}
We indeed have, with the notations of \eqref{eq:example_of_convergence_of_functionals} and 
by \eqref{eq:choice_of_Il}, 
\begin{equation*}
 \abs{\sum_{j=1}^{J_l}\PP\prt{A_{l,j}} -J_l \PP\prt{  
 \ens{Y_{1/2,1}>2L_l}
 } } \leqslant \sum_{j=1}^{J_l}
 \frac 1{lJ_l}=1/l
\end{equation*}
hence by \eqref{eq:constrainJl},
\begin{equation*}
 \abs{\sum_{j=1}^{J_l}\PP\prt{A_{l,j}} -7/8} \leqslant 
 \frac 1{16l}+\abs{J_l \PP\prt{  
 \ens{Y_{1/2,1}>2L_l} }-7/8
 } \leqslant \frac  18.
\end{equation*}
 
 We get, in view of \eqref{eq:lower_bound_Pl2} the lower bound 
 \begin{equation*}
   \PP{\ens{\frac 1{N_l^{1/q(p,\alpha)}}\max_{1\leqslant k\leqslant K_l}k^{-\alpha}
 \Big(\sum_{i=0}^{N_l-k}
 \abs{S_{gf_l,i+k} -S_{gf_l,i}}^p\Big)^{1/p}>2} }
 \geqslant \frac 18.
 \end{equation*}
 This concludes the proof of Lemma~\ref{lem:bound_for_gf_l}.

Now, we prove that for any $l\geqslant 1$, 
\begin{equation}\label{eq:verification_non_tightness}
 \PP{\ens{\frac 1{N_l^{1/q(p,\alpha)}}\max_{1\leqslant k\leqslant K_l}k^{-\alpha}
 \Big(\sum_{i=0}^{N_l-k}
 \abs{S_{m,i+k} -S_{m,i}}^p\Big)^{1/p}>1} }
 \geqslant \frac 1{16}.
\end{equation}
We first prove that 
\begin{equation}\label{eq:bound_for_sum_fi_i_leq_l}
 \frac 1{N_l^{1/q(p,\alpha)}}\max_{1\leqslant k\leqslant K_l}k^{-\alpha}
 \Big(\sum_{i=0}^{N_l-k}
 \abs{S_{f'_l,i+k} -S_{f'_l,i}}^p\Big)^{1/p}\leqslant 1,
\end{equation}
where $f'_l:=\sum_{i=1}^lgf_i$. First note that for 
$1\leqslant k\leqslant K_l$ and 
$0\leqslant i\leqslant N_l-k$, 
\begin{align*}
 \abs{S_{f'_l,i+k} -S_{f'_l,i}}
 &\leqslant\sum_{u=1}^{l-1} \abs{S_{gf_u,i+k} -S_{gf_u,i}}
 \leqslant k\sum_{u=1}^{l-1}\max_{i\leqslant v\leqslant i+k-1}\abs{f_u\circ 
 \TT^v}\\
 &\leqslant k\cdot \sum_{u=1}^{l-1} \max_{0\leqslant v\leqslant N_l}\abs{f_u\circ 
 \TT^v},
\end{align*}
hence 
\begin{equation*}
 \frac 1{N_l^{1/q(p,\alpha)}}\max_{1\leqslant k\leqslant K_l}k^{-\alpha}
 \left(\sum_{i=0}^{N_l-k}
 \abs{S_{f'_l,i+k} -S_{f'_l,i}}^p\right)^{1/p}
 \leqslant 
 \frac 1{N_l^{1/q(p,\alpha)}}K_l^{1-\alpha}\cdot \sum_{u=1}^{l-1}\cdot
 \max_{0\leqslant v\leqslant N_l}\abs{f_u\circ \TT^v}.
\end{equation*}
Now, by definition of $f_u$, for each $\omega\in \Omega$, the following inequality holds: 
$\abs{f_u\prt{\omega}}\leqslant \left(\frac{N_u}{2^{I_u}}\right)^{1/q(p,\alpha)}$. 

Consequently, 
\begin{equation*}
 \frac 1{N_l^{1/q(p,\alpha)}}\max_{1\leqslant k\leqslant K_l}k^{-\alpha}
 \left(\sum_{i=0}^{N_l-k}
 \abs{S_{f'_l,i+k} -S_{f'_l,i}}^p\right)^{1/p}
 \leqslant  \frac 1{N_l^{1/q(p,\alpha)}}K_l^{1-\alpha}
 \sum_{u=1}^{l-1}\left(\frac{N_u}{2^{I_u}}\right)^{1/q(p,\alpha)},
 \end{equation*}
and this term does not exceed $1$ by \eqref{eq:first_lacunarity_condition_Nl}. This 
proves \eqref{eq:bound_for_sum_fi_i_leq_l}

Now, defining $f''_l:=\sum_{u=l+1}^{+\infty}gf_u$, we have 
\begin{multline*}
 \PP{\ens{\frac 1{N_l^{1/q(p,\alpha)}}\max_{1\leqslant k\leqslant K_l}k^{-\alpha}
 \Big(\sum_{i=0}^{N_l-k}
 \abs{S_{f''_l,i+k} -S_{f''_l,i}}^p\Big)^{1/p}\neq 0} }\\
 \leqslant \sum_{u=l+1}^{+\infty}
 \PP{\ens{\max_{1\leqslant k\leqslant K_l}k^{-\alpha}
 \left(\sum_{i=0}^{N_l-k}
 \abs{S_{gf_u,i+k} -S_{gf_u,i}}^p\right)^{1/p}\neq 0} }\\
 \leqslant \sum_{u=l+1}^{+\infty}\PP{\ens{\max_{0\leqslant v\leqslant N_l-1}
 \abs{gf_u\circ \TT^v}\neq 0  }}\\
 \leqslant N_l\sum_{u=l+1}^{+\infty}\PP{\ens{
 \abs{gf_u}\neq 0  }}\leqslant N_l\sum_{u=l+1}^{+\infty}\PP{\ens{
 f_u\neq 0  }}.
\end{multline*}
By constructing of $f_u$, we have $\PP\prt{\ens{
 f_u\neq 0  }}\leqslant K_u/N_u$, hence 
 \begin{equation}\label{eq:bound_for_sum_fi_i_geq_l}
 \PP{\ens{\frac 1{N_l^{1/q(p,\alpha)}}\max_{1\leqslant k\leqslant K_l}k^{-\alpha}
 \left(\sum_{i=0}^{N_l-k}
 \abs{S_{f''_l,i+k} -S_{f''_l,i}}^p\right)^{1/p}\neq 0} }
 \leqslant N_l\sum_{u=l+1}^{+\infty}\frac{K_u}{N_u}\leqslant 1/16,
 \end{equation}
 by \eqref{eq:second_lacunarity_condition_Nl}. 
 
 Thus \eqref{eq:verification_non_tightness} follows from the combination of 
 Lemma~\ref{lem:bound_for_gf_l},
 \eqref{eq:bound_for_sum_fi_i_leq_l} and 
 \eqref{eq:bound_for_sum_fi_i_geq_l}. This ends the proof of 
 Theorem~\ref{thm:counter_example_martingales}.

\section{Proofs: the case $\alpha\leqslant 1/p$}
\label{sec:proof_alpha_leq_1/p}

We start with the following lemma which reduces the proof of convergence to that of tightness.
\begin{Lemma}\label{remark_fdd_2}  Let $p\geqslant 1$ and 
$0\leqslant\alpha\leqslant \min\{1/2, 1/p\}$. Assume that $Z$ is a random 
element in  $B^o_{p, \alpha}$. Then for any stationary sequence $\prt{f\circ \TT^j}$ if 
\begin{itemize}
\item[$(i)$] $W_{f,n}\wc Z$ in $L_p[0, 1]$, and 
\item[$(ii)$] $(W_{f,n})$ is tight in $B^o_{p, \alpha}$, 
\end{itemize}
then $W_{f,n}\wc Z$ in $B^o_{p, \alpha}$. 
\end{Lemma}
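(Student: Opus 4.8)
The plan is to imitate the proof of Lemma~\ref{remark_fdd_1}, adapting it to the regime $\alpha\leqslant 1/p$. There, convergence in $C[0,1]$ was combined with tightness in $B^o_{p,\alpha}$, and the distribution was pinned down through the Faber--Schauder coefficients $\lambda_r$. Here those coefficients are useless, since they involve point evaluations and are not continuous on $L_p([0,1])$; instead I would use the Franklin coefficients $x\mapsto\ip{x,f_k}=\int_0^1 x(t)f_k(t)\dd t$, which \emph{are} continuous on $L_p([0,1])$. As in Lemma~\ref{remark_fdd_1}, the whole argument reduces, via tightness (ii) and Prohorov's theorem, to showing that every subsequential weak limit of $\prt{W_{f,n}}$ in $B^o_{p,\alpha}$ has the same law as $Z$.

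First I would check the continuity of the Franklin coefficient functionals. Each $f_k=\sum_{i=0}^k c_{ik}\Lambda_i$ is a finite linear combination of the bounded, piecewise linear Faber--Schauder functions, hence $f_k\in L_\infty([0,1])$, so by Hölder's inequality $x\mapsto\ip{x,f_k}$ is continuous on $L_p([0,1])$; by the continuous embedding $B^o_{p,\alpha}\hookrightarrow L_p([0,1])$ (which follows from $\norm{x}_p\leqslant\norm{x}_{p,\alpha}$) it is \emph{a fortiori} continuous on $B^o_{p,\alpha}$. Hypothesis (i) and the continuous mapping theorem then yield, for each finite family $k_1,\dots,k_m$,
$$
\prt{\ip{W_{f,n},f_{k_1}},\dots,\ip{W_{f,n},f_{k_m}}}\wc\prt{\ip{Z,f_{k_1}},\dots,\ip{Z,f_{k_m}}}.
$$

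Next I would extract, from any subsequence of $\prt{W_{f,n}}$ and using tightness (ii) together with Prohorov's theorem, a further subsequence $W_{f,n'}\wc Y'$ in $B^o_{p,\alpha}$. Evaluating the functionals $\ip{\cdot,f_{k_i}}$ — continuous on $B^o_{p,\alpha}$ — along this subsequence and comparing with the display above, every finite-dimensional projection of the Franklin coefficients of $Y'$ has the same law as the corresponding projection for $Z$; hence $Y'$ and $Z$ share the joint law of the full family $\prt{\ip{\cdot,f_k}}_{k\geqslant 0}$.

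The hard part will be to upgrade this coincidence of coefficient laws to equality in distribution of $Y'$ and $Z$ as $B^o_{p,\alpha}$-valued random elements. This rests on the Franklin basis theorem of \cite{MR1244574}: since $\{f_k\}$ is a Schauder basis, the coordinate map $x\mapsto(\ip{x,f_k})_{k\geqslant 0}$ is injective, the partial sums $\sum_{k=0}^N\ip{x,f_k}f_k$ are continuous and converge to $x$ in $B^o_{p,\alpha}$, so $x$ is a Borel function of its coefficient sequence and the Borel $\sigma$-algebra of $B^o_{p,\alpha}$ is generated by the functionals $\ip{\cdot,f_k}$. Consequently the law of a $B^o_{p,\alpha}$-valued random element is determined by the joint law of its Franklin coefficients, which gives $Y'\overset{d}{=}Z$. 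Since all subsequential limits then coincide with $Z$, Prohorov's theorem delivers $W_{f,n}\wc Z$ in $B^o_{p,\alpha}$, as required.
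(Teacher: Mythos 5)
Your proposal is correct and follows essentially the same route as the paper's own proof: tightness plus Prohorov's theorem reduces the claim to identifying subsequential weak limits, which are pinned down through the Franklin coefficients $\ip{\cdot, f_k}$ — continuous on $L_p([0,1])$ and determining the law of a $B^o_{p,\alpha}$-valued random element. You merely make explicit the details the paper leaves implicit (continuity of the coefficient functionals, joint convergence of finite families, and the Schauder-basis argument showing that the coefficient laws determine the distribution).
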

\begin{proof} From (ii) we have that each subsequence of $(W_{f,n})$ has further subsequence that converges in distribution. If $W_{f,n'}\wc Y'$ and $W_{f, n''}\wc Y''$ then  we have that for Franklin basis $(f_k)$ it holds 
that $\ip{W_{f,n'}, f_k}\wc \ip{Y', f_k}$ and  
$\ip{W_{f,n''}, f_k}\wc \ip{Y'', f_k}$ for any  $k$. But (i) gives that both $\ip{Y', f_k}$ 
and $\ip{Y'', f_k}$ have the same distribution as $\ip{Z, f_k}$.
Since coefficients $\ip{Z, f_k}$ determines the distribution of $Z$  we can conclude 
that $Y'$ and $Y''$ are equally 
distributed with $Z$. This ends the proof.
\end{proof}

\subsection{Proof of Theorem \ref{thm:WIP_Besov_martingales_2}}  
Due to continuity of the embedding 
$B^o_{2,\alpha}\hookrightarrow B^o_{p, \alpha}$ if $1\leqslant p\leqslant 2$ and $0\leqslant \alpha<1/2$.
it is enough to prove the 
case where either $p=2$ and $0\leqslant\alpha<1/2$ or $p>2$ and $0\leqslant \alpha\leqslant 1/p$. 
%

Recall $W_{f,n}=n^{-1/2}\zeta_{f,n}$. 
We shall prove  for each $\e>0$
\begin{equation}\label{eq:1} 
\lim_{\delta\to 0}\sup_{n\geqslant 1} I_n(\delta, \e)=0,
\end{equation}
where
$$
I_n(\delta, \e)=\PP\Big(\delta^{-\alpha}\sup_{|h|\le\delta}\Big(\int_0^1|W_{f,n}(t+h)-W_{f, n}(t)|^p\dd t\Big)^{1/p}>\e \Big).
$$
Since the function $W_{f,n}(t), 0\leqslant t\leqslant 1$  is affine in each interval 
$\left((k-1)/n, k/n\right]$, it holds  for  $s, t\in [(k-1)/n, k/n],$ 
\begin{equation}\label{eq:2a}
\abs{W_{f,n}(s)-W_{f,n}(t)}\leqslant n^{1/2}|s-t|\cdot \abs{f\circ \TT^k}.
\end{equation} 
This observation leads to  
$$
\omega_p(W_{f,n}, \delta)\leqslant c_p\left[U_{f,n}(\delta)+V_{f,n}(\delta)\right],
$$
where $c_p>0$ is a constant depending on $p$ only, 
\begin{align*}
U_{f,n}(\delta)&:=\min\{\delta, n^{-1}\}n^{1/2}
\prt{\frac{1}{n}\sum_{k=1}^n|f\circ \TT^k|^p}^{1/p},\\
V_{f,n}(\delta)&:=n^{-1/2}\max_{1\leqslant \ell\leqslant \lfloor n\delta\rfloor }
\prt{\frac{1}{n}\sum_{k=0}^{n-\ell}\abs{\sum_{j=k+1}^{k+\ell}f\circ \TT^j}^p}^{1/p}.
\end{align*}
As a consequence in order to establish \eqref{eq:1} we have to prove 
\begin{align}
&\lim_{\delta\to 0}\sup_{n\geqslant 1} \PP\ens{\delta^{-\alpha}U_{f,n}(\delta)\geqslant \e}=0,\label{1task}\\
&\lim_{\delta\to 0}\sup_{n\geqslant 1/\delta} 
\PP\ens{\delta^{-\alpha}V_{f,n}(\delta)> \e}=0.\label{2task}
\end{align}
Consider first \eqref{1task} and start with $p=2$ and $0\leqslant\alpha<1/2$.
By Chebyshev inequality 
\begin{align*}
\PP\ens{\delta^{-\alpha}U_{f,n}(\delta)\geqslant \e}&\leqslant \e^{-2}\delta^{-2\alpha}
\min\{\delta^2, n^{-2}\}\E{\sum_{k=1}^n |f\circ \TT^k|^2}\\
&\leqslant \e^{-2}\delta^{-2\alpha}\min\{\delta^2, n^{-2}\}
n\E{f^2}\leqslant \e^{-2}\delta^{1-2\alpha}
\end{align*}
and \eqref{1task} follows in this case. 
%
%
Now let $p>2$ and $0\leqslant \alpha\leqslant 1/p$. 
For this case  we shall use truncation. Set for $\tau>0$,
$$
f'=f\bm{1}(|f|\le \tau\sqrt{\max\{n, \delta^{-1}\}}),\quad f''=f-f'.
$$
Then $\PP(\delta^{-\alpha}U_{f,n}(\delta)>\e)\le n\PP(|f|\ge \tau\sqrt{\max\{n, \delta^{-1}\}})+\PP(\delta^{-\alpha}U_{f',n}(\delta)>\e)$ and, since 
$$
n\PP(|f|\ge \tau\sqrt{\max\{n, \delta^{-1}\}})\le \tau^{-2}\E f^2\bm{1}(|f|\ge \sqrt{\delta^{-1}})
$$
we reduce the proof of \eqref{1task} to 
\begin{equation}\label{1atask}
\lim_{\delta\to 0}\sup_{n\geqslant 1} \PP\ens{\delta^{-\alpha}U_{f',n}(\delta)\geqslant \e}=0.
\end{equation}
We have by Chebyshev inequality,
\begin{align*}
\PP\ens{\delta^{-\alpha}U_{f',n}(\delta)\geqslant \e}&\leqslant \e^{-p}\delta^{-p\alpha}
\min\{\delta^p, n^{-p}\}n^{p/2}\EE\Big[n^{-1}\sum_{k=1}^n |f'\circ \TT^k|^p\Big]\\
&\leqslant \e^{-p}\delta^{-p\alpha}\min\{\delta^p, n^{-p}\}n^{p/2}
\E{(f')^p}\\
&\leqslant \e^{-p}\delta^{-p\alpha}\min\{\delta^p, n^{-p}\}n^{p/2} \tau^{p-2}(\max\{n, \delta^{-1}\})^{(p-2)/2}\EE f^2\\
&\leqslant \e^{-p}\tau^{p-2}(\min\{\delta, n^{-1}\})^{1-p\alpha}\EE f^2. 
\end{align*}
Hence
$$
\lim_{\delta\to 0}\sup_{n\geqslant 1} \PP\ens{\delta^{-\alpha}U_{f',n}(\delta)\geqslant \e}\le \e^{-p}\tau^{p-2}.
$$
Since $\tau>0$ is arbitrary, the limit is indeed zero, and  the proof of (\ref{1atask}) is completed. 

To prove \eqref{2task} we start again with the case  $p=2$ and $0\leqslant\alpha<1/2$.  In 
this case Chebyshev inequality along  with stationarity and Doob-Kolmogorov 
inequality yields
\begin{align*}
\PP(\delta^{-\alpha}V_{f,n}(\delta)>\e)&\leqslant 
\e^{-2}\delta^{-2\alpha}\EE( V_{f,n}(\delta))^2\leqslant \e^{-2}\delta^{-2\alpha}n^{-1}
\EE{\max_{1\leqslant \ell\leqslant \lfloor n\delta\rfloor }\Big(\sum_{j=1}^\ell 
f\circ \TT^{j}\Big)^2}\\
&\leqslant \e^{-2}\delta^{1-2\alpha}\EE{f^2}
\end{align*} 
and \eqref{2task} follows.
This ends the proof of \eqref{2task} in the case $p=2$.

Assume that $p>2$ and $\alpha\leqslant 1/p$.
Let us fix $\varepsilon>0$. 
Define for any $\delta \in(0,1)$ and any integer $n\geqslant 1/\delta$ the events 
$$ A_{n,\delta}:=\prt{\delta^{-\alpha} n^{-1/2}\max_{1\leqslant \ell\leqslant \lfloor n\delta\rfloor }
\prt{\frac{1}{n}\sum_{k=0}^{n-\ell}\abs{\sum_{j=k}^{k+\ell-1}f\circ \TT^j}^p}^{1/p}>
\varepsilon
 }
$$ 
$$ B_{n,\delta,\tau}:=\prt{\max_{1\leqslant \ell\leqslant \lfloor n\delta\rfloor }
 n^{-1/2}\max_{1\leqslant k\leqslant n-l}\abs{ S_{f,k+l}-S_{f,k} }  \geqslant 
 \e^{\frac p{p-2}} \tau^{\frac{p}{p-2}} }, 
$$ 
where $\tau$ is an arbitrary but fixed positive number. 
 We have the bound
$$ \PP\prt{B_{n,\delta,\tau}}\leqslant 
 \PP\prt{\sup_{   0\leqslant s<t\leqslant 1,\abs{t-s}<\delta } 
 n^{-1/2}\abs{\zeta_{f,n}(t) -\zeta_{f,n}(s)   } \geqslant 
 \e^{\frac p{p-2}} \tau^{\frac{p}{p-2}}  }.
$$ 
Since the sequence $\prt{\zeta_{f,n}}$ is tight in the space $C[0,1]$
(see \cite{Billingsley:1968}), we have 
\begin{equation} \label{eq:intermediate_step_case_alpha_leq_1/p}
 \lim_{\delta\to 0} \sup_{n\geqslant 1/\delta}
  \PP\prt{B_{n,\delta,\tau}}=0.
\end{equation}
Now, note that on $A_{n,j}\cap B_{n,j,\tau}^c$, we have for any 
$0\leqslant \ell\leqslant \lfloor n\delta\rfloor $, 
\begin{align}
\sum_{k=0}^{n-\ell}\abs{\sum_{j=k}^{k+\ell-1}f\circ \TT^j}^p
&\leqslant \sum_{k=0}^{n-\ell}\abs{\sum_{j=k}^{k+\ell-1}f\circ \TT^j}^2
\max_{1\leqslant \ell\leqslant \lfloor n\delta\rfloor }
\prt{ n^{-1/2}\max_{1\leqslant k\leqslant n-l}\abs{ S_{f,k+\ell}-S_{f,k} } }^{p-2}
n^{(p-2)/2}\\
 &\leqslant \sum_{k=0}^{n-\ell}\abs{\sum_{j=k}^{k+\ell-1}f\circ \TT^j}^2
 n^{(p-2)/2}\e^{p} \tau^{p}.
\end{align}
Therefore, we have 
\begin{align*}
 \e&<\delta^{-\alpha} n^{-1/2}\max_{1\leqslant \ell\leqslant \lfloor n\delta\rfloor }
\prt{\frac{1}{n}\sum_{k=0}^{n-\ell}\abs{\sum_{j=k}^{k+\ell-1}f\circ \TT^j}^p}^{1/p} \\
&\leqslant \delta^{-\alpha}n^{-1/2}n^{-1/p}\prt{n^{(p-2)/2}\e^{p} \tau^{p} }^{1/p}
\max_{1\leqslant \ell\leqslant \lfloor n\delta\rfloor }
\prt{\sum_{k=0}^{n-\ell}\abs{\sum_{j=k}^{k+\ell-1}f\circ \TT^j}^2}^{1/p}\\
&=\delta^{-\alpha}n^{-2/p}\e \tau 
\max_{1\leqslant \ell\leqslant \lfloor n\delta\rfloor }
\prt{\sum_{k=0}^{n-\ell}\abs{\sum_{j=k}^{k+\ell-1}f\circ \TT^j}^2}^{1/p},
\end{align*}
and we infer that 
$$ \PP\prt{A_{n,\delta}\cap B_{n,\delta,\tau}^c} 
 \leqslant \PP\prt{\delta^{-\alpha p}n^{-2} \tau^{p}
 \max_{1\leqslant \ell\leqslant \lfloor n\delta\rfloor }
 \sum_{k=0}^{n-\ell}
 \abs{\sum_{j=k}^{k+\ell-1}f\circ \TT^j}^2>1}.
$$ 
By Markov's inequality, stationarity and Doob's inequality, we have 
\begin{align*}
 \PP\prt{A_{n,\delta}\cap B_{n,\delta,\tau}^c} 
 &\leqslant \delta^{-\alpha p}n^{-2} \tau^{p}\E{\max_{1\leqslant \ell\leqslant \lfloor n\delta\rfloor }
 \sum_{k=0}^{n-\ell}
 \abs{\sum_{j=k}^{k+\ell-1}f\circ \TT^j}^2} \\
 &\leqslant \delta^{-\alpha p}n^{-2} \tau^{p}\E{
 \sum_{k=0}^{n}\max_{1\leqslant \ell\leqslant \lfloor n\delta\rfloor }
 \abs{\sum_{j=k}^{k+\ell-1}f\circ \TT^j}^2}\\
 &=\delta^{-\alpha p}n^{-1} \tau^{p}\E{
 \max_{1\leqslant \ell\leqslant \lfloor n\delta\rfloor }
 \abs{S_{f,\ell}}^2}\\
 &\leqslant 2\delta^{1-\alpha p} \tau^{p}\E{f^2}.
\end{align*}
Since $p\alpha\leqslant 1$, we get 
$$ \PP\prt{A_{n,\delta}\cap B_{n,\delta}^c} \leqslant 2\tau^{p}\E{f^2}
$$ 
and since $\tau$ is arbitrary, we get
$$ \lim_{\delta\to 0}\sup_{n\geqslant 1/\delta}
 \PP\prt{A_{n,\delta}} =0
$$ 
in view of \eqref{eq:intermediate_step_case_alpha_leq_1/p}.
This concludes the proof of \eqref{2task} and that of
Theorem~\ref{thm:WIP_Besov_martingales_2}.

\subsection{Proof of Theorem \ref{thm:WIP_Besov_iid_2}}

 It follows from Theorem \ref{thm:WIP_Besov_martingales_2} and the same 
 arguments as used in the proof of Theorem \ref{thm:WIP_Besov_iid_1}.

\section{Some applications}

As already was mentioned in the introduction, a choice of  functional
spaces for polygonal line processes is usually inspired by  possible applications
in statistics via continuous mappings: if $W_{f,n}\wc W$ in the
space $B^o_{p, \alpha}$, then $T(W_{f,n})\to T(W)$ for any continuous 
function $T:B^o_{p, \alpha}\to \R$. This general observation can be used, for example,  to analyse so called $k$-scan processes 
%
%
$$
S^{(i)}_{f,k}=\sum_{j=i}^{i+k-1} f\circ \TT^j,
\quad i=1, \dots, n-k+1.
$$
%

\begin{Proposition}\label{prop:example_of_functional}
 Let $f$ be a function such that the sequence $\left(W_n(f)\right)_{n\geqslant 1}$ converges 
 to a standard Brownian motion $W$ in 
 $B_{p,\alpha}^o$, where $1/p<\alpha<1/2$. For each 
 $a,b\in [0,1]$ such that $a<b$, we define 
$$  Y_{n,a,b}(f):=n^{-1/{q(p,\alpha)}}\max_{
  \lfloor an\rfloor< k\leqslant \lfloor bn\rfloor+1}\frac 1{k^\alpha}
  \Big(\sum_{i=0}^{n-k}\abs{S^{(i)}_{f,k}}^p\Big)^{1/p}.
$$ 
 Then the following convergence holds:
 \begin{equation}\label{eq:example_of_convergence_of_functionals}
   Y_{n,a,b}(f)\wc  Y_{a,b}:=
  \sup_{a< t\leqslant b}t^{-\alpha}
  \Big(\int_{I_t}\abs{W(s+t)-W(s)}^p\mathrm ds\Big)^{1/p}.
 \end{equation}
\end{Proposition}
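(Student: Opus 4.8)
The plan is to recognise $Y_{n,a,b}(f)$ as the value of a fixed continuous functional of the path $W_{f,n}$, up to an error that is negligible in probability, and then to transfer the assumed weak convergence $W_{f,n}\wc W$ through the continuous mapping theorem. Define, for $x\in B^o_{p,\alpha}$,
$$
\Phi_{a,b}(x):=\sup_{a<t\leqslant b}t^{-\alpha}\prt{\int_{I_t}\abs{x(s+t)-x(s)}^p\dd s}^{1/p},
$$
so that $\Phi_{a,b}(W)=Y_{a,b}$ is precisely the limit in \eqref{eq:example_of_convergence_of_functionals}. The proof then rests on two ingredients: the continuity of $\Phi_{a,b}$, which delivers $\Phi_{a,b}(W_{f,n})\wc Y_{a,b}$, and the estimate $Y_{n,a,b}(f)-\Phi_{a,b}(W_{f,n})\pc 0$, after which Slutsky's theorem concludes.

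First I would establish that $\Phi_{a,b}$ is Lipschitz for the Besov norm. For $x,y\in B^o_{p,\alpha}$ and any $t\in(a,b]$, the reverse triangle inequality in $L^p(I_t)$ gives
$$
\abs{\prt{\int_{I_t}\abs{x(s+t)-x(s)}^p\dd s}^{1/p}-\prt{\int_{I_t}\abs{y(s+t)-y(s)}^p\dd s}^{1/p}}\leqslant \omega_p(x-y,t),
$$
whence $\abs{\Phi_{a,b}(x)-\Phi_{a,b}(y)}\leqslant \sup_{0<t\leqslant 1}t^{-\alpha}\omega_p(x-y,t)\leqslant \norm{x-y}_{p,\alpha}$. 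Thus $\Phi_{a,b}$ is continuous on all of $B^o_{p,\alpha}$, and the continuous mapping theorem applied to $W_{f,n}\wc W$ yields $\Phi_{a,b}(W_{f,n})\wc \Phi_{a,b}(W)=Y_{a,b}$. Since $W$ and each $W_{f,n}$ have continuous paths, the supremum defining $\Phi_{a,b}$ is almost surely a supremum over rational $t$, so no measurability difficulty arises.

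It remains to prove $Y_{n,a,b}(f)-\Phi_{a,b}(W_{f,n})\pc 0$. Here I would use that $W_{f,n}$ is affine on each interval $\prt{i/n,(i+1)/n}$. For $t=k/n$ the increment $s\mapsto W_{f,n}(s+k/n)-W_{f,n}(s)$ interpolates linearly between its nodal values $n^{-1/2}S^{(i)}_{f,k}$ at the points $s=i/n$; approximating it on each piece by the constant $n^{-1/2}S^{(i)}_{f,k}$ and invoking Minkowski's inequality, the error is governed by the $L^p$-norm of the nodal jumps $n^{-1/2}\prt{f\circ\TT^{i+k}-f\circ\TT^i}$. Using the identity $n^{-1/p-1/2}=n^{-\alpha}\,n^{-1/q(p,\alpha)}$, this yields, uniformly over $k\in\{\lfloor an\rfloor+1,\dots,\lfloor bn\rfloor+1\}$,
$$
\abs{\prt{\tfrac kn}^{-\alpha}\prt{\int_{I_{k/n}}\abs{W_{f,n}(s+\tfrac kn)-W_{f,n}(s)}^p\dd s}^{1/p}-n^{-1/q(p,\alpha)}k^{-\alpha}\prt{\sum_{i=0}^{n-k}\abs{S^{(i)}_{f,k}}^p}^{1/p}}\leqslant C a^{-\alpha}n^{-\alpha}\,\Xi_n,
$$
where $\Xi_n:=n^{-1/q(p,\alpha)}\prt{\sum_{j=0}^n\abs{f\circ\TT^j}^p}^{1/p}$ and $C=C(p)$. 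Taking the maximum over $k$ identifies the second term with $Y_{n,a,b}(f)$, while the passage from the continuous supremum over $t\in(a,b]$ to the maximum over the grid $t=k/n$ costs at most $a^{-\alpha}\omega_p(W_{f,n},1/n)\leqslant a^{-\alpha}n^{-\alpha}\norm{W_{f,n}}_{p,\alpha}$. Both corrections tend to $0$ in probability: $\norm{W_{f,n}}_{p,\alpha}$ is stochastically bounded because $W_{f,n}\wc W$, and $\Xi_n\pc 0$ by Lemma~\ref{lem:LLN} (the tail condition \eqref{eq:tail_condition} being forced by the assumed convergence, as in the necessity part of Theorem~\ref{thm:WIP_Besov_iid_1}); in each case multiplication by $n^{-\alpha}\to 0$ kills the contribution. (For $a=0$ one simply replaces $a^{-\alpha}$ by $n^{\alpha}$, noting $n^{\alpha}n^{-1/p-1/2}=n^{-1/q(p,\alpha)}$, so that every error is again dominated by $\Xi_n\pc 0$.)

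The main obstacle is this discretisation step. One must control, uniformly in $k$ across the whole window $(\lfloor an\rfloor,\lfloor bn\rfloor+1]$, two errors at once: the interpolation bias from replacing the integral over the affine pieces by the Riemann-type sum $\sum_i\abs{S^{(i)}_{f,k}}^p$ (governed by the nodal jumps $f\circ\TT^{i+k}-f\circ\TT^i$), and the replacement of the supremum over $t$ by the grid $\{k/n\}$. The feature that saves the argument is the spare factor $n^{-\alpha}$, which absorbs both errors against the stochastically bounded quantities $\Xi_n$ and $\norm{W_{f,n}}_{p,\alpha}$; the delicate points are the uniformity in $k$ and the harmless boundary mismatch between $k\leqslant\lfloor bn\rfloor+1$ and $t\leqslant b$, both handled precisely as in the maximal-inequality manipulations of Theorem~\ref{cor:tightness_stationary_sequences}.
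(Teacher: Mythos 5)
Your skeleton is the same as the paper's: the same functional (the paper calls it $F$), continuity plus the continuous mapping theorem, and the reduction to showing $Y_{n,a,b}(f)-\Phi_{a,b}(W_{f,n})\pc 0$; your explicit Lipschitz bound is a nice justification of the continuity that the paper merely asserts. The genuine divergence, and the genuine gap, lies in the discretisation estimate. Your displayed two-sided bound does not follow from the argument you give and is in fact false. The left-endpoint piecewise-constant approximation compares the integral only with $\bigl(\frac1n\sum_{i=0}^{n-k-1}\abs{n^{-1/2}S^{(i)}_{f,k}}^p\bigr)^{1/p}$, whereas $Y_{n,a,b}(f)$ sums up to $i=n-k$; the missing term $n^{-1/p}\abs{n^{-1/2}S^{(n-k)}_{f,k}}=n^{-1/p}\abs{W_{f,n}(1)-W_{f,n}(1-k/n)}$ is a partial-sum increment, not a nodal jump, and Minkowski gives no control of it by $\Xi_n$. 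Concretely, take $b=1$ (this is exactly the case $Y_{n,1/2,1}$ needed in the proof of Theorem~\ref{thm:counter_example_martingales}) and $k=n$, which lies in your range of indices: then $I_{1}$ is Lebesgue-null, so the integral term vanishes, while the discrete term equals $n^{-1/p}\abs{W_{f,n}(1)}$. Your inequality would then give $\abs{W_{f,n}(1)}\leqslant C a^{-\alpha} n^{1/p-\alpha}\,\Xi_n\pc 0$ (since $\alpha>1/p$ and, by your own claim, $\Xi_n\pc 0$), contradicting $W_{f,n}(1)\wc W(1)$, which holds because $x\mapsto x(1)=\lambda_1(x)$ is continuous on $B^o_{p,\alpha}$. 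The step is repairable with tools you already invoke: carry the endpoint term separately and bound its maximum over $k$ by $2a^{-\alpha}n^{-1/p}\sup_t\abs{W_{f,n}(t)}$, which is $o_P(1)$ because $B^o_{p,\alpha}\hookrightarrow C[0,1]$ for $\alpha>1/p$ and $\norm{W_{f,n}}_{p,\alpha}$ is stochastically bounded.

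The second gap is your justification of $\Xi_n\pc 0$. The necessity part of Theorem~\ref{thm:WIP_Besov_iid_1} rests on Gut's converse weak law of large numbers, which requires independence; in the Proposition $(f\circ\TT^j)_{j\geqslant 0}$ is merely stationary, so the assumed convergence does not ``force'' the tail condition \eqref{eq:tail_condition}, and Lemma~\ref{lem:LLN} cannot be fed this way. What you need does, however, follow from tightness alone: on an affine piece joining values $A$ and $B$ one has $\int_0^1\abs{A+u(B-A)}^p\,\mathrm du\geqslant \max\{\abs A,\abs B\}^p/(2(p+1))$, whence $\frac1n\sum_{j=0}^{n-2}\abs{n^{-1/2}f\circ\TT^j}^p\leqslant 2(p+1)\,\omega_p(W_{f,n},1/n)^p$, and therefore $\Xi_n\leqslant C_p\, n^{\alpha}\omega_p(W_{f,n},1/n)+o_P(1)\leqslant C_p\sup_{0<\delta\leqslant 1/n}\delta^{-\alpha}\omega_p(W_{f,n},\delta)+o_P(1)\pc 0$ by Theorem~\ref{thm:necessary_and_sufficient_condition_for_tightness}; for $a>0$ the weaker fact $n^{-\alpha}\Xi_n\pc 0$, which is all you use there, already follows from $\norm{W_{f,n}}_{p,\alpha}=O_P(1)$. (Your parenthetical for $a=0$ is too quick for the same reason: the endpoint term then carries the prefactor $n^{\alpha-1/p}\to\infty$; note the paper's own proof also tacitly assumes $a>0$, and its application has $a=1/2$.) With these two repairs your route closes, and it is arguably tidier than the paper's: the paper's comparison creates a boundary integral over $[1-(k+1)/n,\,1-k/n]$ which it must then kill with an auxiliary functional $G_N$, H\"older's inequality and the finiteness of $\norm{W}_{q,\alpha}$, and it disposes of the reverse inequality with the unproved assertion that $Z_n$ is non-negative --- precisely the direction your two-sided estimate, once corrected, supplies.
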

\begin{proof}
%
  Let use define a functional $F\colon B_{p,\alpha}^o\to \R$ by 
  \begin{equation*}
  F(x):= \sup_{a< t\leqslant b}t^{-\alpha}
  \left(\int_{I_t}\abs{x(s+t)-x(s)}^p\mathrm ds\right)^{1/p}.
  \end{equation*}
  Then $F$ is continuous with respect to the topology of 
  $B_{p,\alpha}^o$ and $F(W)=Y_{a, b}$.  We thus have $F\prt{W_n\prt f} \wc 
  Y_{a, b}$. To conclude that \eqref{eq:example_of_convergence_of_functionals} holds, it 
  suffices to prove that 
  \begin{equation*}
  Z_n:=F\prt{W_n\prt f}- Y_{n,a,b}(f)\to 0\mbox{ in probability as }n\to +\infty.
  \end{equation*}
 First note that $Z_n$ is non-negative. Second, we have 
 \begin{align*}
 F\prt{W_n\prt f}&\leqslant 
 \max_{\lfloor an\rfloor \leqslant  k\leqslant \lfloor bn\rfloor +1}\max_{k/n\leqslant t<(k+1)/n}
 t^{-\alpha}
  \left(\int_{I_t}\abs{W_n(f,s+t)-W_n(f,s)}^p\mathrm ds\right)^{1/p}\\
  &\leqslant 
 \max_{\lfloor an\rfloor \leqslant  k\leqslant \lfloor bn\rfloor +1}\max_{k/n\leqslant t<(k+1)/n}
 \left(\frac kn\right)^{-\alpha}
  \left(\int_{I_t}\abs{W_n(f,s+t)-W_n(f,s)}^p\mathrm ds\right)^{1/p}.
 \end{align*}
Let $k$ be an integer such that $\lfloor an\rfloor \leqslant  k\leqslant \lfloor bn\rfloor +1$ 
and let $t$ be a real number such that $k/n\leqslant t<(k+1)/n$. Then 
\begin{multline*}
 \abs{\left(\int_{I_t}\abs{W_n(s+t)-W_n(s)}^p\mathrm ds\right)^{1/p}-
 \left(\int_{I_{k/n}} \abs{W_n(s+k/n)-W_n(s)}^p\mathrm ds\right)^{1/p}}\\
 \leqslant 
 \left(\int_{1-t}^{1-k/n}\abs{W_n(f,s+k/n)-W_n(s)}^p\mathrm ds\right)^{1/p}
 +\left(\int_{0}^{1-t}\abs{W_n(f,s+t)-W_n(s+k/n)}^p\mathrm ds\right)^{1/p}
 \\
 \leqslant \left(\int_{1-(k+1)/n}^{1-k/n}\abs{W_n(f,s+k/n)-W_n(s)}^p\mathrm ds\right)^{1/p}
 + \omega_p\prt{W_n(f),t-k/n }  \\
=\left(\int_{1-(k+1)/n}^{1-k/n}\abs{W_n(f,s+k/n)-W_n(s)}^p\mathrm ds\right)^{1/p}
 + \omega_p\prt{W_n(f),t-k/n },
\end{multline*}
which implies that 
\begin{multline}\label{eq:intermediate_step_example_of_convergence}
 F\prt{W_n\prt f}-Y_{n,a,b}(f)
 \leqslant  \max_{\lfloor an\rfloor \leqslant  k\leqslant \lfloor bn\rfloor +1}
 \left(\frac kn\right)^{-\alpha}
 \left(\int_{1-(k+1)/n}^{1-k/n}\abs{W_n(f,s+k/n)-W_n(s)}^p\mathrm ds\right)^{1/p}\\
 +
  \max_{\lfloor an\rfloor \leqslant  k\leqslant \lfloor bn\rfloor +1}
  \left(\frac 1k\right)^{\alpha}\sup_{\abs \delta\leqslant 1/n}
  \delta^{-\alpha}\omega_p\prt{W_n(f),\delta }\\
  \leqslant  \max_{1\leqslant  k\leqslant n-1}\left(\frac nk\right)^{\alpha}
 \left(\int_{1-(k+1)/n}^{1-k/n}\abs{W_n(f,s+k/n)-W_n(s)}^p\mathrm ds\right)^{1/p}\\
 +\sup_{0<\abs \delta\leqslant 1/n}
  \delta^{-\alpha}\omega_p\prt{W_n(f),\delta }.
\end{multline}
By Theorem~\ref{thm:necessary_and_sufficient_condition_for_tightness}, the second 
term in the right hand side of \eqref{eq:intermediate_step_example_of_convergence} goes
to zero in probability. Therefore, it suffices to prove that 
\begin{equation}\label{eq:intermediate_step_example_of_convergence_goal}
 \max_{1\leqslant  k\leqslant n-1}\left(\frac nk\right)^{\alpha}
 \left(\int_{1-\frac{k+1}n}^{1-\frac kn}\abs{W_n\prt{f,s+\frac kn}-W_n(f,s)}^p
 \mathrm ds\right)^{\frac 1p} \to 
 0\mbox{ in probability as }n\to +\infty.
\end{equation}
To see this, we start from the inequality 
\begin{multline*}
 \max_{1\leqslant  k\leqslant n-1}\left(\frac nk\right)^{\alpha}
 \left(\int_{1-(k+1)/n}^{1-k/n}\abs{W_n\prt{f,s+\frac kn}-W_n(f,s)}^p\mathrm ds\right)^{1/p} \\
 \leqslant \sup_{0<t<1}t^{-\alpha}
 \left(\int_{1-t-1/n}^{1-t}\abs{W_n(f,s+t)-W_n(f,s)}^p\mathrm ds\right)^{1/p}\\
 =\sup_{0<t<1}t^{-\alpha}
 \left(\int_{1-1/n}^{1}\abs{W_n(f,s)-W_n(f,s-t)}^p\mathrm ds\right)^{1/p}.
\end{multline*}
Let $N$ be a fixed integer. The functional 
\begin{equation*}
 G_N\colon B_{p,\alpha}^o\to \R, \quad 
 G_N(x)=\sup_{0<t<1}t^{-\alpha}
 \left(\int_{[1-1/N,1]\cap I_{-t}}\abs{x(s)-x(s-t)}^p\mathrm ds\right)^{1/p}
\end{equation*}
is continuous. Therefore, if $\varepsilon>0$ is a continuity point of 
the cumulative distribution function of $G_N(W)$ for each $N$, 
we have
\begin{multline}\label{eq:intermediate_step_example_of_convergence2}
\limsup_{n\to +\infty} \PP  
 \Big(\max_{1\leqslant  k\leqslant n-1}\left(\frac nk\right)^{\alpha}
 \left(\int_{1-(k+1)/n}^{1-k/n}\abs{W_n(f,s+k/n)-W_n(f,s)}^p\mathrm ds\right)^{1/p}>\varepsilon 
 \Big)\\
 \leqslant 
 \PP\Big( \sup_{0<t<1}t^{-\alpha}
 \Big(\int_{[1-1/N,1]\cap I_{-t}}\abs{W(s)-W(s-t)}^p\mathrm ds\Big)^{1/p}
 >\varepsilon \Big).
\end{multline}
Now, take $q>p$. Using H\"older's inequality 
with the exponents $q/p$ and $q/(q-p)$, we have 
\begin{multline*}
 \sup_{0<t<1}t^{-\alpha}
 \left(\int_{[1-1/N,1]\cap I_{-t}}\abs{W(s)-W(s-t)}^p\mathrm ds\right)^{1/p}\\
 \leqslant \sup_{0<t<1}t^{-\alpha}
 \left(\int_{I_{-t}}\abs{W(s)-W(s-t)}^q\mathrm ds\right)^{1/q}N^{-(p-q)/q} 
 \leqslant \lVert W\rVert_{q,\alpha}N^{-(p-q)/q}.
\end{multline*}
Since $\lVert W\rVert_{q,\alpha}$ is almost surely finite, we get 
from \eqref{eq:intermediate_step_example_of_convergence2} that 
\eqref{eq:intermediate_step_example_of_convergence_goal} holds.
\end{proof}
 
Statistics based on $k$-scan processes can be used to detect epidemic change in the 
mean of a sample of size $n$ (see, e.g., \cite{R-S:2004}, and reference therein).
More precisely, given a sample $X_1, X_2, \dots, X_n$, consider the model 
$$
X_i=\mu\bm{1}_{(k^*, m^*]}(i)+ \e_i, \quad i=1, \dots, n,
$$
where $(\e_i)$ is a stationary  sequence, $\mu\not=0$ and $k^*, m^*$ are unknown parameters of the model. We want to test the null hypothesis $H_0:\ \ \mu=0$ against the alternative $\mu\not=0$. To this aim one can consider the statistics 
$$
T_{n}=n^{-1/{q(p,\alpha)}}\max_{0< k\leqslant n}\frac 1{k^\alpha}   \Big(\sum_{i=0}^{n-k}\abs{X_i+\cdots +X_{i+k}}^p\Big)^{1/p}.
$$
Under null its limit is defined by Proposition \ref{prop:example_of_functional} provided $(\e_i)$ satisfies the weak invariance principle in the Besov space $B^o_{p, \alpha}$.  Under alternative then we see that
$$
T_n=\Big(1-\frac{h^*}{n}\Big)^{1/p}\Big(\frac{h^*}{n}\Big)^{1-\alpha}n^{1/2}+O_P(1)
$$
as $n\to\infty$, where $h^*=m^*-k^*$ is the duration  of the epidemic state. 
\def\polhk\#1{\setbox0=\hbox{\#1}{{\o}oalign{\hidewidth
  \lower1.5ex\hbox{`}\hidewidth\crcr\unhbox0}}}


\end{document}